\newtheorem{thm}{Theorem}[section] 
\newtheorem{theorem}[thm]{Theorem}
\newtheorem{lem}[thm]{Lemma} 
\newtheorem{lemma}[thm]{Lemma} 
\newtheorem{cor}[thm]{Corollary}
\newtheorem{definition}{Definition}
 \theoremstyle{remark}
\newtheorem*{remark*}{Remark} \newtheorem{remark}{Remark}
\renewcommand{\leq}{\leqslant} \renewcommand{\le}{\leq}
\renewcommand{\geq}{\geqslant} \renewcommand{\ge}{\geq}
\numberwithin{equation}{section}
\DeclareMathOperator{\dists}{dist}
\DeclareMathOperator{\diam}{diam}
\def\({\left(} 
\def\){\right)} 
\def \EE {\mathbb{E}} 
\def \PP {\mathbb{P}} 
\def \RR {\mathbb{R}} 
\def \L {{\cal L}}
\def \Rd {{\RR^d}} 
\def \cjj {C^{1,1}}
\def \eps {\varepsilon}
\def \CII {C_4} 
\def \CIV {C_5} 
\def \CXX {C_{1}}
\def \CIIId {C_3} 
\def \CIVd {C_2} 
\def \CVd {C_6}
\newcommand{\dist}[1]{ \dists ( #1 ) }
\newcommand*\pd[1]{\mathop{}\!\mathrm{d}{#1}}
\newcommand{\R}{\mathbb{R}}
\newcommand{\N}{\mathbb{N}} 
\newcommand{\E}{\mathbb{E}}
\newcommand{\WLSC}[3]{\textrm{\rm WLSC}(#1,#2,#3)}
\newcommand{\lC}{{c}}
\newcommand{\la}{{\alpha}}
\newcommand{\lt}{{\theta}}
\newcommand{\dex}{\delta_x}
\def \Y{Y}
\def \ss{M}
\def \U{\mathcal{U}}
\title{
	Estimates of gradient of $\mathcal{L}$-harmonic functions for  nonlocal operators with order $\alpha>1$. 
	\footnotetext{
		\emph{2000 Mathematics Subject Classification}: 
		Primary 47G20, 31B25; Secondary 31B25.} \footnotetext{\emph{Key words and phrases}: L\'evy operator, harmonic function, Green function, Poisson kernel
	}
}
\author{
	Tomasz Grzywny\thanks{The first author was supported by the grant 2017/27/B/ST1/0133 of National Science Centre, Poland.}, 
	Tomasz Jakubowski\thanks{The second and the third author were partially supported by the grant 2015/18/E/ST1/00239 of National Science Centre, Poland.}
		\,and 	
	Grzegorz \.{Z}urek
 }
\date{}
\begin{document}

\maketitle

\begin{abstract}
We obtain Gr\"onwall type estimates for the gradient of the harmonic functions for a L\'evy operator  with order strictly larger than 1 and  minimal assumptions of its L\'evy measure. 
\end{abstract}

\section{Introduction}

We consider a L\'{e}vy operator on $\R^d$, $d\in\N$,
\begin{equation}\label{generator}
{\cal L} f(x)= \int_{\Rd} \left(f(x+z)-f(x)
-{\bf 1}_{|z|<1}(z\cdot \nabla f(x))\right)\nu(z)\pd z\,, \quad f\in C^2_b(\R^d)\,,
\end{equation}
where $\nu(z)dz$ is a L\'{e}vy measure, i.e. $\nu(z)\geq 0$ and $\int(1\wedge|z|^2)\nu(z)\pd z<\infty$.
We assume that $\int_\R\nu(z)\pd z=\infty$, $\nu$ is symmetric and comparable with radial and nonincreasing function. That is, there exist a nonincreasing function $g:(0,\infty)\mapsto [0,\infty)$ and a constant $c>0$ such that
\begin{align}\label{eq:gcompnu}
c^{-1}g(|x|)	\leq \nu(x) \leq c g(|x|), \qquad x\in\R^d \setminus \{0\}.
\end{align}
Following \cite{MR632968}, we define
$$
	h(r)=\int_\Rd \(1\wedge \frac{|x|^2}{r^2}\right)g(|x|)\pd{x}, \qquad r>0\,.
$$
We focus on the operator with order strictly larger than 1 in the sense of the  lower weak scaling conditions for its Fourier multiplier
$$
\psi(x) = \int_{\RR^d}\left (1 -\cos ( x\cdot  z )\right) \nu(dz), \,\quad x\in\Rd.
$$
One of the primary example of a mentioned class of operators is  the fractional Laplacian $\Delta^{\alpha/2}$, for $\alpha>1$.

We note that there exists a nonincreasing function $g^*$ such that $g(r) \le g^*(r)$ and $g^*(r+1) \approx g^*(r)$, $r>r_0$ for some $r_0>2$. We introduce this function because we will need the majorant of $\nu$, which does not decrease very fast. Generally the function $g$ may not satisfy this property, e.g. $g(r) = e^{-r^2}r^{-d-\alpha}$, with $\alpha\in(0,2)$. Let 
$$\Y=\{u\in L^1_{loc}(\Rd): u\in L^1(\Rd,1\wedge g^*(|x|)dx)\}$$
and denote $\|u\|_\Y = \|u\|_{L^1(\Rd,1\wedge g^*(|x|)dx)}$. 
\begin{definition}
We say that $u \in \Y$ is $\L$-harmonic in $D$ if
$$(u,{\cal L} \varphi)=0,\quad \varphi\in C^\infty_c(D).$$ 
\end{definition}

We note that if $g(r)=0$ for $r>r_0$, for $\alpha$ sufficiently small,  $\L$-harmonic functions may not be differentiable (see e.g. \cite[Example 7.5]{MR3413864}) and do not satisfy Harnack inequality (see e.g. \cite[Example 5.5]{TGMK2018}). In the first example $\alpha<1/2$, while in the second one $\alpha=0$.
We say that a function $f$ satisfies the doubling condition if there is a constant $c$ such that
\begin{align}\label{con:doubling}
	c^{-1}f(2r) \le  f(r) \le c f(2r), \qquad r\in(0,1].
\end{align}

Our main result is the following theorem 
\begin{theorem}\label{Theorem1}
	Let $d \in \N$, $\alpha \in (1,2)$ and $D\subset \Rd$ be an open set. Let $\psi \in \WLSC{\alpha}{\theta}{c_\alpha}$, where $\theta = 0$ for $d=1$ and $\theta=1$ for $d \ge2$.  Furthermore, we assume that $g$ satisfies doubling condition \eqref{con:doubling}.
	Let $f\in Y$ be a non-negative function, $\L$-harmonic in $D$. There exists a constant $C= C(d,\psi)$ such that
	\begin{equation}\label{eq:egf_t2}
		|\nabla_x f(x)| \leq C \frac{f(x)}{\delta_x^D \land 1}, \qquad x\in D,
	\end{equation}
\end{theorem}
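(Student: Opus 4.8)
The plan is to localise the estimate to a ball, pass to smooth harmonic functions by mollification, use the Harnack inequality to replace the pointwise value $f(x)$ by a local supremum, reduce matters to a gradient bound for the Poisson kernel of a ball about its centre, and prove that bound by a self-improvement across scales -- the Gr\"onwall step -- whose kernel is integrable precisely because $\alpha>1$.

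\medskip
\noindent\textbf{Step 1 (reductions).} By translation take $x=0$ and set $r=\delta_0^D\wedge 1$, so that $B:=B(0,r)\subset D$ (replacing $D$ by $B(0,1)$ when $\delta_0^D>1$); it suffices to show $|\nabla f(0)|\le C f(0)/r$. Mollifying, $f_\eps=f*\varrho_\eps$ is smooth, nonnegative, belongs to $\Y$, and is $\L$-harmonic on $\{x:\delta_x^D>\eps\}\supset B(0,r/2)$ for $\eps<r/2$, with $f_\eps\to f$ in $\Y$ and, by the known regularity of $\L$-harmonic functions under our hypotheses, locally uniformly. Hence it is enough to prove the bound for smooth nonnegative $f$ with a constant independent of $\eps$ and then let $\eps\to 0$; the resulting uniform Lipschitz bound, together with the interior regularity theory for operators of order $\alpha>1$, also shows that $f$ is $C^1$, so the pointwise statement is meaningful. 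Finally, since $f\ge0$ on all of $\Rd$, the Harnack inequality -- which holds with a constant depending only on $d$ and $\psi$ exactly under $\WLSC{\alpha}{\theta}{c_\alpha}$ with the stated $\theta$ and the doubling \eqref{con:doubling} of $g$ -- gives $\sup_{B(0,r/2)}f\le C f(0)$, so it remains to bound $|\nabla f(0)|$ by $\tfrac{C}{r}\sup_{B(0,r/2)}f$.

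\medskip
\noindent\textbf{Step 2 (potential representation).} For smooth nonnegative $\L$-harmonic $f$ and $B'=B(0,r/2)$, the exit-time (Ikeda--Watanabe) argument -- legitimate because $f\ge0$, $f\in\Y$ and the $g^*$-weight defining $\Y$ dominates a tail majorant of $\nu$ -- gives $f=P_{B'}\!\left[f\mathbf 1_{(B')^c}\right]$ on $B'$, where $P_{B'}(x,z)=\int_{B'}G_{B'}(x,y)\,\nu(z-y)\pd y$ for $z\notin\overline{B'}$. Differentiating the representation,
\begin{equation*}
	\nabla f(0)=\int_{(B')^c}\nabla_x P_{B'}(0,z)\,f(z)\pd z ,
\end{equation*}
so the whole matter reduces to the pointwise estimate $|\nabla_x P_{B'}(0,z)|\le \tfrac{C}{r}\,P_{B'}(0,z)$ for $z\notin\overline{B'}$, with $C=C(d,\psi)$: granting it, $|\nabla f(0)|\le\tfrac{C}{r}\int_{(B')^c}P_{B'}(0,z)f(z)\pd z=\tfrac{C}{r}f(0)$.

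\medskip
\noindent\textbf{Step 3 (gradient of the Poisson kernel; the Gr\"onwall iteration).} Writing $\nabla_x P_{B'}(0,z)=\int_{B'}\nabla_x G_{B'}(0,y)\,\nu(z-y)\pd y$ and using the symmetry of $\nu$ -- under which $B'$ is invariant by $y\mapsto-y$ and $\nabla_x G_{B'}(0,-y)=-\nabla_x G_{B'}(0,y)$ -- the leading, ``frozen-coefficient'' part $\nu(z)\int_{B'}\nabla_x G_{B'}(0,y)\pd y$ vanishes, and what remains is controlled, via the Green-function estimates of balls available under $\WLSC{\alpha}{\theta}{c_\alpha}$ and \eqref{con:doubling} and -- crucially -- the local integrability of $\nabla_x G_{B'}(0,\cdot)$ near the pole (which holds iff $\alpha>1$; for $\alpha\le1$ it fails, consistently with the failure of \eqref{eq:egf_t2}), in terms of a fraction of $r^{-1}P_{B'}(0,z)$ plus a contribution governed by the gradient of the $\L$-harmonic function $G_{B'}(\cdot,y)$ -- i.e.\ by the statement being proved -- at a finer scale. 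To close the circle one sets, for $\rho\le r$, $u(\rho)$ to be the least constant with $\delta_x^{B_\rho}|\nabla v(x)|\le u(\rho)\,v(x)$ for every ball $B_\rho\subset B$ of radius $\rho$, every nonnegative $\L$-harmonic $v$ on $B_\rho$ and every $x\in B_\rho$; running the decomposition above at all scales and using Harnack to compare values of $v$, one arrives at a Gr\"onwall-type inequality
\begin{equation*}
	u(\rho)\le C_0+C_1\int_\rho^{r}\Bigl(\tfrac{\rho}{s}\Bigr)^{\alpha-1}\,\frac{u(s)}{s}\pd s ,
\end{equation*}
whose kernel is integrable over scales precisely when $\alpha>1$. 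Gr\"onwall's lemma then yields $u(\rho)\le C$, hence the Poisson-kernel bound of Step~2, and unwinding Steps~1--2 completes the proof. The main obstacle is exactly this step: deriving the scale-invariant inequality with the factor $(\rho/s)^{\alpha-1}$ from the kernel estimates under only the minimal hypotheses \eqref{eq:gcompnu}, \eqref{con:doubling} and $\WLSC{\alpha}{\theta}{c_\alpha}$, in particular circumventing the mere comparability (rather than smoothness) of $\nu$, and it is here that $\alpha>1$ is used quantitatively, making both the near-pole integral of $\nabla G_{B'}$ and the Gr\"onwall kernel finite.
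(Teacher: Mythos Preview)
Your outline has two genuine gaps, and the second one is structural.

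First, in Step~1 and Step~3 you invoke the Harnack inequality as an input. Under the paper's hypotheses --- $\nu$ merely \emph{comparable} to a radial nonincreasing function, not itself unimodal, and only the scaling and doubling conditions stated --- a scale-invariant Harnack inequality is not assumed to be available in advance. In fact the paper is explicit that Harnack (Corollary~1.2) is obtained \emph{as a consequence} of the gradient estimate via a genuine Gr\"onwall argument, not used to prove it. If you want to run your scheme you must either supply an independent proof of Harnack under exactly these hypotheses, or remove it from the argument; as written the logic is circular.

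Second, Step~3 is not a proof but a description of a proof you hope exists. You define $u(\rho)$, assert a recursive inequality with kernel $(\rho/s)^{\alpha-1}$, and then say ``the main obstacle is exactly this step''. That is the entire content of the theorem: producing a closed estimate for $\int_{B'}\nabla_x G_{B'}(0,y)\,\nu(z-y)\,dy$ in terms of $P_{B'}(0,z)$ without already knowing the gradient bound for harmonic functions at nearby scales. Your symmetry cancellation kills the constant term $\nu(z)\int\nabla G$, but what remains is an integral of $|\nabla_x G_{B'}(0,y)|$ against $|\nu(z-y)-\nu(z)|$, and controlling this pointwise in $z$ by $r^{-1}P_{B'}(0,z)$ is precisely where the difficulty lies when $\nu$ has no regularity beyond comparability to $g$.

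The paper avoids both issues. It never aims at the pointwise Poisson bound $|\nabla_xP_{B'}(0,z)|\le Cr^{-1}P_{B'}(0,z)$; instead it proves the \emph{integrated} estimate
\[
\int_{B_r}|\nabla_x G_{B_r}(0,y)|\int_{B_r^c} f(w)\,\nu(w-y)\,dw\,dy \le \frac{C}{r}f(0)
\]
directly. The inner region $|y|\le\kappa r$ is handled by an $L^1$ bound $\int_{B_\rho}|\nabla_x G_{B_\rho}(0,y)|\,dy\le C V^2(\rho)/\rho$ (which uses $\alpha>1$) together with a geometric cone construction that, for each $w$ with $|w|\ge 2r$, manufactures a region $\gamma_1^w\subset B_{\kappa\rho}$ on which $\nu(w-y)$ is at least as large as on $B_{\kappa r}$, so the $|\nabla G|$ integral can be traded for a $G$ integral and absorbed back into the Ikeda--Watanabe representation of $f(0)$. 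The outer annulus $\kappa r<|y|<r$ is then reduced to the inner region using that $\nabla_x G_{B_r}(0,\cdot)$ itself satisfies MVP in $B_r\setminus\{0\}$. No Harnack, no iteration across scales, no pointwise Poisson estimate.
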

Here $\delta_x^D$ is the distance of $x$ to the boundary of $D$. For the definitions of $\WLSC{\alpha}{\theta}{c}$ we refer to the Section 2, Definition \ref{def:wlsc}.
\begin{remark}
The condition  $\psi \in \WLSC{\alpha}{\theta}{c_\alpha}$ in Theorem \ref{Theorem1}  means that $h(r)/r^\alpha$ is almost decreasing on $(0,1/\theta)$, i.e. 
$h(\lambda r)\ge c_\alpha \lambda^{\,\la} h(r)$ for $\lambda\le 1$ and  $0<r<1/\lt$ (see \eqref{eq:psi_h} and \eqref{eq:LSC}). 
\end{remark}
\begin{remark} If $\psi \in \WLSC{\alpha}{0}{c_\alpha}$, then \eqref{eq:egf_t2} may be strengthen to $|\nabla_x f(x)| \leq C f(x)/\delta_x^D$.  
\end{remark}

In fact, in Theorem \ref{TheoremMVP} we show \eqref{eq:egf_t2} for the larger class of functions satisfying the mean value property (abbreviated MVP, see Definition \ref{def:MVP}). In Appendix we present some relations between $\L$-harmonic and MVP functions, in particular for $f \in \Y$ both definitions are equivalent.

The assertion of Theorem \ref{TheoremMVP} is known for harmonic functions with respect to several types of operators and  under various assumptions, see e.g. \cite{MR1683048, MR1936936, Kulczycki2012,  MR1110161}.  
In the  paper \cite{MR3413864}, Kulczycki and Ryznar proved \eqref{eq:egf_t2} under certain additional assumptions on the density of the L\'evy measure. They assumed that $\nu(x)$ is positive and radial, with an absolutely continuous and decreasing profile $g$ such that $-g'(r)/r$ is nonincreasing. Furthermore $g$ had the doubling property \eqref{con:doubling} and $g(r)\approx g(r+1)$ for large $r$. These  assumptions are satisfied, e.g.,  for a large class of subordinated Brownian motions. In our paper $\nu$ is only comparable with the decreasing function which is bounded by $g^*$ possessing similar properties to the function $g$ from \cite{MR3413864}.

A very natural approach to prove estimates \eqref{eq:egf_t2} is to use the formula \eqref{eq:hvPoiss} and differentiate the Poisson kernel $P_{B(0,r)}(x,z)$ under the integral. If the exact form of the Poisson kernel is known, \eqref{eq:egf_t2} follows easily from the estimates of $\nabla_x P_{B(0,r)}(x,z)$ (see \cite{MR1936936}). However, it is rather a rare situation and usually we do not know the formula for the Poisson kernel. Another method uses a construction of the difference process developed in \cite{MR3413864}. With the help of this process the authors get the estimates of the Green function of the ball in the form \eqref{eq:thmgradG}. By using these estimates together with the Harnack inequality they finally prove \eqref{eq:egf_t2}. We note that the unimodality of the L\'evy measure is crucial in this approach.

One of the tools used  both in our approach and in \cite{MR3413864} are the estimates of the gradient of the fundamental solution $p_t(x)$ of the operator $\L$. They are needed to get the proper bounds for the potential kernel of $\L$. If $\L = -\varphi(-\Delta)$, where $\varphi$ is a Bernstein function (i.e. $\L$ is a generator of the subordinated Brownian motion) such estimates are given by the transference property expressing $\nabla p_t(x)$ in terms of the transition density of the process in dimension $d+2$ (see e.g. \cite{MR2283957}). This transference property was generalized in \cite{MR3413864} to the large class of L\'evy process and was used in the construction of the mentioned difference process. We refer also  to \cite{KK.PS.2013} and  \cite{10.3150/11-BEJ375} for other developments in this direction. In our paper, we use the estimates of $\nabla p_t(x)$ from \cite{TGKS2021} proven for pure jump L\'evy process with $\nu$ satisfying \eqref{eq:gcompnu} and $\psi \in \WLSC{\alpha}{\theta}{c}$.

We emphasize that in our approach we do not need the estimates of the Green function near the boundary of the domain as in \cite{MR3413864}. The main technical tool we use in the proof of Theorem \ref{TheoremMVP} are the $L^1$-type estimates of the gradient of the Green function of the ball obtained in Lemma \ref{lem:partg}. In order to get Lemma \ref{lem:partg}, we only need the estimates of the gradient of the fundamental solution (see the proof of Lemma \ref{lem:gradP}), which yield some preliminary estimates of $G_{B(0,r)}$ and $\nabla G_{B(0,r)}$ given in Lemmas \ref{lem:upperestgrG} and \ref{lem:lowerestG}.

Let us note that in \cite{MR3413864} Corollaries \ref{cor:Harnack} and \ref{cor:GreenEst} stated below were important tools in the proof of \eqref{eq:egf_t2}, while in our approach they are simple consequences of Theorem \ref{TheoremMVP}.
Namely, by the Gr\"onwall lemma and Theorem \ref{TheoremMVP} we get the scale invariant Harnack inequality. 
\begin{cor}\label{cor:Harnack}
Let $d \in \N$, $\alpha \in (1,2)$ and $D\subset \Rd$ be an open set. Let $\psi \in \WLSC{\alpha}{\theta}{c_\alpha}$, where $\theta = 0$ for $d=1$ and $\theta=1$ for $d \ge2$.  Furthermore, we assume that $g$ satisfies doubling condition \eqref{con:doubling}. There exists a constant $C$ such that for any $x_0\in\RR^d$, $r\in(0,1]$, and any function $f$ nonnegative on $\RR^d$ and satisfying MVP in a ball $B(x_0, r)$,
\begin{align*}
\sup_{x\in B(x_0,r/2)} f(x) \le C \inf_{x\in B(x_0,r/2)} f(x).
\end{align*}
\end{cor}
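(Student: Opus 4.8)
The plan is to obtain the Harnack inequality directly from the gradient estimate of Theorem \ref{TheoremMVP} applied on the ball $D=B(x_0,r)$, combined with the convexity of balls and an elementary Gr\"onwall argument; neither a boundary estimate of the Green function nor a minimum principle is needed.

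Fix $x_0\in\RR^d$, $r\in(0,1]$, and a function $f$ nonnegative on $\RR^d$ and satisfying MVP in $B(x_0,r)$. By Theorem \ref{TheoremMVP} with $D=B(x_0,r)$, $f$ is differentiable on $B(x_0,r)$ and $|\nabla f(x)|\le C\,f(x)/(\delta_x^{B(x_0,r)}\wedge 1)$ there, with $C=C(d,\psi)$. For $x\in B(x_0,r/2)$ one has $\delta_x^{B(x_0,r)}=r-|x-x_0|>r/2$, and since $r\le 1$ this gives $\delta_x^{B(x_0,r)}\wedge 1=\delta_x^{B(x_0,r)}>r/2$; hence
$$
|\nabla f(x)|\le \frac{2C}{r}\,f(x),\qquad x\in B(x_0,r/2).
$$

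Now take arbitrary $x,y\in B(x_0,r/2)$ and set $\gamma(t)=x+t(y-x)$ for $t\in[0,1]$. By convexity $\gamma([0,1])\subset B(x_0,r/2)$, and $|y-x|<r$. The function $\phi(t)=f(\gamma(t))$ is differentiable on $[0,1]$ with $\phi'(t)=\nabla f(\gamma(t))\cdot(y-x)$, so
$$
\phi'(t)\le |\nabla f(\gamma(t))|\,|y-x|\le \frac{2C}{r}\,\phi(t)\cdot r=2C\,\phi(t),\qquad t\in[0,1].
$$
Therefore $t\mapsto e^{-2Ct}\phi(t)$ has nonpositive derivative on $[0,1]$, hence is nonincreasing, and $f(y)=\phi(1)\le e^{2C}\phi(0)=e^{2C}f(x)$. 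Since $x,y\in B(x_0,r/2)$ were arbitrary, $\sup_{B(x_0,r/2)}f\le e^{2C}\inf_{B(x_0,r/2)}f$, which proves the claim with a constant depending only on $d$ and $\psi$.

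The argument is short once Theorem \ref{TheoremMVP} is available; the only points to watch are that the estimate must be invoked on the full ball $B(x_0,r)$ (so that the distance $\delta^{B(x_0,r)}_x$, rather than $\delta^{B(x_0,r/2)}_x$, controls the gradient on $B(x_0,r/2)$), and that phrasing the final step through the differential inequality $\phi'\le 2C\phi$ for the nonnegative $\phi$ --- rather than through $\nabla\log f$ --- handles for free the degenerate case in which $f$ vanishes somewhere in $B(x_0,r/2)$, where one then concludes $f\equiv 0$ there.
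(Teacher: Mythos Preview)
Your proof is correct and follows exactly the approach the paper indicates: the paper does not spell out the argument but simply states that Corollary~\ref{cor:Harnack} follows ``by the Gr\"onwall lemma and Theorem~\ref{TheoremMVP}'', which is precisely the gradient bound on $B(x_0,r)$ combined with the line-segment Gr\"onwall estimate you carry out. One very minor point you might add for completeness is that MVP \emph{in} $B(x_0,r)$ implies MVP \emph{inside} $B(x_0,r)$ (by the strong Markov property, trivially for nonnegative $f$), which is the hypothesis Theorem~\ref{TheoremMVP} formally requires.
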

Furthermore, since the  Green function $G_D(\cdot,y)$ satisfies MVP property inside $D\setminus\{y\}$, Theorem \ref{TheoremMVP} yields the following estimates for the gradient of the Green function.
\begin{cor}\label{cor:GreenEst}
Suppose the assumptions of Corollary \ref{cor:Harnack} holds. If $\int_\Rd \frac{1}{\psi(\xi)} \pd{\xi} = \infty$, we additionally assume that $D$ is bounded. Then, there exists a constant $C$
such that
	\begin{align}\label{eq:thmgradG}
|\nabla_x G_D(x,y)| \le C \frac{G_D(x,y)}{\dex^D \land |x-y|\land 1}, \qquad x,y \in D.
\end{align}
\end{cor}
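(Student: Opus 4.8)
The goal is to prove Corollary~\ref{cor:GreenEst} from Theorem~\ref{TheoremMVP} (equivalently Theorem~\ref{Theorem1}), and the plan is to simply feed the Green function into the gradient estimate for MVP functions.

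First I would fix $y\in D$ and set $f(\cdot)=G_D(\cdot,y)$. The key observation, already flagged in the excerpt, is that $x\mapsto G_D(x,y)$ is nonnegative and satisfies the MVP in $D\setminus\{y\}$: for any ball $B=B(x_0,r)$ with $\overline{B}\subset D\setminus\{y\}$ one has $G_D(x_0,y)=\E_{x_0}G_D(X_{\tau_B},y)$, which is exactly the mean value property of Definition~\ref{def:MVP}. One should also check the integrability needed to apply Theorem~\ref{TheoremMVP} locally, i.e. that $G_D(\cdot,y)\in\Y$ on a neighborhood of the point $x$ under consideration; away from $y$ and from $\partial D$ this is standard, and the Green function is locally bounded there, so the hypotheses of Theorem~\ref{TheoremMVP} are met on the open set $D\setminus\{y\}$. (The extra assumption that $D$ be bounded when $\int_{\Rd}1/\psi<\infty$ is imposed precisely so that $G_D$ is finite and well-behaved — transient vs.\ recurrent dichotomy — so that the object we differentiate actually exists.)

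Next I would apply Theorem~\ref{TheoremMVP} to $f=G_D(\cdot,y)$ on the open set $D'=D\setminus\{y\}$. This gives, for every $x\in D\setminus\{y\}$,
\begin{equation*}
|\nabla_x G_D(x,y)|\le C\,\frac{G_D(x,y)}{\delta_x^{D'}\land 1},
\end{equation*}
with $C=C(d,\psi)$. The only remaining point is to identify $\delta_x^{D'}$, the distance from $x$ to $\partial(D\setminus\{y\})=\partial D\cup\{y\}$. Plainly
\begin{equation*}
\delta_x^{D'}=\dists(x,\partial D\cup\{y\})=\min\bigl(\delta_x^D,\,|x-y|\bigr),
\end{equation*}
so $\delta_x^{D'}\land 1=\delta_x^D\land|x-y|\land 1$, which yields exactly \eqref{eq:thmgradG} for $x\ne y$. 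At $x=y$ the right-hand side denominator vanishes and the estimate is vacuous (or $G_D(x,x)=\infty$), so the stated range $x,y\in D$ is fine.

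There is essentially no hard step here — the content is entirely in Theorem~\ref{TheoremMVP}; the main thing to be careful about is the bookkeeping: verifying the MVP for $G_D(\cdot,y)$ (a standard strong-Markov computation with the exit time $\tau_B$), checking the local membership in $\Y$ so the theorem genuinely applies on $D\setminus\{y\}$, and correctly computing the distance to the punctured domain's boundary. The boundedness hypothesis in the recurrent case is the one subtlety worth a sentence, since without it $G_D$ need not be finite and the statement would be empty of content.
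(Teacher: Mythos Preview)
Your proposal is correct and follows exactly the route the paper indicates: apply Theorem~\ref{TheoremMVP} to the nonnegative MVP function $f=G_D(\cdot,y)$ on the open set $D\setminus\{y\}$ and then identify $\delta_x^{D\setminus\{y\}}=\delta_x^D\land|x-y|$. Two minor remarks: the boundedness of $D$ is imposed when $\int_{\Rd}1/\psi(\xi)\,\mathrm{d}\xi=\infty$ (the recurrent case), not $<\infty$ as you wrote; and Theorem~\ref{TheoremMVP} (unlike Theorem~\ref{Theorem1}) does not require membership in $\Y$, so your aside about verifying $G_D(\cdot,y)\in\Y$ is unnecessary.
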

\noindent  We note that if $\int_\Rd \frac{1}{\psi(\xi)} \pd{\xi} = \infty$ and $D$ is unbounded, the Green function may not exist.

The paper is organized as follows. In Section 2, we provide the necessary definitions and prove auxiliary results on the Green function. In Section 3 we state and prove Theorem \ref{TheoremMVP}. In Appendix we show some relations between functions satisfying MVP property and $\L$-harmonic functions, which yield Theorem \ref{Theorem1}.

\section{Preliminaries}

\subsection{Notation}
In what follows, $\Rd$ denotes the Euclidean space of real numbers, $\pd{x}$ stands for the Lebesgue measure on $\Rd$. 
Without further mention we will only consider Borel sets, measures and functions in $\Rd$. 
As usual, we write $a \land b = \min(a,b)$ and $a \vee b = \max(a,b)$. For $r > 0$, 
we let $B(x,r)=\{y\in \Rd \colon |x-y|<r\}$. We denote $B_r = B(0, r)$. For the arbitrary set $A\subset \RR$, the distance to the boundary of $A$ will be denoted by
$$\delta_x^A =\dist{x,\partial A}.$$
To simplify the notation, while referring to the set $D$, we will omit the superscript, i.e.,  
$$\dex = \dex^D = \dist{x,\partial D}.$$
When we write $f (x) \approx g(x)$, we mean that there is a number $0 < C < \infty$ independent of $x$, i.e. a constant, such that for every $x$ we have $C^{-1} f (x) \le g(x) \le C f (x)$. 
The notation $C = C(a, b, \ldots, c)$ means that $C$ is a constant which depends only on $a, b, \ldots , c$.
We use a convention that constants denoted by capital letters do not change throughout the paper. 
For a radial function  $f:\Rd\rightarrow [0,\infty)$ we shall often write $f(r)=f(x)$ for any $x \in \Rd$ with $|x| = r$.

\subsection{Fundamental solution for $\L$}\label{sec:wsc}
We define 
$$
	K(r) = \frac{1}{r^2}\int_{|x|<r}|x|^2g(|x|)\pd{x},\qquad r>0\, .
$$
Clearly $K(r) \le h(r)$. Let us notice that
$$
	h(\lambda r)\leq h(r)\leq \lambda^2  h(\lambda r), \quad \lambda>1.
$$
We define the function $V$ as follows,	
$$V(0)=0 \,\,\,\mathrm{and}\,\,\, V(r)=1/\sqrt{h(r)}, \quad r>0.$$
Since $h(r)$ is non-increasing to 0, $V$ is non-decreasing and unbounded. We have
\begin{equation}\label{subadd} 
V(r)\leq V(\lambda r)\leq \lambda V(r),\quad r\geq 0 ,\,\lambda >1. \qquad 
\end{equation}
Let $\psi^*(r)=\sup_{|x|\leq r}\psi(x)$. Since $g$ in \eqref{eq:gcompnu} is nonincreasing, by \cite[Lemma 1 and (6)]{MR3165234}, for certain $\CXX$
\begin{equation}\label{eq:psi_h}
	2^{-1}\psi(\xi)\leq 2^{-1}\psi^*(|\xi|)\leq  h(1/|\xi|)\leq \CXX \psi^*(|\xi|)\leq \pi^2\CXX \psi(\xi),\quad \xi\in\Rd.
\end{equation}
\begin{definition}\label{def:wlsc}
Let $\lt\in [0,\infty)$ and
 $\phi$ be a non-negative non-zero  function on $(0,\infty)$.
We say that
$\phi$ satisfies {the} {\it \bfseries weak lower scaling condition} (at infinity) if there are numbers
$\la>0$ and  $\lC \in(0,1]$  such that
\begin{equation}\label{eq:LSC}
 \phi(\lambda r)\ge
\lC\lambda^{\,\la} \phi(r)\quad \mbox{for}\quad \lambda\ge 1, \quad r>\lt.
\end{equation}
In short, we say that $\phi$ satisfies WLSC($\la, \lt,{\lC}$) and write $\phi\in\WLSC{\la}{ \lt}{\lC}$.
If $\phi\in\WLSC{\la}{0}{\lC}$, then we say
that $\phi$ satisfies the {\bfseries  \emph{global} weak lower scaling condition}.
\end{definition}

\noindent Notice that if $\psi^*\in \WLSC{\beta}{\theta}{C}$  there is a constant  $c_1 = c_1(C, \CXX)$ 
such, that
\begin{equation}\label{scalV2}
	\frac{V(\eta r)}{V(r)} \leq c_1 \eta^{\beta / 2}
\end{equation}
for $r < 1/\theta$ and $\eta < 1$.
Moreover, if $\theta=0$ 
\begin{equation}\label{scall}
	\frac{V(\lambda r)}{V(r)}\geq c_1^{-1}\lambda^{\beta/2},\quad r> 0,\lambda >1.
\end{equation}

By similar argumentation as in \cite[Remark 4]{MR3165234} in regard to \eqref{scalV2}, we have for $r < V(1/\theta)$ and $\eta < 1$

\begin{equation}\label{skV1g}
	c_2 \eta^{2 / \la_1} \leq \frac{V^{-1}(\eta r)}{V^{-1}(r)}
\end{equation}
where $c_3 = c_3(\la_1, \lC_1, \CXX)$. 
\begin{remark} \rm
We note that the range of $r$ in \eqref{scalV2} and \eqref{skV1g} may be increased to any interval $(0,\ss)$ in the expense of the constants $c_1$ and $c_2$. The dependence of constants on $\alpha,c_\alpha$ and $\ss$ will be denoted by $\sigma$, i.e. $\sigma = \sigma(\alpha,c_\alpha,\ss)$
\end{remark}

If $\psi^* \in \WLSC{\la}{1}{\lC}$ with $\la>0$ the operator ${\cal L}$ possesses the heat kernel $p$, where 
\begin{equation}\label{eq:Forier_inv}p(t,x,y)=p_t(y-x)=(2\pi)^{-d}\int_\Rd e^{-t\psi(\xi)} \cos ((x-y) \cdot \xi)\pd{\xi}, \quad x,y\in\Rd.\end{equation}
Furthermore $p_t$ is smooth.
We use the next lemma to show existence of the fundamental solution for $\L$.

\begin{lem}\label{lem:gradP} Assume that $\psi^* \in \WLSC{\la}{1}{\lC}$ with $\la>1/2$.
	Then there exists a constant $\CIVd>0$ such that, for any  $x \in \Rd$,
	$$
		\int_0^\infty |\nabla_x p_t(x)| \pd{t} \leq \CIVd \frac{|x|V^2(|x|\wedge 1)}{(|x|\wedge 1)^{d+2}}.
	$$
\end{lem}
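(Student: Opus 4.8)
The plan is to estimate $\int_0^\infty |\nabla_x p_t(x)|\,\pd t$ by splitting the time integral into small times $t \le V^2(|x|\wedge 1)$ and large times $t > V^2(|x|\wedge 1)$, using the off-diagonal gradient estimates for $p_t$ from \cite{TGKS2021}. Recall that under $\psi^* \in \WLSC{\la}{1}{\lC}$ those estimates provide, for $t$ not too large, a bound of the form
$$
|\nabla_x p_t(x)| \le c\,\frac{|x|}{t}\,\Big(V^{-1}(\sqrt t)\Big)^{-d-2}\wedge c\,\frac{|x|}{t}\,\frac{t\, g(|x|)}{|x|}\cdots,
$$
i.e. a near-diagonal Gaussian-type term and a far-off-diagonal term controlled by $t\,\nu(x)$; more precisely one has $|\nabla_x p_t(x)| \lesssim |x|^{-1}\big(p_t(x/2)\,\text{-type bounds}\big)$ together with the standard heat kernel bounds $p_t(x) \lesssim V^{-1}(\sqrt t)^{-d} \wedge t\, g(|x|) V^{-1}(\sqrt t)^{-d}$ valid on the appropriate range. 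The key scaling facts are \eqref{scalV2}, \eqref{skV1g} and $h(\lambda r)\le h(r)\le \lambda^2 h(\lambda r)$, which let us compare $V^{-1}(\sqrt t)$ with $|x|$ at the crossover time $t \approx V^2(|x|)$.

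Concretely, first I would treat $|x| \ge 1$, where only large-time behavior matters and the RHS reduces to $\CIVd |x| V^2(1)$: here one uses that for $|x|\ge 1$ the gradient estimate gives $|\nabla_x p_t(x)| \lesssim |x| g(|x|)\wedge(\text{integrable tail in }t)$, and the doubling-type majorant $g^*$ together with $\psi^*\in\WLSC{\la}{1}{\lC}$ forces enough decay in $t$ to make $\int_0^\infty$ converge to something $\lesssim |x| V^2(1)$. The more delicate case is $|x| < 1$. There I would write $\int_0^{V^2(|x|)} + \int_{V^2(|x|)}^\infty$. On the first piece the far-diagonal estimate $|\nabla_x p_t(x)| \lesssim |x|^{-1}\, t\, g(|x|)\, |x|^{-d}\cdots$ — more honestly the bound in terms of $p_t$ evaluated away from the singularity — integrates against $\pd t$ to yield $\lesssim |x| V^2(|x|)/|x|^{d+2}$ after using $V^2(|x|) = 1/h(|x|)$ and $g(|x|)|x|^{d+2}\approx |x|^2/h(|x|)\cdot(\cdots)$, i.e. $K(|x|)\le h(|x|)$. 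On the second piece, $t > V^2(|x|)$, the near-diagonal estimate $|\nabla_x p_t(x)| \lesssim |x|\, t^{-1} V^{-1}(\sqrt t)^{-d-2}$ applies (since now $V^{-1}(\sqrt t) \gtrsim |x|$), and I would change variables $s = V^{-1}(\sqrt t)$, i.e. $t = V^2(s)$, $\pd t = 2V(s)V'(s)\pd s$, to get $\int_{|x|}^{1} |x|\, s^{-d-2}\, \frac{V'(s)}{V(s)}\,\pd s$ plus the tail $\int_1^\infty(\cdots)$; the scaling \eqref{scalV2}–\eqref{skV1g} bounds $V'(s)/V(s)\lesssim 1/s$ in an averaged sense (or rather one integrates $V(s)^2$ directly via the monotonicity $V(\lambda r)\le\lambda V(r)$), and the dominant contribution near $s=|x|$ gives exactly $|x|\cdot |x|^{-d-2} V^2(|x|)=|x|V^2(|x|)/|x|^{d+2}$, while the remaining range $s\in(|x|,1)$ and the tail $t>V^2(1)$ contribute lower-order terms absorbed into the same bound since $V^2$ is increasing and $|x|<1$.

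The main obstacle I expect is bookkeeping the precise form of the gradient heat-kernel estimate from \cite{TGKS2021} and verifying that the crossover between its two regimes happens at $t\approx V^2(|x|\wedge 1)$, together with making the change of variables $t\leftrightarrow V^2(s)$ rigorous given that $V$ is only known to be non-decreasing, unbounded, and to satisfy \eqref{subadd} and the scaling bounds — so one should avoid differentiating $V$ and instead dyadically decompose the $t$-integral, $\int_{V^2(2^{-k-1})}^{V^2(2^{-k})}$, bounding each annulus by its length $\lesssim V^2(2^{-k})$ (using $h(\lambda r)\le\lambda^2 h(r)$) times the sup of the integrand, then summing the geometric-type series, whose decay is guaranteed by the weak lower scaling of $\psi^*$ (equivalently, $h(r)/r^\la$ almost decreasing, i.e. $V^2$ grows slower than $r^{-\la}$... actually faster-than-$r^{\la/2}$ decay of $V^{-1}$), which makes $\sum_k 2^{k(d+2)} V^2(2^{-k})$ comparable to its largest term $|x|^{-(d+2)}V^2(|x|)$. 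The symmetry and radial monotonicity of $g$ (hence of $p_t$) are used to guarantee the gradient bound has the stated dependence on $|x|$ only.
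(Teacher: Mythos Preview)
Your splitting point $T = V^2(|x|\wedge 1)$ is exactly what the paper uses, and on $[0,T]$ the idea is the same: the paper invokes \cite[Theorem 5.2]{TGKS2021} in the precise form
\[
|\nabla_x p_t(x)| \le c\,\frac{t}{V^{-1}(\sqrt t)}\,\frac{K(|x|)}{|x|^d},\qquad t\le V^2(1),
\]
not the form with $g(|x|)$ you wrote. The scaling \eqref{skV1g} then gives $1/V^{-1}(\sqrt t)\lesssim (T/t)^{1/\alpha}/V^{-1}(\sqrt T)$, so the time integral reduces to $\int_0^T t^{1-1/\alpha}\pd t$, and this is precisely where the hypothesis $\alpha>1/2$ enters (it makes the exponent $>-1$). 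After that one uses $K\le h=V^{-2}$ as you suspected.

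On $[T,\infty)$ the paper takes a genuinely different and shorter route than your proposed dyadic integration of a pointwise gradient bound. From the Fourier representation \eqref{eq:Forier_inv} one has directly
\[
|\nabla_x p_t(x)| \le |x|\,(2\pi)^{-d}\int_{\Rd} e^{-t\psi(\xi)}|\xi|^2\pd\xi
\]
(via $|\sin(x\cdot\xi)|\le |x||\xi|$). Fubini then collapses the $t$-integral exactly, giving $|x|\int e^{-T\psi(\xi)}|\xi|^2/\psi(\xi)\,\pd\xi$. The key step is the elementary subadditivity consequence $V(1/|\xi|)\le V(|x|\wedge 1)\big(1+\tfrac{1}{|\xi|(|x|\wedge 1)}\big)$, which factors $V^2(|x|\wedge 1)$ out of $1/\psi(\xi)\approx V^2(1/|\xi|)$; what remains is $\int e^{-T\psi(\xi)}\big(|\xi|^2+(|x|\wedge 1)^{-2}\big)\pd\xi$, and the bounds $\int e^{-T\psi}|\xi|^k\pd\xi\lesssim (V^{-1}(\sqrt T))^{-d-k}$ from \cite{TGKS2020} finish the job. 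No dyadic decomposition, no change of variables in $V$, and no separate treatment of $|x|\ge 1$ is needed.

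Your near-diagonal estimate as written, $|\nabla_x p_t(x)|\lesssim |x|\,t^{-1}V^{-1}(\sqrt t)^{-d-2}$, is not correct: the $t^{-1}$ is spurious. The correct on-diagonal bound is $|x|\,V^{-1}(\sqrt t)^{-d-2}$, which is nothing but the Fourier bound above with the $\xi$-integral estimated first. With that correction your dyadic scheme on $\int_T^\infty$ does work (the sum $\sum_k 2^{-kd}$ is geometric for all $d\ge 1$), but it is just a rearranged version of the paper's Fubini argument with extra bookkeeping.
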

\begin{proof}
By \cite[Theorem 5.2]{TGKS2021} we have, for $t\leq V^2(1)$ and $x\in\Rd$, 
$$|\nabla_x p_t(x)|\leq c_1 \frac{t}{V^{-1}(\sqrt{t})}\frac{K(|x|)}{|x|^d}.$$
Hence, for $T=V^2(|x|\wedge 1)$,
\begin{align}
	\int_0^T |\nabla_x p_t(x)| \pd{t} 
	& \leq \int_0^T c_1 \frac{t}{V^{-1}(\sqrt{t})}\frac{K(|x|)}{|x|^d}\pd{t}\nonumber \\
	& 
	\leq \frac{c_2K(|x|)}{|x|^d}  \frac{T^{1/\la}}{V^{-1}(\sqrt{T})}\int_0^T  t^{1 - 1/\la} \pd{t} \nonumber\\
	& = \frac{c_3K(|x|)}{|x|^d}  \frac{T^2}{V^{-1}(\sqrt{T})} \nonumber\\
	&\leq c_3\frac{V^2(|x|\wedge 1)}{|x|^{d}(|x|\wedge 1)},\label{eq:gradP1}\end{align}
where in the last line we used $K(r)\leq h(r )$.
Let us observe that \eqref{eq:Forier_inv} implies
  $$|\nabla_x p_t(x)|\leq |x| (2\pi)^{-d}\int_\Rd e^{-t\psi(\xi)} |\xi|^2 \pd{\xi}, \quad x\in\Rd.$$
 Since $V(s)\leq V(|x|)(1+s/|x|)$, for $s\geq 0$,
  \begin{align*}
	\int_T^\infty |\nabla_x p_t(x)| \pd{t} 
	& \leq  |x| (2\pi)^{-d}\int^\infty_T\int_\Rd e^{-t\psi(\xi)} |\xi|^2 \pd{\xi}\pd{t} \\
	& |x| (2\pi)^{-d}\int_\Rd e^{-T\psi(\xi)} \frac{|\xi|^2}{\psi(\xi)} \pd{\xi}\\ 
	&\leq  c_4|x|  \int_\Rd e^{-T\psi(\xi)} |\xi|^2 V^2(1/|\xi|)\pd{\xi}\\
	& \leq  c_4|x|  V^2(|x|\wedge 1)\int_\Rd e^{-T\psi(\xi)} (|\xi|^2 +(|x|\wedge1)^{-2})\pd{\xi}\\
	&\leq c_5|x|  V^2(|x|\wedge1)\left(\frac{1}{[V^{-1}(\sqrt{T})]^{d+2}}+\frac{1}{(|x|\wedge1)^{2}[V^{-1}(\sqrt{T})]^d}\right)\\
	&=2c_5|x|  V^2(|x|\wedge1)\frac{1}{[|x|\wedge 1]^{d+2}},\end{align*}
 where the last inequality is a consequence of \cite[Proposition 3.6 and Theorem 3.1]{TGKS2020}.
\end{proof}
\begin{cor}\label{cor:potfinite}
Let $\mathbf{1} = (1,0,\ldots,0)$. For $x\in \R^d \setminus\{0\}$ we have
\begin{align*}
\int_0^\infty |p_t(x) - p_t(\mathbf{1})| dt < \infty.
\end{align*}
\end{cor}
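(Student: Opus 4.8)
The plan is to split the time integral at a fixed threshold, say $t_0 = V^2(1)$, and treat the regions $(0,t_0)$ and $(t_0,\infty)$ separately, using Lemma \ref{lem:gradP} for the small-time part and a crude spectral bound for the large-time part.

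For the large-time part $\int_{t_0}^\infty |p_t(x)-p_t(\mathbf 1)|\,dt$, I would simply bound $|p_t(x)-p_t(\mathbf 1)| \le p_t(x) + p_t(\mathbf 1)$ and then use the Fourier representation \eqref{eq:Forier_inv} to write $p_t(y) \le (2\pi)^{-d}\int_{\Rd} e^{-t\psi(\xi)}\,d\xi$; integrating in $t$ over $(t_0,\infty)$ gives $(2\pi)^{-d}\int_{\Rd} e^{-t_0\psi(\xi)}/\psi(\xi)\,d\xi$, which is finite because $\psi$ grows at infinity (by the scaling $\psi^*\in\WLSC{\la}{1}{\lC}$ one has $e^{-t_0\psi(\xi)}/\psi(\xi)$ integrable near infinity, exactly as in the tail estimate at the end of the proof of Lemma \ref{lem:gradP}, invoking \cite[Proposition 3.6 and Theorem 3.1]{TGKS2020}). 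This bound is uniform in $x$, which is even a little more than needed.

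For the small-time part, the key observation is that $p_t(x)-p_t(\mathbf 1)$ can be written as a line integral of the gradient: fixing a smooth path $\gamma$ from $\mathbf 1$ to $x$ in $\Rd\setminus\{0\}$ of finite length (possible since $x\ne 0$ and $\mathbf 1\ne 0$, e.g. an arc of the sphere of radius $\min(|x|,1)$ followed by a radial segment), we have $|p_t(x)-p_t(\mathbf 1)| \le \int_\gamma |\nabla p_t(\eta)|\,|d\eta|$. Applying Fubini and Lemma \ref{lem:gradP},
$$
\int_0^{t_0}|p_t(x)-p_t(\mathbf 1)|\,dt \le \int_\gamma \Big(\int_0^\infty |\nabla_\eta p_t(\eta)|\,dt\Big)\,|d\eta| \le \CIVd \int_\gamma \frac{|\eta|\,V^2(|\eta|\wedge 1)}{(|\eta|\wedge 1)^{d+2}}\,|d\eta|.
$$
Along the path $\gamma$ the quantity $|\eta|$ stays bounded away from $0$ (it is at least $\min(|x|,1)$) and bounded above (by $\max(|x|,1)$), so the integrand is bounded on $\gamma$ by a finite constant depending on $x$; since $\gamma$ has finite length, the whole expression is finite. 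Combining the two parts gives $\int_0^\infty |p_t(x)-p_t(\mathbf 1)|\,dt<\infty$.

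The main obstacle is purely a matter of bookkeeping: one must be careful that the chosen path avoids the origin (where the bound in Lemma \ref{lem:gradP} blows up and where $p_t$ need not even be differentiable in the relevant sense), and that one has the right to differentiate $p_t$ along it and apply Fubini --- smoothness of $p_t$ on $\Rd$ (noted after \eqref{eq:Forier_inv}) and the nonnegativity of the integrands handle this. No sharp control in $x$ is required, only finiteness, so the crude bounds above suffice.
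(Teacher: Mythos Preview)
Your gradient--path idea is exactly what the paper does, but you have overcomplicated it and introduced two genuine problems.

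First, the time split is unnecessary and your large-time bound is wrong in the recurrent case. You estimate $\int_{t_0}^\infty(p_t(x)+p_t(\mathbf 1))\,dt$ by $(2\pi)^{-d}\int_{\Rd} e^{-t_0\psi(\xi)}/\psi(\xi)\,d\xi$ and claim this is finite because $\psi$ grows at infinity. But the integrability issue is at $\xi=0$, where $e^{-t_0\psi(\xi)}\to 1$ and $1/\psi(\xi)\to\infty$; e.g.\ for $d=1$ and $\psi(\xi)=|\xi|^\alpha$ with $\alpha>1$ one has $\int_{|\xi|<1}|\xi|^{-\alpha}\,d\xi=\infty$. This recurrent situation is precisely what the corollary must cover (note the definition of $G$ immediately following it). The fix is to drop the split altogether: your own path-integral estimate already invokes $\int_0^\infty|\nabla p_t(\eta)|\,dt$ from Lemma~\ref{lem:gradP}, so it bounds the full integral $\int_0^\infty|p_t(x)-p_t(\mathbf 1)|\,dt$, not only $\int_0^{t_0}$.

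Second, for $d=1$ the set $\R\setminus\{0\}$ is disconnected, so when $x<0$ there is no path from $\mathbf 1=1$ to $x$ avoiding the origin, contrary to your parenthetical claim. The paper handles this (and simultaneously simplifies the path to a straight segment in every dimension) by first using the symmetry $p_t(-y)=p_t(y)$ to reduce to the case where the first coordinate of $x$ is nonnegative; then the segment $s\mapsto \mathbf 1+s(x-\mathbf 1)$, $s\in[0,1]$, never meets $0$, and the mean value theorem together with Lemma~\ref{lem:gradP} gives the bound directly. With these two corrections your argument becomes the paper's proof.
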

\begin{proof}
Let $x\not=0$. By the symmetry of $p_t$ we may and do assume that the first coordinate of $x$ is nonnegative.  Due to Lemma \ref{lem:gradP} and mean value theorem we have
\begin{align}
\int^\infty_0|p_t(x)-p_t(\mathbf{1})|\pd{t}&\leq\int^\infty_0\int^1_0|(x-\mathbf{1})\cdot\nabla p_t(\mathbf{1}+s(x-\mathbf{1}))|ds\pd{t}\nonumber\\
&\leq  \CIVd |x-\mathbf{1}|\int^1_0\frac{|\mathbf{1}+s(x-\mathbf{1})|V^2(|\mathbf{1}+s(x-\mathbf{1})|\wedge 1)}{(|\mathbf{1}+s(x-\mathbf{1})|\wedge 1)^{d+2}}\pd{s}\nonumber\\
&\leq \CIVd |x-\mathbf{1}|\frac{(|x|+1)V^2(|x|\wedge 1)}{(|x|\wedge 1)^{d+2}}.\label{eq:diffP_t}
\end{align}
\end{proof}
Corollary \ref{cor:potfinite} together with the symmetry of $p_t$ let us define a fundamental solution $G(x)$ of ${\cal L}$. Namely, let $\mathbf{1} = (1,0,\ldots,0)$ and define
\begin{align*}
G(x) = 
\begin{cases} \displaystyle \int^\infty_0 p_t(x)\pd{t},&\mbox{if it is finite almost everywhere,}\\[10pt]
\displaystyle \int^\infty_0(p_t(x)-p_t(\mathbf{1}))\pd{t}, &\mbox{otherwise.}
\end{cases}
\end{align*}
The first integral above is finite if and only if $\int_{B(0,1)} \frac{1}{\Psi(\xi)} \pd\xi <\infty$ (see \cite[Theorem 37.5]{MR1739520}). In particular it happens when $d\ge3$ (see \cite[Theorem 37.8]{MR1739520}). Note that if $\int_0^\infty p_t(x)\pd{t} = \infty$ on the set $A$ of the positive Lebesgue measure, by the Chapman-Kolmogorov equation it is infinite everywhere. Indeed,
\begin{align*}
\int_0^\infty p_t(y)\pd{t} \ge \int_1^\infty \int_A  p_1(y-x)p_{t-1}(x)\pd{x}\pd{t} = \infty, \qquad y \in \RR^d.
\end{align*}

\begin{lemma}\label{lem:gradG}
Assume that $\psi^* \in \WLSC{\la}{1}{\lC}$ with $\la>1/2$. Then, $G$ is differentiable on $\Rd\setminus\{0\}$  and there exists $\CIIId$ such that, for $|x|\leq 1$,
$$|\nabla G(x)|\leq \CIIId \frac{V^2(|x|)}{|x|^{d+1}}.$$
Furthermore if $d\geq 2$ or $\psi^* \in \WLSC{\la}{0}{\beta}$ for some $\beta>0$, 
\begin{equation}\label{eq:gradG}|\nabla G(x)|\leq \CIIId \frac{V^2(|x|\wedge 1)}{(|x|\wedge1)^{d+1}},\quad x\in\Rd.\end{equation}
\end{lemma}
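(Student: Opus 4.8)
The plan is to obtain $\nabla G$ by differentiating $p_t$ under the time integral in the definition of $G$, and then to read off the two bounds from Lemma~\ref{lem:gradP}. Lemma~\ref{lem:gradP} already gives $\int_0^\infty|\nabla_x p_t(x)|\pd t<\infty$ for every $x\neq 0$, with a right-hand side bounded on compact subsets of $\Rd\setminus\{0\}$; moreover the two pointwise estimates appearing inside its proof — $|\nabla_x p_t(x)|\le c\,t\,V^{-1}(\sqrt t)^{-1}K(|x|)|x|^{-d}$ for $t\le V^2(1)$, and $|\nabla_x p_t(x)|\le|x|(2\pi)^{-d}\int_\Rd e^{-t\psi(\xi)}|\xi|^2\pd{\xi}$ for all $t$ — provide, for each compact $K\subset\Rd\setminus\{0\}$, a dominating function $t\mapsto\sup_{x\in K}|\nabla_x p_t(x)|$ that is integrable on $(0,\infty)$. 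So differentiation under the integral sign is justified; in the ``otherwise'' branch the subtracted term $p_t(\mathbf{1})$ does not depend on $x$ and drops out. This yields that $G$ is differentiable on $\Rd\setminus\{0\}$ with $\nabla G(x)=\int_0^\infty\nabla_x p_t(x)\pd t$, and for $|x|\le1$ Lemma~\ref{lem:gradP} immediately gives $|\nabla G(x)|\le\CIVd\,|x|V^2(|x|)|x|^{-(d+2)}=\CIVd\,V^2(|x|)|x|^{-(d+1)}$, which is the first assertion.

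For \eqref{eq:gradG} it then remains to show $|\nabla G(x)|\le\CIIId\,V^2(1)$ when $|x|>1$. I would split $\int_0^\infty=\int_0^{T_0}+\int_{T_0}^\infty$ with $T_0=V^2(1)$. Since $K(|x|)\le h(|x|)\le h(1)$ for $|x|\ge1$ and $\int_0^{T_0}t\,V^{-1}(\sqrt t)^{-1}\pd t<\infty$, the first piece is $\lesssim|x|^{-d}\le 1$. For the tail I would use the Chapman--Kolmogorov identity $p_t(x)=\int_\Rd p_{T_0}(x-y)p_{t-T_0}(y)\pd y$, differentiate in $x$, integrate over $t\in(T_0,\infty)$, and use $\int_\Rd\nabla_x p_{T_0}(x-y)\pd y=\nabla_x 1=0$ to write
\[
\int_{T_0}^\infty\nabla_x p_t(x)\pd t=\int_\Rd\nabla_x p_{T_0}(x-y)\bigl(G(y)-G(\mathbf{1})\bigr)\pd y .
\]
The right-hand side is then bounded uniformly in $|x|\ge1$ by splitting the $y$-integral: on $\{|y-x|\le1\}$ one uses $|\nabla_x p_{T_0}(x-y)|\le|x-y|(2\pi)^{-d}\int e^{-T_0\psi(\xi)}|\xi|^2\pd{\xi}$, integrable across the singularity, against $|G(y)-G(\mathbf{1})|$, which is bounded there; for $|y-x|>1$ one pairs the off-diagonal bound for $\nabla p_{T_0}$ from \cite{TGKS2021} with the growth estimate $|G(y)-G(\mathbf{1})|\lesssim|y|^2V^2(1)$ obtained as in \eqref{eq:diffP_t}. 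Convergence of the resulting spatial integral at infinity is precisely where the hypothesis ``$d\ge2$ or $\psi^*\in\WLSC{\la}{0}{\beta}$'' is needed; in the global-scaling case one moreover gets that $h$ decays polynomially at infinity, so that $G$ is transient and one may keep $G(y)$ itself (bounded off a neighbourhood of the origin by $\int_{T_0/2}^\infty p_s(0)\pd s$) in place of the difference.

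The main obstacle is this tail estimate for $|x|>1$: it is where the off-diagonal heat-kernel gradient bounds from \cite{TGKS2021} and the distinction between the transient and recurrent cases are really used. By contrast, the differentiation step and the bound for $|x|\le1$ follow directly from Lemma~\ref{lem:gradP}.
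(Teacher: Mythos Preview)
Your justification of differentiability and the bound for $|x|\le 1$ are correct and coincide with the paper's argument: both follow at once from Lemma~\ref{lem:gradP}.

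For $|x|>1$, however, the Chapman--Kolmogorov route has a real gap. Your claim that global lower scaling implies transience is wrong in $d=1$: $\psi^*\in\WLSC{\la}{0}{\beta}$ with $\la>1$ forces $\psi(\xi)\le c|\xi|^{\la}$ for $|\xi|\le 1$, so $\int_{|\xi|<1}\psi(\xi)^{-1}\pd{\xi}=\infty$ and the process is recurrent. You must therefore work with the compensated potential, for which the only available growth bound is \eqref{eq:diffP_t}, namely $|G(y)-G(\mathbf{1})|\le c\,|y|^2V^2(1)$ for large $|y|$. Against this, the off-diagonal estimate from \cite{TGKS2021} at $T_0=V^2(1)$ gives $|\nabla p_{T_0}(z)|\le c\,K(|z|)/|z|^d$, and for stable-like $\nu$ (certainly allowed by the hypotheses) one has $K(|z|)\approx |z|^{-\la}$ for large $|z|$. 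The integrand in your representation then behaves like $|y|^{2-d-\la}$ at infinity, which is not integrable since $\la<2$; the same obstruction appears for $d=2$ in the recurrent case. A sharper large-$|y|$ bound on $G$ would be needed, but none is established here. The paper sidesteps this entirely: for $|x|\ge1$ it uses the uniform-in-$x$ Fourier bound $|\nabla_x p_t(x)|\le(2\pi)^{-d}\int_\Rd e^{-t\psi(\xi)}|\xi|\pd{\xi}$ (coming from $|\sin(x\cdot\xi)|\le 1$ rather than $\le|x||\xi|$), integrates over $t>V^2(1)$, and is left with a $\xi$-integral whose only delicate point is a factor $|\xi|^{-1}$ near the origin, integrable precisely when $d\ge2$; for $d=1$ under global scaling a further split at $t=V^2(|x|)$ together with \eqref{scall} finishes the job. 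No spatial convolution with $G$ is needed.
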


\begin{proof}
Let $x\neq 0$ and  $|h|<|x|/2$. Observe that Lemma \ref{lem:gradP} and \eqref{subadd} allow use us the Fubini theorem and get
$$G(x+he_i)-G(x)= \int^\infty_0\int^{h}_0 \partial_{x_i}p_t(x+ue_i) \pd{u}\pd{t}=\int^{h}_0 \int^\infty_0\partial_{x_i}p_t(x+ue_i)  \pd{t}\pd{u}.$$
Hence,
$$\nabla G(x)= \int^\infty_0\nabla_x p_t(x)  \pd{t}.$$
Another application of Lemma \ref{lem:gradP} gives us the upper bound for $ |\nabla G|$ if $|x|\leq1$.
Now we consider $|x|\geq 1$. 
Let us observe that \eqref{eq:Forier_inv} implies
  $$|\nabla_x p_t(x)|\leq (2\pi)^{-d}\int_\Rd e^{-t\psi(\xi)} |\xi| \pd{\xi}, \quad x\in\Rd.$$
Now the same line of reasoning as in the proof of Lemma \ref{lem:gradP} gives us for $d\geq 2$,
$$\int_{V^2(1)}^\infty |\nabla_x p_t(x)| \pd{t}
	\leq  c  V^2(1)\int_\Rd e^{-V^2(1)\psi(\xi)} (|\xi|^2 +|\xi|^{-1})\pd{\xi}<\infty.$$
This together with \eqref{eq:gradP1} yield the boundedness of $|\nabla G|$ on $B^c(0,1)$ in this case.

Under global lower scaling condition, by \cite[Proposition 3.6 and Theorem 3.1]{TGKS2020} and \eqref{subadd} we obtain
$$\int_{V^2(|x|)}^\infty |\nabla_x p_t(x)| \pd{t} 
	 \leq  c|x|  V^2(|x|)\int_\Rd e^{-V^2(|x|)\psi(\xi)} (|\xi|^2 +|x|^{-2})\pd{\xi}\\
	\leq c  \frac{V^2(|x|)}{|x|^{d+1}}\leq c V^2(1)$$
Let us notice that, since $V(\lambda r)\leq \lambda V(r)$, for $\lambda\geq 1$, $r>0$ we have $\lambda V^{-1}(r)\leq V^{-1}(\lambda r)$. 
Hence
\begin{align*}
	\int_{V^2(1)}^{V^2(|x|)} |\nabla_x p_t(x)| \pd{t} 
	& \leq c\frac{K(|x|)}{|x|^d}  \int_{V^2(1)}^{V^2(|x|)}  t\frac{V^{-1}(V(1))}{V^{-1}(\sqrt{t})} \pd{t} \nonumber\\
	& \leq   c\frac{K(|x|)}{|x|^d} \int_{V^2(1)}^{V^2(|x|)}  t\frac{V(1)}{\sqrt{t}} \pd{t} \nonumber\\
	&\leq cV(1)\frac{V(|x|)}{|x|^{d}}\leq cV^2(1).
\end{align*}
 
\end{proof}

\subsection{Green function}
Let $p(t,x,y) = p_t(y-x)$. We consider the time-homogeneous transition probabilities
$$
P_t(x,A) =\int_A p(t, x,y)\pd{y}, \qquad t>0, x\in \R, A\subset\Rd.
$$
By the Kolmogorov and Dynkin-Kinney theorems the transition
probability $P_t$ define in the usual way Markov probability measures 
$\{\PP^x,\,x\in \Rd\}$ on the space $\Omega$ of the
right-continuous and left-limited functions $\omega :[0,\infty)\to \R$.
We let $\EE^x$ be the corresponding expectations.
We will denote by $X=\{X_t\}_{t\geq 0}$  the canonical process on $\Omega$, $X_t(\omega) = \omega(t)$. Hence,
$$
\PP(X_t \in B) = \int_B p(t,x,y)\pd{y}. 
$$
and $X_t$ is a pure-jump symmetric L\'evy process on $\Rd$ with the L\'evy-Khinchine exponent $\psi$.
For any open set $D$, we define {\it the first exit time}\/ of the process $X_t$ from $D$,
$$\tau_D=\inf\{t>0: \, X_t\notin D\}\,.$$
Now, by the usual Hunt's formula, we define the transition density of the process {\it killed}\/ when leaving $D$
(\cite{MR0264757}, \cite{MR1329992}, \cite{MR3249349}):
\begin{equation}\label{eq:Huntp}
p_D(t,x,y)=p(t,x,y)-\EE^y[\tau_D<t;\, p(t-\tau_D, x,X_{\tau_D})],\quad t>0
,\,x,y\in \Rd \,.
\end{equation}
We briefly recall some well known properties of $p_D$ (see \cite{MR3249349}).
The function $p_D$ satisfies the Chapman-Kolmogorov equations
$$
\int_\Rd p_D(s,x,z)p_D(t,z,y)\pd{z}=p_D(s+t,x,y)\,,\quad s,t>0 ,\,
x,y\in \Rd\,.
$$
Furthermore, $p_D$ is jointly continuous when $t\neq 0$, and we have
\begin{equation}\label{eq:gg}
  0\leq p_D(t,x,y)=p_D(t,y,x)\leq p(t,x,y)\,.
\end{equation}
In particular,
\begin{equation}
  \label{eq:9.5}
  \int_\R p_D(t,x,y)\pd{y}\leq 1\,.
\end{equation}
If $D$ is a ball, by the Blumenthal 0-1 law, symmetry of $p_t$, we have
$\PP^x(\tau_D=0)=1$ for every $x\in D^c$.
In particular, $p_D(t,x,y)=0$ if $x\in D^c$ or $y\in D^c$. 

We define the Green function of $X_t$ for $D$,
\begin{align}\label{def_g}
 G_D(x,y)=\int_0^\infty p_D(t,x,y)\pd{t},  \qquad x,y \in \Rd\,
\end{align}

\begin{lemma}
Let $D$ be an open set. Additionally, we assume that $D$ is bounded if $\int_0^\infty p(t,x,y) \pd{t} = \infty$.  If $(x,y) \notin D^c \times D^c$, then
\begin{equation}\label{eq:Green_pot} 
G_D(x,y)= G(x-y)-\EE^y  G(x-X_{\tau_D}).
\end{equation}
If $D$ has the outer cone property,  \eqref{eq:Green_pot} holds for every $x,y\in\Rd$.
\end{lemma}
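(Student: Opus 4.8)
The plan is to integrate Hunt's formula \eqref{eq:Huntp} in time. Writing $p(t,x,y)=p_t(x-y)$ by symmetry of $p_t$, integrating \eqref{eq:Huntp} over $t\in(0,\infty)$, applying Tonelli's theorem to the nonnegative subtracted term and then substituting $s=t-\tau_D$, gives the unconditional identity
\begin{equation*}
G_D(x,y)=\int_0^\infty p_t(x-y)\,\pd t-\EE^y\Big[\tau_D<\infty;\ \int_0^\infty p_s(x-X_{\tau_D})\,\pd s\Big].
\end{equation*}
If $\int_0^\infty p(t,x,y)\,\pd t<\infty$ this is already \eqref{eq:Green_pot}: the first integral is $G(x-y)$ by definition, and for $x\in D$ one has $X_{\tau_D}\in D^c$, so $x-X_{\tau_D}$ is bounded away from $0$ and $\int_0^\infty p_s(x-X_{\tau_D})\,\pd s=G(x-X_{\tau_D})$ on $\{\tau_D<\infty\}$; the case $x\in D^c$, $y\in D$ (where $G_D(x,y)=0$) is covered by the same displayed equality.

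If $\int_0^\infty p(t,x,y)\,\pd t=\infty$, so that $D$ is bounded and $G$ is the compensated kernel of Corollary~\ref{cor:potfinite}, both time integrals above diverge and must be renormalised. I would fix a reference point $e\notin\overline D$ (available since $D$ is bounded) and subtract from Hunt's formula for $p_D(t,x,y)$ the Hunt formula for $p_D(t,e,y)$, which vanishes identically because $e\notin D$; this yields
\begin{equation*}
p_D(t,x,y)=\big(p_t(x-y)-p_t(e-y)\big)-\EE^y\big[\tau_D<t;\,p_{t-\tau_D}(x-X_{\tau_D})-p_{t-\tau_D}(e-X_{\tau_D})\big].
\end{equation*}
After integrating in $t$, using Fubini and the time shift, and rewriting the resulting differences through the compensated kernel — all of which converge absolutely by Corollary~\ref{cor:potfinite} — one arrives at
\begin{equation*}
G_D(x,y)=\big(G(x-y)-\EE^y G(x-X_{\tau_D})\big)-\big(G(y-e)-\EE^y G(e-X_{\tau_D})\big).
\end{equation*}
The second bracket is $0$: since $e\notin\overline D$, the function $z\mapsto G(z-e)$ is bounded and smooth on $\overline D$ and $\L$-harmonic in $D$ (as $\L G=-\delta_0$), hence $M_t=G(X_{t\wedge\tau_D}-e)$ is a bounded $\PP^y$-martingale, and optional stopping together with $\tau_D<\infty$ a.s.\ (bounded $D$) gives $G(y-e)=\EE^y G(X_{\tau_D}-e)=\EE^y G(e-X_{\tau_D})$.

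The step I expect to require the most care is the interchange of $\EE^y$ and $\int_0^\infty\pd t$ in the renormalised computation — equivalently, passing $T\to\infty$ after truncating the time integral at horizon $T$. One needs a $\PP^y$-integrable majorant for $\sup_T\big|\int_0^T\big(p_t(x-y)-p_t(x-X_{\tau_D})\big)\,\pd t\big|$: I would absorb the truncation error and the tail term $\big(\int_0^Tp_t(x-y)\,\pd t\big)\PP^y(\tau_D\ge T)$ using the exponential decay of $\PP^y(\tau_D\ge T)$ for bounded $D$, and control the main contribution by the gradient estimate of Lemma~\ref{lem:gradP} and \eqref{eq:diffP_t}, using $|x-X_{\tau_D}|\ge\delta_x>0$ to stay away from the singularity, while the contribution of large $|X_{\tau_D}|$ is handled through the L\'evy-system representation of $\PP^y(X_{\tau_D}\in\cdot)$ in terms of $G_D(y,\cdot)$ and $\nu$. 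Finally, to pass from $(x,y)\notin D^c\times D^c$ to all $x,y\in\Rd$ when $D$ has the outer cone property, I would use that every boundary point is then regular, so $\PP^y(\tau_D=0)=1$ for $y\in\partial D$, which disposes of the remaining configurations in which $x$ or $y$ lies on $\partial D$.
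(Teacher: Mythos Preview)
In the transient case your argument matches the paper's. In the recurrent case your route is genuinely different from the paper's, and the step where you claim the second bracket vanishes is where it breaks down.

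You assert that $M_t=G(X_{t\wedge\tau_D}-e)$ is a \emph{bounded} $\PP^y$-martingale because ``$\L G=-\delta_0$''. Two problems. First, the martingale is not bounded: $X_{\tau_D}$ may land arbitrarily close to $e\in D^c$, so $G(X_{\tau_D}-e)$ is unbounded; you would at least need a uniform-integrability argument, and the singularity of $G$ at $0$ makes this nontrivial. Second, and more seriously, the input ``$\L G=-\delta_0$'' (equivalently, that $G(\cdot-e)$ satisfies the mean-value property in $D$) is not available at this point of the paper; in fact Lemma~\ref{lem:harmG}, which establishes the MVP of $G$, \emph{uses} \eqref{eq:Green_pot}. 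So your justification for the vanishing of $G(y-e)-\EE^yG(e-X_{\tau_D})$ is circular. Observing that this bracket is independent of $e$ (since the other terms are) does not by itself give $0$ either, and letting $|e|\to\infty$ would require decay of $\nabla G$ at infinity that the paper does not provide in the recurrent regime.

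The paper avoids this altogether by a resolvent regularisation: it writes Hunt's formula for $\int_0^\infty e^{-\lambda t}p_D(t,x,y)\,\pd t$, adds and subtracts $U_\lambda(\mathbf{1})$, and then lets $\lambda\downarrow 0$. The role of your ``second bracket'' is played by $U_\lambda(\mathbf{1})\,\EE^y[1-e^{-\lambda\tau_D}]$, which is shown to vanish via $1-e^{-\lambda\tau_D}\le\lambda\tau_D$, $\EE^y\tau_D<\infty$, and the key fact $\lambda U_\lambda(\mathbf{1})\to 0$ (argued by contradiction through \eqref{eq:diffP_t}). This sidesteps any need for a priori harmonicity of $G$.
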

\begin{proof}
If $ \int^\infty_0 p_t(x)\pd{t}<\infty$ almost everywhere, \eqref{eq:Green_pot} follows directly by the Hunt formula \eqref{eq:Huntp}.
Now, suppose that $ \int^\infty_0 p_t(x)\pd{t}=\infty$ everywhere.  For $\lambda>0$, we define
$$U_\lambda(x)= \int^\infty_0 e^{-\lambda t} p_t(x)d{t}.$$
By the Hunt formula \eqref{eq:Huntp}, we have 
\begin{align*}
\int^\infty_0 e^{-\lambda t} p_D(t,x,y)\pd{t} = &U_\lambda(y-x)- \EE^y\left[e^{-\lambda \tau_D}( U_\lambda(X_{\tau_D}-x))\right ]\\
=&U_\lambda(y-x)-U_\lambda(\textbf{1}) -\EE^y\left[e^{-\lambda \tau_D}( U_\lambda(X_{\tau_D}-x)-U_\lambda(\textbf{1}))\right ]\\
&+U_{\lambda}(\textbf{1})\EE^y[1-e^{-\lambda \tau_D}]
\end{align*}
Note that $\lim_{\lambda\to0}\lambda U_{\lambda}(1) =0$. Indeed, due to \eqref{eq:diffP_t} if there would be a sequence $\lambda_k \to 0$ such that $ \lim_{k\to\infty}\lambda_k U_{\lambda_k}(\textbf{1})>0$, then we would have $\lim_{k\to\infty}\lambda_k U_{\lambda_k}(x)>0$, for $x\neq 0$. Therefore 
$$\limsup_{\lambda \to 0} \lambda \int_{B_1} U_{\lambda}(x)\pd{x}>0,$$
which gives a contradiction with \cite[the proof of Lemma A.1]{MR4194536}. Since $1-e^{-\lambda \tau_D} \le \lambda \tau_D$ and $\EE^y\tau_D <\infty$, we get 
$$\lim_{\lambda\to 0}U_{\lambda}(\textbf{1})\EE^y[1-e^{-\lambda \tau_D}] = 0.$$
Therefore, due to \eqref{eq:diffP_t} and the symmetry of $p_D$, by taking $\lambda \to 0$, \eqref{eq:Green_pot} holds for $(x,y) \not\in D^c \times D^c$. If $D$ has the outer cone property, then $\PP^y(\tau_D=0) = 1$ for $y \in D^c$ and \eqref{eq:Green_pot} holds for every $x,y\in\Rd$.
\end{proof}

\begin{definition}\label{def:C11}
We say that a non-empty open $D\subset \Rd$ is of class $C^{1,1}$ at scale $r>0$
if for every $Q\in \partial D$ there are balls
$B(x',r)\subset D$ and $B(x'',r)\subset D^c$ tangent at $Q$.
\end{definition}

We note that if the boundary of $D$ is sufficiently regular (e.g. if $D$ is $C^{1,1}$ set), by the Ikeda-Watanabe formula, the $\PP^x$ distribution of $X_{\tau_D}$ is absolutely continuous with respect to Lebesgue measure in $\R^d$, \cite{MR1825650}. Its density $P_D(x,z)$ is called the \textit{Poisson kernel} and it is given by the Ikeda-Watanabe formula (\cite[Theorem 1]{MR0142153})
\begin{align}\label{eq:Poisson}
	P_D(x,z) = \int_D G_D(x,z) \nu(z-y) dz, \qquad x\in D,\; z \in \overline{D}^c.
\end{align} 
Denote $P_D[f](x) = \E^x f(X_{\tau_D})$.
If $D$ is $C^{1,1}$ domain (or more generally if $\PP^x(X_{\tau_D} \in \partial D) =0$), we have 
\begin{align*}
P_D[f](x) = \int_{D^c} P_D(x,y) f(y) \pd{y}.
\end{align*}

\begin{definition}\label{def:MVP}
We say that $u$ satisfies \textit{mean value property} (MVP) in $D$ if $P_D[|u|](x)<\infty$ and $u(x)=P_D[u](x)$. If $u$ satisfies MVP in every open set relatively compact in $D$ then we say that $u$ satisfies MVP inside $D$. 
\end{definition}
Hence, if $f$ satisfies MVP inside the open set $D$, then for any $r<\delta_x$,
\begin{align}\label{eq:hvPoiss}
	f(x) = \E^x(f(X_{\tau_{B(x,r)}})) = \int_{B(x,r)^c} P_{B(x,r)}(x,z) f(z) \pd{z}.
\end{align}
Sometimes, in the literature functions satisfying MVP are called harmonic. 
We use this terminology to distinct the mean value property from $\L$-harmonicity. Both properties are closely related as it is shown in Appendix. Note that by \eqref{eq:Huntp} and \eqref{def_g},
\begin{align}\label{eq:G1}
\int_D G_D(x,y) \pd{y} = \E^{x} \tau_D.
\end{align}
Let us observe that the Pruitt bounds \cite[see p. 954, Theorem 1 and (3.2) ibid.]{MR632968} imply existence a constant $c=c(d)$ such that for any bounded open set $D$
\begin{align}\label{eq:ExtauDest}
	c^{-1}V^2(\delta_x)\leq \mathbb{E}^x\tau_D\leq c V^2(\diam D).
\end{align}
We note that if $D$ is a ball, better estimates for $\E^x \tau_D$ may be derived (see Lemma \ref{lem:cont_s} in Appendix).
Since $g$ is nonincreasing this implies, due to \eqref{eq:Poisson} and \eqref{eq:G1}
that there exists a constant $C$ such that for any bounded  open $D$ 
\begin{equation}
\label{eq:PoissonUp}
C^{-1} V^2(\delta_x) g(\delta_z+\diam D)\leq P_D(x,z)\leq C V^2(\diam D)g(\delta_z),\quad x\in D,\, z\in \overline{D}^c
\end{equation}

\begin{lemma}\label{lem:harmG}
Let $D$ be a $\cjj$ domain. For every $x \in D$, $G_D(x,y)$ satisfies MVP inside $D\setminus\{x\}$ and satisfies MVP in $D \setminus B(x,\varepsilon)$, where $0 < \varepsilon<\delta_x$.
\end{lemma}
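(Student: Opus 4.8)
The plan is to verify directly that $y \mapsto G_D(x,y)$ satisfies the mean value property on balls $B(y_0,r)$ whose closure is contained in $D \setminus \{x\}$ (resp. in $D \setminus B(x,\varepsilon)$), i.e.\ that $G_D(x,y_0) = \E^{y_0} G_D(x, X_{\tau_{B(y_0,r)}})$, together with finiteness of $\E^{y_0} G_D(x, \cdot)(X_{\tau_{B(y_0,r)}})$. The natural tool is the strong Markov property applied at the exit time $\tau_{B(y_0,r)}$ for the killed process. Writing $B = B(y_0,r)$ and using the Hunt decomposition \eqref{eq:Huntp}, one has for the killed semigroup the identity
\begin{align*}
p_D(t,x,y_0) = \E^{y_0}\!\big[t < \tau_B;\, p_D(t,x,X_{\tau_B})\big] \cdot \mathbf{1} + \text{(killed-inside-$B$ part)},
\end{align*}
but more cleanly: by the strong Markov property of $X$ killed outside $D$, since $B \Subset D$, the process killed outside $D$ and started at $y_0$ exits $B$ before being killed, and
\begin{align*}
G_D(x,y_0) = \int_0^\infty p_D(t,x,y_0)\pd{t} = \E^{y_0} G_D\!\big(x, X_{\tau_B}\big) + \int_0^\infty \E^{y_0}\!\big[t<\tau_B;\, p_B(t,y_0,\cdot)\big]\text{-correction},
\end{align*}
so that the key point is that the ``extra'' term $\int_0^\infty p_B^{?}$ vanishes precisely because $x \notin \overline{B}$. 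Concretely, I would use that $G_D(x,\cdot)$ restricted away from $x$ is the potential (with respect to the killed semigroup) of the zero function on $B$, hence is \L-harmonic in the probabilistic sense on $B$; the cleanest route is the identity $G_D(x,y) = G_B(x,y) + \E^y G_D(x,X_{\tau_B})$ valid for $x,y \in D$, which follows from Dynkin's formula / the Markov property, and then noting $G_B(x,y_0) = 0$ when $x \notin \overline{B}$ because the $B$-killed process started at $x$ never reaches $y_0 \in B$ — or rather, by symmetry, the $B$-killed process never returns to $x$. This gives $G_D(x,y_0) = \E^{y_0} G_D(x,X_{\tau_B})$ directly.

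The steps, in order: (1) recall the additivity identity $G_D(x,y) = G_B(x,y) + \E^y[G_D(x,X_{\tau_B})]$ for any open $B \subset D$ with $\overline B \subset D$ and $x,y \in D$; this is standard and follows from \eqref{eq:Huntp} together with the strong Markov property of $X$ at $\tau_B$ (decompose the path of the $D$-killed process at $\tau_B$). (2) When $\overline{B(y_0,r)} \subset D \setminus \{x\}$, observe $x \in D \setminus \overline{B}$, so $p_B(t,x,\cdot) \equiv 0$ (the process starting outside $\overline B$ has $\tau_B = 0$ by Blumenthal's $0$--$1$ law and the symmetry of $p_t$, exactly as recorded in the excerpt for balls), whence $G_B(x,y_0) = 0$ and therefore $G_D(x,y_0) = \E^{y_0} G_D(x,X_{\tau_B})$. (3) Since $D$ is $C^{1,1}$, $\PP^{y_0}(X_{\tau_B} \in \partial B) = 0$, so the expectation is $\int_{B^c} P_B(y_0,z) G_D(x,z)\pd z = P_B[G_D(x,\cdot)](y_0)$, which is the required MVP identity for $u = G_D(x,\cdot)$ on $B$. (4) Finiteness of $P_B[\,|G_D(x,\cdot)|\,](y_0) = P_B[G_D(x,\cdot)](y_0) = G_D(x,y_0) < \infty$ (the Green function is finite off the diagonal under our assumptions — when $D$ is unbounded and the potential is infinite, $G_D$ is still finite by \eqref{eq:Green_pot}, or one works locally) then follows for free. (5) For the second assertion, repeat with $D \setminus B(x,\varepsilon)$ in place of $D \setminus \{x\}$: whenever $\overline B \subset D \setminus B(x,\varepsilon)$ we still have $x \notin \overline B$, so the same computation applies; MVP in (not just inside) $D \setminus B(x,\varepsilon)$ needs a further exhaustion/limiting argument using that $D$ is $C^{1,1}$ so $\PP^y(X_{\tau_{D \setminus B(x,\varepsilon)}} \in \partial(D\setminus B(x,\varepsilon)))=0$, plus the additivity identity applied to the set $D \setminus B(x,\varepsilon)$ itself together with $G_{D \setminus B(x,\varepsilon)}(x,\cdot) \equiv 0$ since $x$ is not in that set.

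The main obstacle is step (4)/(5): making sure the MVP is not merely a formal identity but that $P_B[|G_D(x,\cdot)|]$ is genuinely finite and that the integrals converge, especially near $\partial D$ and (in the unbounded case) at infinity, where $G_D(x,z)$ as a function of $z$ may fail to be integrable against the Poisson kernel a priori. This is handled by the identity itself — the integral $\E^{y_0} G_D(x,X_{\tau_B})$ equals $G_D(x,y_0)$, which is finite by \eqref{eq:Green_pot} and Corollary \ref{cor:potfinite} — so no independent integrability estimate is needed; the only care is the passage from ``MVP inside $D \setminus B(x,\varepsilon)$'' to ``MVP in $D \setminus B(x,\varepsilon)$'', which requires the $C^{1,1}$ regularity to guarantee the exit distribution charges no boundary point and a monotone/dominated convergence argument along an exhaustion of $D \setminus B(x,\varepsilon)$ by relatively compact open subsets.
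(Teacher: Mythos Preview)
Your approach is correct and is essentially the same idea as the paper's, with one cosmetic difference worth noting. The paper first proves that the \emph{fundamental solution} $G$ satisfies MVP inside $\Rd\setminus\{0\}$ (using \eqref{eq:Green_pot} applied to the ball $B$ together with $G_B(x,y)=0$ for $x\in\overline{B}^c$), and then transfers this to $G_D(x,\cdot)$ via the representation $G_D(x,y)=G(x-y)-\EE^y G(x-X_{\tau_D})$, arguing that each piece on the right satisfies MVP. You instead use the sweep identity $G_D(x,y)=G_B(x,y)+\EE^y G_D(x,X_{\tau_B})$ for $B\Subset D$ directly. Both routes rest on the same two ingredients---the strong Markov property and the vanishing of $G_B(x,\cdot)$ when $x\notin\overline{B}$---so the arguments are equivalent; your version has the minor advantage of bypassing the compensated potential needed to define $G$ in the recurrent case, while the paper's version makes the MVP of $G$ itself explicit (which is reused elsewhere). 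One small correction: in your step~(3) the fact that $\PP^{y_0}(X_{\tau_B}\in\partial B)=0$ comes from $B$ being a ball (Blumenthal's $0$--$1$ law and the absolute continuity of $\nu$), not from $D$ being $C^{1,1}$; the $C^{1,1}$ regularity of $D$ is only needed for the ``MVP in $D\setminus B(x,\varepsilon)$'' statement, exactly as you indicate in step~(5).
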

\begin{proof}
Let $B$ be any $C^{1,1}$ bounded domain. Since $G_B(x,y)= 0 $ if $x\in \overline{B}^c$ we have by   \eqref{eq:Green_pot} that 
$$G(x-y)=\EE^y  G(x-X_{\tau_B}), \quad x\in B.$$
That is $G$ satisfies MVP inside $\Rd\setminus\{0\}$. This and the strong Markov property implies that $G_D(x,\cdot)$ satisfies MVP inside in $D\setminus\{x\}$. 
By the strong Markov property and the MVP property of $G$, $y \mapsto \EE^y(G(x-X_{\tau_D}))$ satisfies MVP in $D\setminus B(x,\varepsilon)$. It yields that $y \mapsto G_D(x,y)$ satisfies MVP in $D\setminus B(x,\varepsilon)$.  
\end{proof}

\begin{lemma}\label{lem:upperestgrG}
Assume \eqref{eq:gradG} holds. Let $0<a<1$. Then,  there is a constant $C$ such that, for  $r\leq 1$ and $|x|<ar$,
$$|\nabla_x G_{B_r}(x,y)| \leq C \frac{V^2(|x-y|)}{|x-y|^{d+1}}.$$
\end{lemma}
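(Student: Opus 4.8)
The plan is to differentiate the potential representation of the Green function of the ball. Since $B_r$ is a bounded $C^{1,1}$ set it has the outer cone property, so \eqref{eq:Green_pot} gives
$$G_{B_r}(x,y)=G(x-y)-\E^y G\bigl(x-X_{\tau_{B_r}}\bigr),\qquad x,y\in\Rd.$$
If $y\notin B_r$ then $p_{B_r}(t,x,y)=0$ for all $x,t$, hence $G_{B_r}(\cdot,y)\equiv 0$ and the inequality is trivial; so we may assume $y\in B_r$, $y\ne x$, and then $|x-y|<|x|+|y|<(1+a)r\le 2$. The geometric fact driving everything is that $|X_{\tau_{B_r}}|\ge r$ while $|x|<ar$, so $|x-X_{\tau_{B_r}}|\ge(1-a)r>0$ almost surely (recall $\E^y\tau_{B_r}<\infty$ by \eqref{eq:ExtauDest}); thus along the relevant paths $x$ stays at a fixed positive distance from the singularity of $z\mapsto G(x-z)$.

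Next I would differentiate under the expectation. By assumption \eqref{eq:gradG} holds and, as in Lemma \ref{lem:gradG}, $G\in C^1(\Rd\setminus\{0\})$; in particular, for $|h|<(1-a)r/2$ the gradient $\nabla G(x+he_i-X_{\tau_{B_r}})$ is bounded by a finite constant, uniformly in the random value of $X_{\tau_{B_r}}$. Writing $G(x+he_i-X_{\tau_{B_r}})-G(x-X_{\tau_{B_r}})=\int_0^h\partial_i G(x+se_i-X_{\tau_{B_r}})\pd s$ and letting $h\to 0$ with dominated convergence (the measure $\PP^y$ is finite) yields
$$\nabla_x G_{B_r}(x,y)=\nabla G(x-y)-\E^y\nabla G\bigl(x-X_{\tau_{B_r}}\bigr).$$

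It then remains to estimate the two terms by \eqref{eq:gradG}. For the first, $|\nabla G(x-y)|\le\CIIId\,V^2(|x-y|\wedge1)/(|x-y|\wedge1)^{d+1}\le 2^{d+1}\CIIId\,V^2(|x-y|)/|x-y|^{d+1}$ (trivially if $|x-y|\le 1$, and for $1<|x-y|\le 2$ since $V$ is non-decreasing). For the second, I would first note that $s\mapsto V^2(s\wedge1)/(s\wedge1)^{d+1}$ is non-increasing: for $\lambda>1$, \eqref{subadd} gives $V^2(\lambda s)/(\lambda s)^{d+1}\le\lambda^{1-d}V^2(s)/s^{d+1}\le V^2(s)/s^{d+1}$, and the map is constant on $[1,\infty)$; since $|x-X_{\tau_{B_r}}|\ge(1-a)r$ and $(1-a)r\le 1$, this gives $|\nabla G(x-X_{\tau_{B_r}})|\le\CIIId\,V^2((1-a)r)/((1-a)r)^{d+1}$. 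Finally I would bound the last expression by a constant (depending on $a$ and $d$) times $V^2(|x-y|)/|x-y|^{d+1}$: if $|x-y|\le(1-a)r$ this is the monotonicity just used; if $(1-a)r<|x-y|<(1+a)r$, then $V((1-a)r)\le V(|x-y|)$ while $|x-y|^{d+1}<\bigl(\tfrac{1+a}{1-a}\bigr)^{d+1}((1-a)r)^{d+1}$, giving the bound with constant $\bigl(\tfrac{1+a}{1-a}\bigr)^{d+1}$. Adding the two contributions yields the assertion with $C=\CIIId\bigl(2^{d+1}+(\tfrac{1+a}{1-a})^{d+1}\bigr)$. The argument is short, and the only place requiring a little care is this final comparison — the correction term is a priori controlled only in terms of the scale $(1-a)r$ of the ball rather than of $|x-y|$, so one splits into the regimes $|x-y|\le(1-a)r$ (monotonicity) and $|x-y|>(1-a)r$ (where $|x-y|$ and $r$ are comparable).
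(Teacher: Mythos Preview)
Your proof is correct and follows essentially the same approach as the paper: differentiate the representation \eqref{eq:Green_pot} under the expectation via dominated convergence, then bound each term using \eqref{eq:gradG} together with the monotonicity of $s\mapsto V^2(s)/s^{d+1}$ and the observation $|x-X_{\tau_{B_r}}|\ge(1-a)r$. You have simply made the final comparison between $(1-a)r$ and $|x-y|$ (and the constants) more explicit than the paper does.
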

\begin{proof}
By Lemma \ref{lem:gradG} and the dominated convergence theorem
$$\nabla_x G_{B_r}(x,y)=\nabla_x G(x-y)-\EE^y \nabla_x G(x-X_{\tau_{B_r}}).$$
Furthermore, Lemma \ref{lem:gradG}, \eqref{subadd} and the monotonicity of $s\mapsto V^2(s)/s^{d+1}$ implies
\begin{align*}
|\nabla_x G_{B_r}(x,y)|&\leq \CIIId \frac{V^2(|x-y|\wedge1)}{(|x-y|\wedge1)^{d+1}}+\CIIId \EE^y \frac{V^2(|x-X_{\tau_{B_r}}|\wedge1)}{(|x-X_{\tau_{B_r}}|\wedge1)^{d+1}}\\
&\leq c\frac{V^2(|x-y|)}{|x-y|^{d+1}}+\CIIId \frac{V^2((1-a)r)}{((1-a)r)^{d+1}}\\
&\leq c\frac{V^2(|x-y|)}{|x-y|^{d+1}}.
\end{align*}
\end{proof}

\begin{lemma}\label{lem:lowerestG}
Assume that $\psi^* \in \WLSC{\la}{1}{\lC}$ with $\la>0$.  Then there exists $C>0$ and $0<\kappa<1$ such that, for any $r<1$,
$$G_{B_r}(x,y)\geq C \frac{V^2(|x-y|)}{|x-y|^d}, \quad |x|,|y|\leq \kappa r.$$
\end{lemma}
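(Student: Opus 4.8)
The plan is to derive the lower bound for $G_{B_r}(x,y)$ from the potential-kernel representation \eqref{eq:Green_pot}, namely
$$G_{B_r}(x,y) = G(x-y) - \EE^y G(x - X_{\tau_{B_r}}),$$
by showing that the subtracted term is at most half of the leading term $G(x-y)$ when both points are deep inside the ball. First I would establish a lower bound $G(z) \geq c\, V^2(|z|)/|z|^d$ for small $|z|$: this follows by integrating the on-diagonal/near-diagonal lower heat kernel bounds $p_t(z) \gtrsim V^{-1}(\sqrt t)^{-d}$ for $|z| \leq V^{-1}(\sqrt t)$ over $t$ from $0$ to $V^2(|z|)$ (these are the standard estimates from \cite{TGKS2020} already invoked above, and $\psi^*\in\WLSC{\la}{1}{\lC}$ is exactly what guarantees their validity in the range $r<1$). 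For the matching behaviour at the top of this range I would use that $s\mapsto V^2(s)/s^d$ together with scaling \eqref{subadd} and \eqref{scalV2} controls the integral, giving $G(x-y)\gtrsim V^2(|x-y|)/|x-y|^d$ whenever $|x-y|\le 2\kappa r < 1$.

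Next I would bound the harmonic term from above. For $|x|,|y|\le \kappa r$ we have $|x-X_{\tau_{B_r}}| \ge (1-\kappa)r$ on the event that $X$ exits, so I need an upper bound for $G(w)$ when $|w|\ge(1-\kappa)r$. From Lemma \ref{lem:gradG} (integrating $|\nabla G|$, or more directly from the heat kernel upper bounds and the same \cite{TGKS2020} estimates) one gets $G(w) \le C\, V^2(|w|\wedge 1)/(|w|\wedge 1)^d$ for all $w$; combined with the monotonicity of $s\mapsto V^2(s)/s^d$ on $(0,1]$ this yields
$$\EE^y G(x-X_{\tau_{B_r}}) \le C \frac{V^2((1-\kappa)r)}{((1-\kappa)r)^d}.$$
Then I would compare the two sides: using $|x-y| \le 2\kappa r$ and the scaling $V^2(\lambda s)/( \lambda s)^d \le c\,\lambda^{\beta-d}\, V^2(s)/s^d$ for $\lambda\ge 1$ (from \eqref{scall}/\eqref{scalV2}, applied in the enlarged range via the remark), the upper bound $C V^2((1-\kappa)r)/((1-\kappa)r)^d$ is at most $\tfrac12\, c\, V^2(|x-y|)/|x-y|^d$ once $\kappa$ is chosen small enough depending only on $d,\psi$ — because $(1-\kappa)r/|x-y| \ge (1-\kappa)/(2\kappa) \to \infty$ as $\kappa\to 0$, and $\beta - d < 0$ makes that ratio factor small. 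Subtracting gives $G_{B_r}(x,y)\ge \tfrac12 G(x-y) \ge C' V^2(|x-y|)/|x-y|^d$.

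The main obstacle I anticipate is making the two heat-kernel-derived estimates (the lower bound on $G(z)$ for small $z$ and the global upper bound on $G$) genuinely uniform in the ball radius $r\in(0,1)$ and mutually compatible in their constants, so that a single choice of $\kappa$ works for all such $r$. This is where the weak scaling hypothesis $\psi^*\in\WLSC{\la}{1}{\lC}$ and the sharp heat kernel bounds of \cite{TGKS2020} do the real work: one must check that the implied constants in $p_t(z)\approx V^{-1}(\sqrt t)^{-d}$ on the relevant space-time region depend only on $d$ and $\psi$, and that the ranges of validity of \eqref{scalV2}, \eqref{scall} can be dilated to cover $(0,2\kappa r)$ and $((1-\kappa)r,\infty)$ as needed (the remark after \eqref{skV1g} is precisely what licenses this, at the cost of constants absorbed into $C$). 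Once the uniformity is in hand, the subtraction argument is routine.
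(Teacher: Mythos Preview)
Your subtraction argument has a genuine gap in low dimensions. The key comparison step claims that by choosing $\kappa$ small one can force
\[
C\,\frac{V^2((1-\kappa)r)}{((1-\kappa)r)^d}\ \le\ \tfrac12\,c\,\frac{V^2(|x-y|)}{|x-y|^d},
\]
and you justify this via a scaling inequality ``$V^2(\lambda s)/(\lambda s)^d \le c\,\lambda^{\beta-d}\,V^2(s)/s^d$ with $\beta-d<0$''. But WLSC only gives the \emph{lower} bound $V^2(\lambda s)/(\lambda s)^d \ge c^{-1}\lambda^{\alpha-d}V^2(s)/s^d$; the only universal \emph{upper} bound available is $V^2(\lambda s)/(\lambda s)^d \le \lambda^{2-d}V^2(s)/s^d$ from \eqref{subadd}, which is useless for $d\le 2$. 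Concretely, for the $\alpha$-stable process in $d=1$ with $\alpha>1$ one has $V^2(s)/s = s^{\alpha-1}$, which is \emph{increasing}: the quantity at scale $(1-\kappa)r$ is \emph{larger} than at scale $|x-y|\le 2\kappa r$, and no choice of $\kappa$ repairs this. Your invocation of \eqref{scall} also requires $\theta=0$, which the lemma does not assume.

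More fundamentally, the lemma's hypotheses do not guarantee transience, and in the recurrent case ($d=1,2$) the potential $G$ is the compensated one $\int_0^\infty(p_t(x)-p_t(\mathbf{1}))\,dt$. Then for $|x-y|$ and $|x-X_{\tau_{B_r}}|$ both small, $G(x-y)$ and $\EE^y G(x-X_{\tau_{B_r}})$ are each close to the same constant (in the stable example, both are $\approx c$), so ``the subtracted term is at most half of $G(x-y)$'' is simply false---the Green function arises as a small difference of two nearly equal quantities. The claimed pointwise upper bound $G(w)\le C\,V^2(|w|\wedge 1)/(|w|\wedge 1)^d$ also fails for the compensated potential (which can be negative). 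The paper sidesteps all of this by working directly with the \emph{killed} heat kernel: it uses the Hunt formula and the two-sided bounds on $p_t$ to obtain $p_{B_r}(t,x,y)\ge \tfrac{c}{[V^{-1}(\sqrt t)]^d}$ on a time interval $[V^2(ar),\lambda V^2(br)]$, then integrates in $t$. This yields $G_{B_r}(x,y)\gtrsim V^2(r)/r^d$ for $x,y$ deep inside and $|x-y|$ small, from which the stated bound follows by applying the estimate on a smaller ball of radius comparable to $|x-y|$---an argument that is insensitive to recurrence and to the sign of $\alpha-d$.
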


\begin{remark}
If the measure $\nu$ is unimodal, i.e. its density is radial and nonincreasing, due to \cite[Theorem 1.3]{TGMK2018} and \cite[Lemma 2.3]{TGKS2020} the above inequality is equivalent to the lower scaling condition.
\end{remark}
\begin{proof}[Proof of Lemma \ref{lem:lowerestG}]

\cite[Theorem 5.2]{TGKS2021} gives us, for $t<1$ and $x\in\Rd$,
$$p_t(x)\leq c_1 t\frac{K(|x|)}{|x|^d}.$$
Furthermore,  by \cite[Corollary 5.5]{TGKS2020}, for $t\leq 1$,
$$p_t(x)\geq \frac{c_2}{[V^{-1}(\sqrt{t})]^d}, \quad V^2(|x|)\leq t.$$
Let $\lambda=\frac{c_2}{2c_1}<1$.  By \eqref{eq:Huntp}, monotonicity of $s\mapsto\frac{K(s)}{s^d}$ and the inequalities above, for $V^2(|x-y|)\leq t\leq \lambda V^2((\delta_x\vee\delta_y)$ we get
\begin{align*}
p_{B_r}(t,x,y)&\geq \frac{c_2}{[V^{-1}(\sqrt{t})]^d}- c_1 t\frac{K(\delta_x\vee\delta_y)}{(\delta_x\vee\delta_y)^d}\\
&\geq  \frac{c_2}{[V^{-1}(\sqrt{t})]^d}- c_1 t\frac{K(V^{-1}(\sqrt{t/\lambda}))}{(V^{-1}\sqrt{t/\lambda}))^d}\\
&\geq  \frac{c_2}{[V^{-1}(\sqrt{t})]^d}- c_1 t\frac{1}{V^2(V^{-1}(\sqrt{t/\lambda}))(V^{-1}\sqrt{t/\lambda}))^d}\\
&\geq\frac{c_2}{2[V^{-1}(\sqrt{t})]^d}. 
\end{align*}
Let $a<b<1$. Then, for $|x-y|\leq a r$ and $|x|,|y|\leq (1-b)r$, we have
$$G_{B_r}(x,y)\geq \int^{\lambda V^2(br)}_{V^2(ar)}\frac{c_2}{2[V^{-1}(\sqrt{t})]^d}\pd{t}\geq \frac{c_2(\lambda V^2(br)-V^2(ar))}{2(br)^d}.$$
Now it is enough to take $\tfrac{a}{b}$ small enough, such that 
$\frac{V^2(br)}{V^2(ar)}\geq (2\lambda)^{-1}$
to get 
\begin{align}\label{eq:lowerestGw}
G_{B_r}(x,y)\geq \frac{c}{b^d}\frac{V^2(br)}{r^d}\geq  \frac{c}{b^{d+2}}\frac{V^2(r)}{r^d}. 
\end{align}
Now, for given $x,y \in B_{\kappa r}$ we put $r_0 = |x-y|/2$. Eventually, by \eqref{subadd}, we get
\begin{align*}
G_{B_r}(x,y) \ge G_{B_{r_0}}(x,y) \ge  \frac{c}{b^{d+2}} \frac{V^2(r_0)}{r_0^d} \ge \frac{c2^{d-1}}{b^{d+2}} \frac{V^2(|x-y|)}{|x-y|^d}.
\end{align*}

\end{proof}

\begin{lem}\label{GradGreen1} Assume \eqref{eq:gradG}.
	Let $r\leq1$. Then, for  $f\in L_\infty(B_r)$, 
	\begin{equation}
    	\nabla_x\int_{B_r}  G_{B_r}(x,z)f(z)\pd{z} = \int_{B_r} \nabla_x G_{B_r}(x,z)f(z)\pd{z},\quad |x|<r.
	\end{equation}
\end{lem}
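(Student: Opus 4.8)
I would prove the lemma by differentiating $v(x):=\int_{B_r}G_{B_r}(x,z)f(z)\pd z$ under the integral sign, isolating the diagonal singularity by a cutoff. Concretely, the plan is to show that for every $x_0\in B_r$ and every $i\in\{1,\dots,d\}$ the partial derivative $\partial_{x_i}v$ exists at $x_0$ and equals $\int_{B_r}\partial_{x_i}G_{B_r}(x_0,z)f(z)\pd z$; note $v$ is finite since $|v(x)|\le\|f\|_\infty\int_{B_r}G_{B_r}(x,z)\pd z=\|f\|_\infty\,\E^x\tau_{B_r}<\infty$ by \eqref{eq:ExtauDest}.

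\emph{Step 1: a gradient bound uniform near $x_0$, valid for all $z$} — a version of Lemma~\ref{lem:upperestgrG} for an arbitrary $x_0\in B_r$, not just $x_0$ near the centre. Put $\rho:=\tfrac12\big((r-|x_0|)\wedge 1\big)$, so $B(x_0,\rho)\subset B_r$ and $r-|x|\ge\delta_0:=\tfrac12(r-|x_0|)$ for $x\in B(x_0,\rho)$. By \eqref{eq:Green_pot} (applicable, as $B_r$ is bounded with the outer cone property) $G_{B_r}(x,z)=G(x-z)-\E^z G(x-X_{\tau_{B_r}})$. Differentiating in $x\in B(x_0,\rho)$: $G$ is $C^1$ off the origin (Lemma~\ref{lem:gradG} together with the estimates in the proof of Lemma~\ref{lem:gradP}); and since $X_{\tau_{B_r}}\notin B_r$ we have $|x-X_{\tau_{B_r}}|\ge r-|x|\ge\delta_0$, so by \eqref{eq:gradG} and the monotonicity of $s\mapsto V^2(s\wedge 1)/(s\wedge 1)^{d+1}$ the integrand $\nabla G(x-X_{\tau_{B_r}})$ is bounded by a constant on $B(x_0,\rho)$ and one differentiates under $\E^z$ by dominated convergence. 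Hence, for $x\in B(x_0,\rho)$ and $z\neq x$,
\[
\nabla_x G_{B_r}(x,z)=\nabla G(x-z)-\E^z\!\big[\nabla G(x-X_{\tau_{B_r}})\big],\qquad |\nabla_x G_{B_r}(x,z)|\le\CIIId\,\frac{V^2(|x-z|\wedge 1)}{(|x-z|\wedge 1)^{d+1}}+C_1,
\]
with $C_1=C_1(x_0,r)$. Under the standing assumption $\psi^*\in\WLSC{\la}{\lt}{\lC}$ with $\la>1$, \eqref{scalV2} (extended to bounded intervals) gives $V^2(s)\le\sigma s^{\la}$ for $s$ bounded, equivalently $\int_0^1 V^2(s)\,s^{-2}\pd s<\infty$; thus $z\mapsto V^2(|x-z|\wedge 1)/(|x-z|\wedge 1)^{d+1}$ is integrable over bounded sets, and in particular the right-hand side of the claimed identity converges absolutely.

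\emph{Step 2: the cutoff.} For $0<\eps<\rho$ write $v=v^\eps+w^\eps$ with $v^\eps(x)=\int_{B_r\setminus B(x_0,\eps)}G_{B_r}(x,z)f(z)\pd z$ and $w^\eps(x)=\int_{B(x_0,\eps)}G_{B_r}(x,z)f(z)\pd z$. For $v^\eps$: if $|x-x_0|<\eps/2$ and $z\notin B(x_0,\eps)$ then $|x-z|\ge\eps/2$, so the Step~1 bound is $\le C_\eps$ uniformly in such $x,z$, and dominated convergence gives that $v^\eps$ is differentiable at $x_0$ with $\partial_{x_i}v^\eps(x_0)=\int_{B_r\setminus B(x_0,\eps)}\partial_{x_i}G_{B_r}(x_0,z)f(z)\pd z$. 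For $w^\eps$ (take $h>0$; $h<0$ is identical): since $\nabla_x G_{B_r}(\cdot,z)$ is locally integrable along the segment $[x_0,x_0+he_i]\subset B(x_0,\rho)$ (using $\la>1$; for $d=1$ one also uses that $G$ is continuous on $\R$), the fundamental theorem of calculus and Tonelli give, for $0<h<\eps$,
\[
\int_{B(x_0,\eps)}\!\big|G_{B_r}(x_0+he_i,z)-G_{B_r}(x_0,z)\big|\pd z\le\int_0^h\!\Big(\int_{B(x_0,\eps)}\!|\nabla_x G_{B_r}(x_0+se_i,z)|\pd z\Big)\pd s\le h\,\omega(\eps),
\]
where $\omega(\eps):=\CIIId\int_{B(0,2\eps)}V^2(|w|\wedge 1)(|w|\wedge 1)^{-(d+1)}\pd w+C_1|B(0,\eps)|\to 0$ as $\eps\to 0$ (again since $\la>1$). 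Hence $\big|\tfrac1h(w^\eps(x_0+he_i)-w^\eps(x_0))\big|\le\|f\|_\infty\,\omega(\eps)$ for all small $h>0$.

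\emph{Step 3: conclusion.} Combining the two estimates of Step~2, for each $0<\eps<\rho$,
\[
\limsup_{h\to 0}\Big|\tfrac1h\big(v(x_0+he_i)-v(x_0)\big)-\int_{B_r\setminus B(x_0,\eps)}\partial_{x_i}G_{B_r}(x_0,z)f(z)\pd z\Big|\le\|f\|_\infty\,\omega(\eps),
\]
and letting $\eps\to 0$ — the integral converging to $\int_{B_r}\partial_{x_i}G_{B_r}(x_0,z)f(z)\pd z$ by dominated convergence with the $\eps$-independent majorant of Step~1 — shows that $\partial_{x_i}v(x_0)$ exists and has the asserted value. Ranging over $x_0\in B_r$ and $i$ proves the lemma. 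The main obstacle is precisely that the naive dominating function $\sup_{x\in B(x_0,\rho)}|\nabla_x G_{B_r}(x,z)|$ fails to be integrable near $z=x_0$, which is what forces the cutoff; controlling the excised mass $\omega(\eps)$ is exactly where the hypothesis $\la>1$ (i.e.\ $\int_0^1 V^2(s)s^{-2}\pd s<\infty$) is used.
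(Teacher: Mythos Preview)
Your proof is correct. Both your argument and the paper's rest on the gradient bound of Lemma~\ref{lem:upperestgrG}; your Step~1 essentially rederives it, which is unnecessary --- the lemma already applies for any $a<1$, so the choice $a=(r+|x_0|)/(2r)$ covers a full neighbourhood of any $x_0\in B_r$ and yields your bound directly (the additive constant $C_1$ is absorbed by monotonicity of $s\mapsto V^2(s)/s^{d+1}$, since $|x-z|\le 2r$).

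The difference is in how the diagonal singularity is handled. You excise $B(x_0,\eps)$ and show the near-diagonal contribution to the difference quotient is at most $\|f\|_\infty\omega(\eps)\to0$. The paper instead bounds the difference quotient by $c\int_0^1 V^2(|x+sh_i-z|)\,|x+sh_i-z|^{-(d+1)}\pd s$ and observes that, because $r\mapsto V^2(r)/r^{d+1}$ is nonincreasing, the Hardy--Littlewood rearrangement inequality together with \eqref{eq:V2est} gives uniform integrability in $h$, so the limit passes directly. Your cutoff is more explicit and self-contained; the paper's rearrangement argument is shorter but relies on spotting that radial monotonicity of the majorant is enough. Both reduce to the same integrability $\int_0^\rho V^2(s)s^{-2}\pd s<\infty$, which is exactly where the hypothesis $\la>1$ is used.
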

\begin{proof}
First, note that by \eqref{scalV2} for any $0<\rho\le \ss$, we have
\begin{align}\label{eq:V2est}
	\int_{B_{\rho}}\frac{V^2(|y|)}{|y|^{d+1}}\pd{y} = c_1\int_0^{\rho} \frac{V^2(s)}{s^2} \pd{s}
	\leq c_2\frac{V^2(\rho)}{\rho}.
\end{align}
for some constant $c_2$ depending on $\ss$. 
Denote by $e_1,\ldots,e_d$ the standard basis in $\R^d$. We fix $x \in B_r$ and 
let $0<h<\dex/2$ and $h_i=he_i\in\Rd$.
By Lemma \ref{lem:upperestgrG} with $a=(r+|x|)/2$
\begin{align*}
	\frac{| G_{B_r}(x+h_i,z)- G_{B_r}(x,z)|}{h} 
	&= \frac{1}{h}\left|\int_0^1\frac{\pd{}}{\pd{s}} G_{B_r}(x+s h_i,z) \pd{s} \right| \\
	=  \left|\int_0^1\frac{\partial}{\partial x_i} G_{B_r}(x+s h_i,z) \pd{s} \right|  
	& \leq  c\int_0^1\frac{V^2(|x+s h_i-z|)}{|x+s h_i-z|^{d+1}} \pd{s}.
\end{align*}
Since $f$ is bounded and   $r\mapsto \frac{V^2(r)}{r^{d+1}}$ is nonincreasing, the rearrangement inequality implies uniformly in $h$ integrability of $z\mapsto \frac{G_D(x+h_i,z)- G_D(x,z)}{h}f(z) $ on $B_r$, thus by \eqref{eq:V2est} we get the assertion.
\end{proof}

As in Lemma \ref{lem:harmG}, we prove the MVP property for $\nabla_x G_D(x,\cdot)$.
\begin{lemma}\label{lem:harmgrG}
Let $D$ be a $\cjj$ domain. For every $x \in D$, $\nabla G_D(x,y)$ satisfies MVP inside $D\setminus\{x\}$ and satisfies MVP in $D \setminus B(x,\varepsilon)$, where $0 < \varepsilon<\delta_x/2$. 
\end{lemma}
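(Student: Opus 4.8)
The plan is to follow the proof of Lemma~\ref{lem:harmG} with $G$ replaced by its gradient $\nabla_x G$. Two extra points have to be checked: that $y\mapsto\nabla_x G(x-y)$ still satisfies MVP inside $\Rd\setminus\{x\}$, and that the $x$-differentiation may be carried inside the expectation in \eqref{eq:Green_pot}, i.e.
\begin{equation*}
	\nabla_x G_D(x,y)=\nabla_x G(x-y)-\EE^{y}\,\nabla_x G(x-X_{\tau_D}),\qquad y\in D\setminus\{x\}.
\end{equation*}
The new input, absent in Lemma~\ref{lem:harmG}, is the gradient estimate \eqref{eq:gradG} of Lemma~\ref{lem:gradG}, which is what makes these differentiations legitimate.

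\emph{Step 1.} I would first show that $y\mapsto\nabla_x G(x-y)$ satisfies MVP inside $\Rd\setminus\{x\}$. Fix $x$ and a bounded $\cjj$ domain $B$ with $x\in\overline{B}^{c}$; then $G_B(x,\cdot)\equiv0$, so \eqref{eq:Green_pot} (valid for all $y$ by the outer cone property of $\cjj$ sets) gives $G(x-y)=\EE^{y}G(x-X_{\tau_B})$, and likewise $G(x+he_i-y)=\EE^{y}G(x+he_i-X_{\tau_B})$ for $|h|$ small. Forming difference quotients and letting $h\to0$, the pointwise limit of the integrand is $\partial_{x_i}G(x-X_{\tau_B})$ (a.s., since $\PP^{y}(X_{\tau_B}=x)=0$); the realizations in which $X_{\tau_B}$ falls close to $x$ are controlled using the local integrability near the origin of the gradient majorant $w\mapsto V^{2}(|w|\wedge1)/(|w|\wedge1)^{d+1}$ (cf.\ \eqref{eq:V2est}) together with the boundedness of the exit density away from $x$ (cf.\ \eqref{eq:PoissonUp}). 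This yields $\nabla_x G(x-y)=\EE^{y}\nabla_x G(x-X_{\tau_B})$, and then, by the strong Markov property at $\tau_U$ exactly as in Lemma~\ref{lem:harmG}, the MVP identity for $y\mapsto\nabla_x G(x-y)$ in every open $U$ relatively compact in $\Rd\setminus\{x\}$.

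\emph{Step 2.} Next I would establish the displayed representation for $\nabla_x G_D$. Fix $y\in D\setminus\{x\}$ and $0<|h|<\delta_x/2$; then $x+he_i\in D$, and since $X_{\tau_D}\in D^{c}$ one has $|x+she_i-X_{\tau_D}|\ge\delta_x/2$ for $0\le s\le1$, so by \eqref{eq:gradG} the difference quotient $h^{-1}\bigl(G(x+he_i-X_{\tau_D})-G(x-X_{\tau_D})\bigr)$ is bounded, uniformly in $h$, by $\sup_{|w|\ge\delta_x/2}|\nabla G(w)|<\infty$, while the term coming from $G(x+he_i-y)$ is handled by the continuity of $\nabla G$ near $x-y\neq0$; dominated convergence gives the formula. (This is where the restriction $\varepsilon<\delta_x/2$ comes from: it keeps the perturbed argument at distance at least $\delta_x/2$ from the singularity of $\nabla G$.)

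\emph{Step 3.} I would then conclude as in Lemma~\ref{lem:harmG}. By Step~1 the first term $y\mapsto\nabla_x G(x-y)$ satisfies MVP inside $\Rd\setminus\{x\}$, hence inside $D\setminus\{x\}$ and in $D\setminus B(x,\varepsilon)$; and since $|x-X_{\tau_D}|\ge\delta_x$ makes $\EE^{y}|\nabla_x G(x-X_{\tau_D})|$ finite, the strong Markov property together with the MVP of $\nabla_x G(x-\cdot)$ from Step~1 shows that $y\mapsto\EE^{y}\nabla_x G(x-X_{\tau_D})$ satisfies MVP inside $D\setminus\{x\}$ and in $D\setminus B(x,\varepsilon)$. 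Subtracting, $y\mapsto\nabla_x G_D(x,y)$ enjoys both properties. The main obstacle is the differentiation under the expectation --- especially, in Step~1, the control of the event that the process exits near $x$, which rests on the integrability estimate \eqref{eq:V2est} for the gradient majorant and on the Poisson kernel bound \eqref{eq:PoissonUp}; once the representation of $\nabla_x G_D$ is in hand, the remaining MVP bookkeeping (finiteness of $P_U[|\cdot|]$ and the strong Markov step) is identical to that of Lemma~\ref{lem:harmG}.
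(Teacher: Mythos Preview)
Your proposal is correct and follows exactly the approach the paper intends: the paper gives no proof for this lemma beyond the one-line remark ``As in Lemma~\ref{lem:harmG}, we prove the MVP property for $\nabla_x G_D(x,\cdot)$,'' and your three steps are precisely the natural elaboration of that remark. Note that the representation in your Step~2 is already established in the paper (for balls) in the proof of Lemma~\ref{lem:upperestgrG}, so you may simply cite that argument verbatim for general $C^{1,1}$ domains $D$, since the only fact used there is $|x-X_{\tau_D}|\ge\delta_x$.
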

\begin{lem}\label{lem:nablaGP1}
	Let $0<a<b<1$, $x \in B_{ar}$, $y \in B_r \setminus B_{br}$.	Then,
	$$
		\nabla_xG_{B_r}(x,y) = \int_{B_r \setminus B_{br}} \nabla_xG_{B_r}(x,z)P_{B_r \setminus B_{br}}(y,z)\pd{z}.
	$$
\end{lem}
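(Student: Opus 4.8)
The plan is to read the asserted identity as the mean value property of the function $y\mapsto\nabla_xG_{B_r}(x,y)$ (the pole $x$ kept fixed) on the annulus $A:=B_r\setminus\overline{B_{br}}$. Granting that $\nabla_xG_{B_r}(x,\cdot)$ satisfies MVP in $A$, we get $\nabla_xG_{B_r}(x,y)=\E^y[\nabla_xG_{B_r}(x,X_{\tau_A})]$ for $y\in A$; since $A$ is a bounded $C^{1,1}$ domain, $X_{\tau_A}\in B_{br}\cup\overline{B_r}^c$ a.s., and since $G_{B_r}(x,z)=0$ for all $z\in B_r^c$ (so $\nabla_xG_{B_r}(x,\cdot)\equiv0$ there), only the landing in the inner ball $B_{br}$ contributes. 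By the Ikeda--Watanabe formula \eqref{eq:Poisson} applied to $A$, this gives $\nabla_xG_{B_r}(x,y)=\int_{B_{br}}\nabla_xG_{B_r}(x,z)\,P_{A}(y,z)\,\pd{z}$, i.e.\ the asserted equality (the surviving integration set being the inner ball $B_{br}$). So the whole matter reduces to establishing MVP of $\nabla_xG_{B_r}(x,\cdot)$ on the full annulus $A$, right up to the outer sphere $\partial B_r$.

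For that I would exploit the geometry $|x|<ar<br$: the pole $x$ lies in the hole $B_{br}$, so $x\notin\overline A$ and $\operatorname{dist}(x,\overline A)\ge(b-a)r>0$. Set $\varepsilon:=(b-a)r/2$; using $|x|<ar$ and $a<b<1$ one checks in a line that $0<\varepsilon<\tfrac12\delta_x^{B_r}$ and $B(x,\varepsilon)\subset B_{br}$, so that $A\subset W:=B_r\setminus B(x,\varepsilon)$. By Lemma~\ref{lem:harmgrG} applied with $D=B_r$, $\nabla_xG_{B_r}(x,\cdot)$ satisfies MVP in $W$, and I would transfer this from $W$ down to the smaller open set $A$ by the strong Markov property: for $y\in A$, since $\tau_A\le\tau_W$,
\begin{align*}
\nabla_xG_{B_r}(x,y)&=\E^y\big[\nabla_xG_{B_r}(x,X_{\tau_W})\big]\\
&=\E^y\big[\E^{X_{\tau_A}}[\nabla_xG_{B_r}(x,X_{\tau_W})]\big]=\E^y\big[\nabla_xG_{B_r}(x,X_{\tau_A})\big],
\end{align*}
using the MVP in $W$ when $X_{\tau_A}\in W$ and the identity $X_{\tau_W}=X_{\tau_A}$ when $X_{\tau_A}\in W^c$ (the event $\{X_{\tau_A}=x\}$ is null, as the exit law has a bounded density near $x$). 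An equivalent route is to write $\nabla_xG_{B_r}(x,y)=\nabla_xG(x-y)-\E^y[\nabla_xG(x-X_{\tau_{B_r}})]$ for $y\in A$ via \eqref{eq:Green_pot} and treat the two terms separately: the first satisfies MVP on $A$ since $\nabla G$ satisfies MVP off its pole (as in the proof of Lemma~\ref{lem:harmG}) and $\overline A$ is compact with $x\notin\overline A$, the second by the strong Markov property since $A\subset B_r$.

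It remains to check the integrability hypothesis $\E^y|\nabla_xG_{B_r}(x,X_{\tau_A})|<\infty$ built into the definition of MVP. I would split $B_{br}$ into $B(x,\rho_0)$ with $\rho_0:=(b-a)r/2$ and its remainder. On $B(x,\rho_0)$ one has $\delta_z^A\ge br-|z|\ge\rho_0$, hence $P_A(y,z)\le CV^2(\diam A)\,g(\rho_0)$ by \eqref{eq:PoissonUp}; combined with $|\nabla_xG_{B_r}(x,z)|\le CV^2(|x-z|)/|x-z|^{d+1}$ from Lemma~\ref{lem:upperestgrG} and the integral bound \eqref{eq:V2est}, this piece is finite. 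On the remainder, $\nabla_xG_{B_r}(x,\cdot)$ is bounded (Lemma~\ref{lem:upperestgrG} and the monotonicity of $s\mapsto V^2(s)/s^{d+1}$), so that piece is at most $\big(\sup_{|x-z|\ge\rho_0}|\nabla_xG_{B_r}(x,z)|\big)\int_{B_{br}}P_A(y,z)\,\pd{z}<\infty$, as $P_A(y,\cdot)$ has total mass at most $1$.

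I expect the one genuinely delicate point to be precisely that MVP of $\nabla_xG_{B_r}(x,\cdot)$ persists on the whole annulus $A$ — up to $\partial B_r$ — and not merely on sets relatively compact in $B_r\setminus\{x\}$; the ``MVP inside $B_r\setminus\{x\}$'' part of Lemma~\ref{lem:harmgrG} is not directly usable because $\overline A$ touches $\partial B_r$, and the $W$-to-$A$ transfer above (equivalently, the decomposition of $\nabla_xG_{B_r}(x,\cdot)$ into $\nabla_xG(x-\cdot)$ plus a strong-Markov term) is what bridges the gap.
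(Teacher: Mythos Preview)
Your proof is correct, but it takes a genuinely different route from the paper's. The paper starts from the mean value property of $G_{B_r}(x,\cdot)$ on the annulus $A=B_r\setminus B_{br}$ (Lemma~\ref{lem:harmG}), obtaining
\[
G_{B_r}(x,y)=\int_{B_{br}}G_{B_r}(x,z)\,P_A(y,z)\,\pd{z},
\]
and then \emph{differentiates this identity in $x$} under the integral. To justify the interchange it splits $P_A(y,\cdot)=P_1+P_2$ with $P_1=P_A\mathbf{1}_{B_{(a+b)r/2}}$: on the support of $P_1$ the Poisson kernel is bounded (by \eqref{eq:PoissonUp}), so Lemma~\ref{GradGreen1} handles that piece; on the support of $P_2$ the variable $z$ stays at distance $\ge(b-a)r/2$ from $x$, so $\nabla_xG_{B_r}(x,z)$ is bounded (Lemma~\ref{lem:upperestgrG}) and dominated convergence applies.

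You instead invoke Lemma~\ref{lem:harmgrG} directly, which already asserts MVP of $\nabla_xG_{B_r}(x,\cdot)$ on $W=B_r\setminus B(x,\varepsilon)$, and then descend from $W$ to the smaller annulus $A\subset W$ via the strong Markov property, checking the requisite integrability by the same two-region split. This is shorter and more conceptual: once Lemma~\ref{lem:harmgrG} is on the table, no further differentiation-under-the-integral argument is needed. The paper's route, by contrast, is more self-contained in that it does not rely on Lemma~\ref{lem:harmgrG} (which is stated there only with a one-line ``as in Lemma~\ref{lem:harmG}'' justification) but rederives the needed gradient identity from scratch via Lemma~\ref{GradGreen1}. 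Either argument is fine; yours makes clearer that Lemma~\ref{lem:nablaGP1} is really just an instance of the MVP for $\nabla_xG_{B_r}(x,\cdot)$ on a subdomain.
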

\begin{proof}
Let $A= B_r\setminus B_{br}$.
By Lemma \ref{lem:harmG} and \eqref{eq:Poisson}, we have
$$G_{B_r}(x,y)= \int_{B_{br}} G_{B_r}(x,z)P_{A}(y,z)\pd{z},\quad x\in B_r,\; y\in A.$$
Let us fix  $y\in A$. 
For $z \in B_{br}$ we define $P_1(y, z) = P_{A}(y, z)  \mathbbm{1}_{B_{r(a+b)/2}}(z)$ and $P_2(y,z) = P_A(y,z) - P_1(y,z)$. 
By \eqref{eq:PoissonUp}, $P_1(y,\cdot)$ is bounded. Hence, by Lemma \ref{lem:nablaGP1} we have
$$
	\nabla_x\int_{B_{br}} G_{B_r}(x,z)P_1(y,z)\pd{z} 
	= \int_{B_{br}}\nabla_x G_{B_r}(x,z) P_1(y,z)\pd{z}. 
$$
Denote by $e_1,\ldots,e_d$ the standard basis in $\R^d$. For $i \ = 1, \dots, d$ and $h \in \R\setminus \{0\}$ we denote $h_i = he_i$. 
By Lemma \ref{lem:upperestgrG} we see $\partial_{e_i} G_D(x,z)$ is bounded on the support of the function $P_2(y, \cdot)$. 
By this, mean value theorem, and dominated convergence theorem we have
\begin{align*}
	&\lim\limits_{h \rightarrow 0} \frac{1}{h}\left(\int_{B_{br}} G_{B_r}(x + h_i,z)P_2(y,z)\pd{z} - \int_{B_{br}} G_{B_r}(x,z)P_2(y,z)\pd{z}\right)\\
	= &\lim\limits_{h \rightarrow 0} \int_{B_{br}}\frac{ G_{B_r}(x + h_i,z)- G_{B_r}(x,z)}{h}P_2(y,z)\pd{z} 
	= \int_{B_{br}} \partial_{x_i} G_{B_r}(x,z)P_2(y,z)\pd{z},
\end{align*}
which completes the proof. 

\end{proof}

\section{Proof of Theorem \ref{Theorem1}}
In this chapter we focus on the gradient of functions satisfying MVP property. 
The main aim is to prove that for such functions inequality \eqref{eq:egf_t2} holds.

Let $D$ be an open set in $\Rd$, $d \in \N$ and $f$ be a nonnegative function in $\Rd$ satisfying MVP inside $D$. 
We fix $x \in D$ and $r = (\dex \land 1)/2$. 
In order to prove \eqref{eq:egf_t2}, without a loss of generality we may and do assume $x=0$.   
By Lemma \ref{lem:upperestgrG}, for every $a \in (0,1)$ there is a constant $c$ such that
\begin{equation}\label{eq:gradharm}
	|\nabla_x G_{B_r}(0,y)|
	\leq c\frac{V^2(|y|)}{|y|^{d+1}}, \qquad y\in\RR^d.
\end{equation}
Similarly, by Lemmas \ref{lem:upperestgrG} and \ref{lem:lowerestG}, there is $\kappa\in (0,1)$ such that
\begin{equation}\label{eq:grad2harm}
	|\nabla_x G_{B_r}(0,y)|
	\leq c\frac{G_{B_r}(0, y)}{|y|}, \qquad |y| \le \kappa r.
\end{equation}
Let us fix $\kappa \in(0,1)$ such that \eqref{eq:grad2harm} holds. 

\begin{lemma}\label{lem:partg}
There exist $\CII = \CII(d, \sigma)$ such that
$$
\int\limits_{B_{\rho}}|\nabla_xG_{B_\rho}(0,y)|\pd{y}
	\leq \CII\frac{V^2(\rho)}{\rho},  \qquad 0<\rho<\ss.
$$
\end{lemma}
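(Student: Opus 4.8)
The plan is to split the range of $\rho$ at the value $1$: for $\rho\le 1$ the pointwise gradient estimate of Lemma~\ref{lem:upperestgrG} applies directly, whereas for $1<\rho<\ss$ one has to fall back on the global bound for $\nabla G$ from Lemma~\ref{lem:gradG}, which is harmless because $\rho$ is then confined to a bounded interval on which $V^2(\rho)/\rho$ is bounded above and below.

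For $\rho\le 1$ I would apply Lemma~\ref{lem:upperestgrG} with $a=\tfrac12$ and centre $x=0$ (the hypothesis $|x|<a\rho$ being trivially satisfied), obtaining a constant $c$ with $|\nabla_x G_{B_\rho}(0,y)|\le c\,V^2(|y|)/|y|^{d+1}$ for all $y\in\Rd$. Integrating over $B_\rho$ and invoking the elementary estimate \eqref{eq:V2est} (established there for $0<\rho\le\ss$) yields
$$\int_{B_\rho}|\nabla_x G_{B_\rho}(0,y)|\,\pd{y}\;\le\; c\int_{B_\rho}\frac{V^2(|y|)}{|y|^{d+1}}\,\pd{y}\;\le\; c\,c_2\,\frac{V^2(\rho)}{\rho},$$
which is the asserted bound in this range.

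For $1<\rho<\ss$ I would start from the potential representation \eqref{eq:Green_pot}, $G_{B_\rho}(x,y)=G(x-y)-\EE^y G(x-X_{\tau_{B_\rho}})$, and differentiate in $x$ under the expectation at $x=0$ exactly as in the proof of Lemma~\ref{lem:upperestgrG} (the interchange being justified via Lemma~\ref{lem:gradG}), obtaining $\nabla_x G_{B_\rho}(0,y)=(\nabla G)(-y)-\EE^y(\nabla G)(-X_{\tau_{B_\rho}})$. Since $|X_{\tau_{B_\rho}}|\ge\rho>1$, the global bound \eqref{eq:gradG} (in force in this section) gives $|(\nabla G)(-X_{\tau_{B_\rho}})|\le\CIIId\,V^2(1)$, so that $|\nabla_x G_{B_\rho}(0,y)|\le\CIIId\,V^2(|y|\wedge1)/(|y|\wedge1)^{d+1}+\CIIId\,V^2(1)$ for every $y\in B_\rho$. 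Integrating over $B_\rho$, splitting the first term into its contributions over $B_1$ and over $B_\rho\setminus B_1$ and using \eqref{eq:V2est} with $\rho=1$ for the former, gives $\int_{B_\rho}|\nabla_x G_{B_\rho}(0,y)|\,\pd{y}\le c(d,\sigma)\,V^2(1)$. Finally, since $V$ is nondecreasing and $\rho\le\ss$, one has $\rho V^2(1)\le\ss V^2(\rho)$, so this last quantity is at most $\CII\,V^2(\rho)/\rho$ with $\CII=\CII(d,\sigma)$, which finishes the proof.

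The only step requiring a moment's care is that Lemma~\ref{lem:upperestgrG} is stated for balls of radius at most $1$, so the regime $\rho>1$ cannot be treated by it directly and must be handled via the cruder argument above; there one loses sharpness of the pointwise bound, but this is immaterial because $V^2(\rho)/\rho$ is bounded below by $V^2(1)/\ss$ on $(1,\ss)$. No essentially new difficulty is involved.
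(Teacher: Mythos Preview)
Your argument for $\rho\le 1$ is exactly the paper's two-line proof: invoke the pointwise bound \eqref{eq:gradharm} (which is Lemma~\ref{lem:upperestgrG} at $x=0$) and integrate via \eqref{eq:V2est}. The paper does not split off the case $1<\rho<\ss$ at all; it simply writes the same chain of inequalities for $0<\rho<\ss$, tacitly using that \eqref{eq:gradharm} extends to balls of radius up to $\ss$ with constants depending on $\ss$ (the proof of Lemma~\ref{lem:upperestgrG} goes through verbatim for $r\le\ss$, the hypothesis $r\le 1$ there is not essential). Your separate treatment of $1<\rho<\ss$ via \eqref{eq:Green_pot} and the global bound \eqref{eq:gradG} is correct and arguably more honest given how Lemma~\ref{lem:upperestgrG} is actually stated, but it is not needed in practice: in the applications (Lemmas~\ref{lem:poCr} and~\ref{lem:poDr}) one always has $\rho\le 7r/4<1$.
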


\begin{proof}
By \eqref{eq:gradharm} and \eqref{eq:V2est}, for $0<\rho < \ss$ we get
\begin{align*}
	\int\limits_{B_{\rho}}|\nabla_xG_{B_\rho}(0,y)|\pd{y} & 
	\leq c_0\int_{B_{\rho}}\frac{V^2(|y|)}{|y|^{d+1}}\pd{y} \leq c_1\frac{V^2(\rho)}{\rho}.
\end{align*}
\end{proof}

Since $f$ satisfies MVP inside $D$, by the Ikeda-Watanabe formula \eqref{eq:Poisson} we have
\begin{align}\label{eq:harmf}
f(0) = \int_{B_r^c} P_{B_r}(0,w) f(w) \pd{w} = \int_{B_r} G_{B_r}(0, y) \int_{B_r^c} f(w)\nu(w-y)\pd{w}\pd{y}.
\end{align}
\begin{lemma}\label{lem:poCr}
Let $C_r = B_{2r}\setminus B_r$ and suppose $g$ satisfies the doubling condition \eqref{con:doubling}. There exists a constant $\CIV = \CIV(d, \sigma,\kappa)$ such that
$$
\int\limits_{B_{\kappa r}}|\nabla_x G_{B_r}(0, y)| \int\limits_{C_r} f(w)\nu(w-y)\pd{w}\pd{y} \leq \CIV \frac{f(0)}{r}.
$$
\end{lemma}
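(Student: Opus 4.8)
The plan is to estimate the double integral by the product of three quantities: a pointwise upper bound for $\nu(w-y)$ valid on $B_{\kappa r}\times C_r$, the $L^1$-bound for $|\nabla_x G_{B_r}(0,\cdot)|$ on $B_{\kappa r}$, and an $L^1$-bound for $f$ on the annulus $C_r$. The first two are immediate from the estimates already established; the third — the crucial one — is obtained by reading the harmonicity identity \eqref{eq:harmf} ``from below'', i.e.\ keeping only the contribution of $B_{\kappa r}\times C_r$.

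First I would record the two elementary ingredients. Since $r\le 1/2$, for $y\in B_{\kappa r}$ and $w\in C_r$ we have $(1-\kappa)r\le |w-y|\le |w|+|y|<3r$; hence, by monotonicity of $g$, comparability \eqref{eq:gcompnu}, and a number of applications of the doubling condition \eqref{con:doubling} bounded in terms of $\kappa$ (all the scales involved lie below $1$), there is a constant $c=c(d,\sigma,\kappa)$ with
\[
c^{-1}g(r)\le \nu(w-y)\le c\,g(r),\qquad y\in B_{\kappa r},\ w\in C_r.
\]
Next, by \eqref{eq:gradharm} and \eqref{eq:V2est} applied with $\rho=\kappa r$, and since $V$ is non-decreasing,
\[
\int_{B_{\kappa r}}|\nabla_x G_{B_r}(0,y)|\pd{y}\le c\int_{B_{\kappa r}}\frac{V^2(|y|)}{|y|^{d+1}}\pd{y}\le c\,\frac{V^2(\kappa r)}{\kappa r}\le c\,\frac{V^2(r)}{r}.
\]

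The key step is the lower bound for $\int_{C_r}f$. Discarding from \eqref{eq:harmf} the contributions of $B_r\setminus B_{\kappa r}$ in the $y$-variable and of $B_r^c\setminus C_r$ in the $w$-variable (all integrands are non-negative and $C_r\subset B_r^c$, $B_{\kappa r}\subset B_r$), and then using the lower bound for $\nu$ above,
\[
f(0)\ \ge\ \int_{B_{\kappa r}}G_{B_r}(0,y)\int_{C_r}f(w)\,\nu(w-y)\pd{w}\pd{y}\ \ge\ c^{-1}g(r)\left(\int_{B_{\kappa r}}G_{B_r}(0,y)\pd{y}\right)\int_{C_r}f(w)\pd{w}.
\]
It remains to bound $\int_{B_{\kappa r}}G_{B_r}(0,y)\pd{y}$ from below by $c\,V^2(r)$: this follows from Lemma~\ref{lem:lowerestG} (recall $\kappa$ was fixed so small that both \eqref{eq:grad2harm} and the hypotheses of Lemma~\ref{lem:lowerestG} hold) by integrating $G_{B_r}(0,y)\ge c\,V^2(|y|)/|y|^d$ over $B_{\kappa r}\setminus B_{\kappa r/2}$ and invoking $V(\kappa r/2)\ge (\kappa/2)V(r)$ from \eqref{subadd}. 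Hence $\int_{C_r}f(w)\pd{w}\le c\,f(0)/(g(r)V^2(r))$.

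Finally, by Fubini's theorem and the upper bound for $\nu$, followed by the two displayed bounds,
\[
\int_{B_{\kappa r}}|\nabla_x G_{B_r}(0,y)|\int_{C_r}f(w)\,\nu(w-y)\pd{w}\pd{y}
\le c\,g(r)\left(\int_{B_{\kappa r}}|\nabla_x G_{B_r}(0,y)|\pd{y}\right)\int_{C_r}f(w)\pd{w}
\le \CIV\,\frac{f(0)}{r},
\]
with $\CIV=\CIV(d,\sigma,\kappa)$, which is the assertion. The only point requiring care is the chain of inequalities in the key step: one must check that $g$-doubling can indeed be iterated on all scales between $(1-\kappa)r$ and $3r$ — guaranteed by $r\le 1/2$ — and that Lemma~\ref{lem:lowerestG} is applicable on $B_{\kappa r}$, which is exactly why $\kappa$ was chosen small at the start of the section; everything else is bookkeeping of the constants.
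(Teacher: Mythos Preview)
Your proof is correct and follows essentially the same approach as the paper's: both rely on the comparability $\nu(w-y)\approx\text{const}$ on $B_{\kappa r}\times C_r$ (the paper uses $\nu(w)$ where you use $g(r)$), the bound $\int_{B_{\kappa r}}|\nabla_x G_{B_r}(0,y)|\pd y\le cV^2(r)/r$, the lower bound $\int_{B_{\kappa r}}G_{B_r}(0,y)\pd y\ge cV^2(r)$, and the harmonicity identity \eqref{eq:harmf} read from below. The only difference is organizational --- the paper keeps the double integral intact and swaps $|\nabla_x G|$ for $G/r$ directly via \eqref{eq:partg}, while you fully decouple the variables and pass through the intermediate bound $\int_{C_r}f\le c\,f(0)/(g(r)V^2(r))$ --- but the mathematical content is the same.
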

\begin{proof}
By Lemma \ref{lem:partg}, and \eqref{eq:lowerestGw}, we have
\begin{equation}\label{eq:partg}
	\int\limits_{B_{\kappa r}}|\nabla_xG_{B_r}(0,y)|\pd{y} \leq c \frac{V^2(r)}{r}
	\leq \frac{c_1}{r}\int_{B_{\kappa r}}G_{B_r}(0, y)\pd{y}.
\end{equation}
Note that there exists a constant $c_2 = c_2(d, \sigma,\kappa)$ such that
	\begin{align}\label{eq:levycomp}
	c_2^{-1} \nu(w) \le 	\nu(w-y) \le c_2\nu(w), \qquad 	r < |w| < 2r,\;  |y|<\kappa r.
	\end{align}
Indeed, since $\frac{2}{1-\kappa} |w-y| > 2r > |w| > r > |w-y|/3$, by \eqref{eq:gcompnu} and \eqref{con:doubling} we get \eqref{eq:levycomp}.
Now, by \eqref{eq:levycomp}, \eqref{eq:partg} and \eqref{eq:harmf}
\begin{align*}
  &\int_{B_{\kappa r}}|\nabla_xG_{B_r}(0, y)| \int_{C_r} f(w)\nu(w-y)\pd{w}\pd{y} 
	\leq  c_2\int_{B_{\kappa r}}|\nabla_xG_{B_r}(0, y)| \int_{C_r} f(w)\nu(w)\pd{w}\pd{y} \\
	&\leq \frac{c_3}{r}\int_{B_{\kappa r}}G_{B_r}(0, y)\pd{y} \int\limits_{C_r} f(w)\nu(w)\pd{w} 
	\leq \frac{c_4}{r}\int_{B_{\kappa r}}G_{B_r}(0, y)\int_{C_r} f(w)\nu(w-y)\pd{w}\pd{y}\\
	&\leq  c_5\frac{f(0)}{r}.
\end{align*}
\end{proof}

\begin{lemma}\label{lem:poDr}
Let $D_r = B_{2r}^c$. There exists a constant $\CVd = \CVd(d, \sigma, \kappa)$ such that
$$
\int\limits_{B_{\kappa r}}|\nabla_x G_{B_r}(0, y)| \int\limits_{D_r} f(w)\nu(w-y)\pd{w}\pd{y} \leq \CVd\frac{f(0)}{r}.
$$
\end{lemma}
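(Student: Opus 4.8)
The plan is to follow the scheme of the proof of Lemma~\ref{lem:poCr} — bound $|\nabla_x G_{B_r}(0,\cdot)|$, interchange the order of integration, and compare with the Ikeda--Watanabe identity \eqref{eq:harmf} through the lower bound for $G_{B_r}$ on $B_{\kappa r}$ — the new point being that for $w\in D_r$ the point $w$ may be arbitrarily far from the origin and $\nu$ may decay very fast (e.g.\ when $g(r)=e^{-r^2}r^{-d-\alpha}$), so the pointwise comparison $\nu(w-y)\approx\nu(w)$ available on the annulus $C_r$ has no analogue here and must be replaced by an integrated estimate. Concretely, I would start from $|\nabla_x G_{B_r}(0,y)|\le c\,V^2(|y|)/|y|^{d+1}$ (this is \eqref{eq:gradharm}) and, by Tonelli's theorem, bound the left-hand side of the asserted inequality by
$$
c\int_{D_r}f(w)\left(\int_{B_{\kappa r}}\frac{V^2(|y|)}{|y|^{d+1}}\,\nu(w-y)\pd{y}\right)\pd{w},
$$
thereby reducing the statement to the pointwise-in-$w$ estimate
\begin{equation}\label{planstar}
\int_{B_{\kappa r}}\frac{V^2(|y|)}{|y|^{d+1}}\,\nu(w-y)\pd{y}\ \le\ C\,\frac{V^2(r)}{r^{d+1}}\int_{B_{\kappa r}}\nu(w-y)\pd{y},\qquad w\in D_r,
\end{equation}
with $C=C(d,\sigma,\kappa)$.

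Granting \eqref{planstar}, the argument finishes as in Lemma~\ref{lem:poCr}: integrating $f(w)\pd{w}$ over $D_r$ and applying Tonelli once more, the right-hand side of \eqref{planstar} produces $C\,\frac{V^2(r)}{r^{d+1}}\int_{B_{\kappa r}}\int_{D_r}f(w)\nu(w-y)\pd{w}\pd{y}$, and by the lower bound $G_{B_r}(0,y)\ge c'V^2(r)/r^d$ for $y\in B_{\kappa r}$ (this is \eqref{eq:lowerestGw}, used exactly as in \eqref{eq:partg}) together with \eqref{eq:harmf} we get $\int_{B_{\kappa r}}\int_{D_r}f(w)\nu(w-y)\pd{w}\pd{y}\le (c')^{-1}(r^d/V^2(r))\,f(0)$, so the whole expression is $\le c\,C\,(c')^{-1}\,f(0)/r$, as claimed.

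It remains to explain how I would prove \eqref{planstar}; this is where the hypothesis $\alpha>1$ enters. Write $\mu_w(A)=\int_A\nu(w-y)\pd{y}$. The first step is a $d$-dimensional upper volume bound $\mu_w(B_\rho)\le c_0\,(\rho/\kappa r)^{d}\,\mu_w(B_{\kappa r})$ for $0<\rho\le\kappa r$ and $w\in D_r$, with $c_0=c_0(d)$: using $\nu\approx g$ with $g$ radial nonincreasing, on $B_\rho$ one has $|w-y|\ge|w|-\rho$, so $\mu_w(B_\rho)\le c\,\rho^d\,g(|w|-\rho)$, and since $|w|\ge 2r$ the argument stays positive and for $\rho\le\tfrac13\kappa r$ is at least $|w|-\tfrac13\kappa r$, giving $\mu_w(B_\rho)\le c\,\rho^d\,g(|w|-\tfrac13\kappa r)$; on the other hand the ball $B\big(\tfrac23\kappa r\,w/|w|,\tfrac14\kappa r\big)\subset B_{\kappa r}$ has $|w-y|\le|w|-\tfrac13\kappa r$ on it, so $\mu_w(B_{\kappa r})\ge c^{-1}(\kappa r)^d\,g(|w|-\tfrac13\kappa r)$, and the range $\tfrac13\kappa r<\rho\le\kappa r$ is trivial. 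The second step is a dyadic decomposition $B_{\kappa r}=\bigcup_{k\ge0}\big(B_{2^{-k}\kappa r}\setminus B_{2^{-k-1}\kappa r}\big)$: on the $k$-th shell $V^2(|y|)/|y|^{d+1}\le 2^{d+1}V^2(2^{-k}\kappa r)/(2^{-k}\kappa r)^{d+1}$ by monotonicity of $s\mapsto V^2(s)/s^{d+1}$, and the $\mu_w$-mass of the shell is $\le c_0\,2^{-kd}\mu_w(B_{\kappa r})$ by the volume bound, so the left-hand side of \eqref{planstar} is at most $2^{d+1}c_0(\kappa r)^{-d}\mu_w(B_{\kappa r})\sum_{k\ge0}V^2(2^{-k}\kappa r)/(2^{-k}\kappa r)$. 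By \eqref{scalV2} (applicable since $\kappa r<1$) one has $V^2(2^{-k}\kappa r)\le c_1^2\,2^{-k\alpha}V^2(\kappa r)$, so this series is dominated by $c_1^2\,(V^2(\kappa r)/\kappa r)\sum_{k\ge0}2^{-k(\alpha-1)}$, which is finite precisely because $\alpha>1$; combined with $V^2(\kappa r)/(\kappa r)^{d+1}\le V^2(r)/(\kappa^{d+1}r^{d+1})$ this yields \eqref{planstar}.

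The hard part is thus \eqref{planstar}. The obstruction is genuine: for $w$ far from the origin and $\nu$ rapidly decaying, $\nu(w-\cdot)$ varies by arbitrarily large factors over $B_{\kappa r}$, so it cannot be frozen and must be carried inside the integral; what saves the estimate is, first, that the mass of $\nu(w-\cdot)$ cannot concentrate on small sub-balls of $B_{\kappa r}$ (the volume bound), and second, that the ensuing geometric series converges exactly when $\alpha>1$, in accordance with the standing hypothesis of the theorem.
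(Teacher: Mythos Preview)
Your argument is correct and takes a genuinely different route from the paper's proof. The paper fixes $w\in D_r$ and introduces an auxiliary larger ball $B_\rho$ with $\rho=7r/4$, together with a cone $\gamma^w$ with apex at $0$ and axis through $w$; the portion $\gamma_1^w=(B_{\kappa\rho}\setminus B_{a\rho})\cap\gamma^w$ has the geometric property that every $y\in\gamma_1^w$ is strictly closer to $w$ than any point of $B_{\kappa r}$, so by monotonicity of $g$ one can freeze $\nu(w-y)$ at the single value $g(|w|-\kappa r)$, compare with $G_{B_\rho}(0,\cdot)$ on $\gamma_1^w$, and finish via the mean value identity for $B_\rho$. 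Your approach replaces this geometric decoupling by the measure-theoretic observation that $\mu_w=\nu(w-\cdot)\,\pd{y}$ cannot concentrate on small sub-balls of $B_{\kappa r}$ (the $d$-dimensional volume bound $\mu_w(B_\rho)\lesssim(\rho/\kappa r)^d\mu_w(B_{\kappa r})$), and then runs a dyadic summation against the weight $V^2(|y|)/|y|^{d+1}$; the series converges precisely because $\alpha>1$, making the role of the scaling hypothesis transparent. Your route avoids the cone construction and the passage to the larger ball $B_\rho$, and yields the weighted inequality \eqref{planstar} as a clean intermediate statement; the paper's route, in exchange, sidesteps any quantitative interplay between the weight and $\mu_w$ by a one-point comparison. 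One small remark: in your final step you invoke the pointwise bound $G_{B_r}(0,y)\ge c'V^2(r)/r^d$ on all of $B_{\kappa r}$, which is exactly the intermediate inequality \eqref{eq:lowerestGw} rather than the conclusion of Lemma~\ref{lem:lowerestG}; this is fine provided $\kappa$ is taken small enough (below the threshold $\min(a,1-b)$ from that proof), which is consistent with how $\kappa$ is fixed in the paper.
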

\begin{proof}
Let $\rho = 7r/4$ and $a = 6\kappa/7$. Note that $\rho<1$. For a given $w \in D_r$ we let $w' \in \partial D_r$ be such that $|w-w'| = \dist{w, \partial D_r}$. 
We consider a one-sided circular cone $\gamma^w$  with apex in $0$ and axis going through the point $w$. 
The cone $\gamma^w$ is chosen in such a way that its boundary contains $\partial B(0, ar') \cap \partial B(w', (2-\kappa) r)$. 
Denote $\gamma_1^w := \left(B_{\kappa\rho} \setminus B_{a\rho}\right) \cap \gamma^w$. 
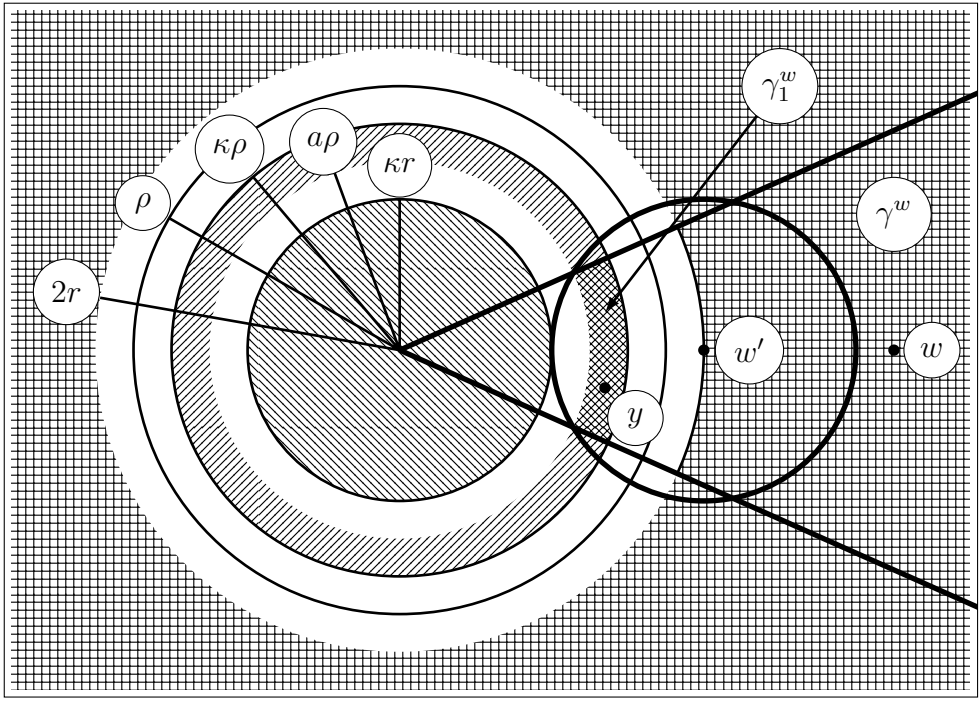
\begin{figure}
\begin{center}
\begin{tikzpicture}[scale=0.1]
\clip [draw] (-52,46) rectangle (76,-46);
\coordinate (A) at (0,0);
\coordinate (W) at (40, 0);
\coordinate (V) at (65, 0);
\coordinate (Y) at (27, -5);

\fill[pattern=grid] (-51,45) rectangle (75,-45);
\fill[white] (0,0) circle (40cm);
\draw[line width = 1pt, pattern = north east lines] (0,0) circle (30cm);

\begin{scope}
	\path [clip] (0,0) -- (22.8125*2, 10.227*2) -- (22.8125*2, -10.227*2);
	\draw[line width = 1pt, fill=white] (0, 0) circle (40cm);
	\draw[line width = 1pt, pattern = crosshatch] (0,0) circle (30cm);
\end{scope}

\draw[line width = 1pt] (0,0) circle (35cm);
\fill[black] (W) circle (.75cm);
\fill[black] (V) circle (.75cm);
\fill[black] (Y) circle (.75cm);
\fill[white] (0,0) circle (25cm);
\draw[line width = 1pt, pattern = north west lines] (0,0) circle (20cm);
\draw[line width = 2pt] (W) circle (20cm);

\draw [line width=2pt] 
	(0,0) -> (22.8125*4, 10.227*4)
	(0,0) -> (22.8125*4, -10.227*4)
	;

\draw [line width=1pt]
	(A) -- ({20*sin(0)}, {20*cos(0)})
	(A) -- ({25*sin(-20)}, {25*cos(-20)})
	(A) -- ({30*sin(-40)}, {30*cos(-40)})
	(A) -- ({35*sin(-60)}, {35*cos(-60)})
	(A) -- ({40*sin(-80)}, {40*cos(-80)})
	;
	
\draw 
	({(4.5+20)*sin(0)}, {(4.5+20)*cos(0)}) node [circle, fill=white, draw=black] {$\kappa r$}
	({(4.4+25)*sin(-20)}, {(4.4+25)*cos(-20)}) node [circle, fill=white, draw=black] {$a\rho$}
	({(4.8+30)*sin(-40)}, {(4.8+30)*cos(-40)}) node [circle, fill=white, draw=black] {$\kappa\rho$}
	({(4+35)*sin(-60)}, {(4+35)*cos(-60)}) node [circle, fill=white, draw=black] {$\rho$}
	({(4.5+40)*sin(-80)}, {(4.5+40)*cos(-800)}) node [circle, fill=white, draw=black] {$2r$}
	;

\draw
	(46, 0) node [circle, fill=white, draw=black]{$w'$}
	(70, 0) node [circle, fill=white, draw=black]{$w$}
	(65, 18) node [circle, fill=white, draw=black]{$\gamma^w$}
	(31, -9) node [circle, fill=white, draw=black] (0.8cm) {$y$}
	;
\draw [-latex, line width=1pt]	(50,35) -- (27, 5)	;
\draw 	(50, 35) node [circle, fill=white, draw=black]{$\gamma_1^w$};

\end{tikzpicture}
\end{center}
\caption{
The illustration shows two-dimensional schema of intersection of the cone $\gamma^w$ with the balls described in the proof of the Lemma \ref{lem:poDr}. 
}\label{fig:duzokulek}
\end{figure}
Now, by Lemma \ref{lem:partg}, \eqref{subadd} and \eqref{eq:lowerestGw}, we have
\begin{equation}\label{eq:part74g}
	\int_{B_{\kappa r}}|\nabla_xG_{B_r}(0,y)|\pd{y} 
	\leq	c_1 \frac{V^2(r)}{r}  \leq \frac{c_2}{r} \int_{\gamma_1^w} \frac{V^2(r)}{r^d} \pd{y}	
	\leq \frac{c_3}{r}\int_{\gamma_1^w}G_{B_{\rho}}(0, y)\pd{y},
\end{equation}
where the constant $c_3$ does not depend on $w$.
We observe also that (see Figure \ref{fig:duzokulek})
\begin{align}
	|w| - \kappa r &> |w-y|, \qquad y \in \gamma_1^w,  \label{indeg:wawy}\\
	|w| - \kappa r &< |w-y|, \qquad y \in \kappa r.  \label{indeg:wawy2}
\end{align} 
Therefore, by \eqref{eq:gcompnu}, monotonicity of $g$, \eqref{indeg:wawy}, \eqref{eq:part74g} \eqref{indeg:wawy2} and \eqref{eq:harmf} , we get
\begin{align*}
	& \int_{B_{\kappa r}}|\nabla_x G_{B_r}(0,y)| \int_{D_r} f(w)\nu(w-y)\pd{w}\pd{y} \\
	&\leq c_4\int_{B_{\kappa r}}|\nabla_x G_{B_r}(0,y)| \int_{D_r} f(w)\nu(|w|-\kappa r)\pd{w}\pd{y}\\
	&\leq \frac{c_3c_{4}}{r} \int_{D_r} \int_{\gamma_1^w}G_{B_{\rho}}(0, y) f(w)\nu(|w|-\kappa r)\pd{y}\pd{w} \\
	&\leq  \frac{c_5}{r} \int_{D_r} \int_{\gamma_1^w} G_{B_{\rho}}(0, y) f(w)\nu(w-y)\pd{y}\pd{w} \\
	& \leq \frac{c_6}{r}\int_{B_\rho}G_{B_{\rho}}(0, y) \int_{D_r}f(w)\nu(w-y)\pd{w}\pd{y}
	\leq c_7	\frac{f(0)}{r}. 
\end{align*}

\end{proof}

Now, we are ready to prove the main result. 

\begin{theorem}\label{TheoremMVP}
	Let $d \in \N$, $\alpha \in (1,2)$ and $D\subset \Rd$ be an open set. Let $\psi \in \WLSC{\alpha}{\theta}{c_\alpha}$, where $\theta = 0$ for $d=1$ and $\theta=1$ for $d \ge2$.  Furthermore, we assume that $g$ satisfies doubling condition \eqref{con:doubling}.
	Let $f$ be a function satisfying MVP inside $D$. There exists a constant $C$ such that
	\begin{equation*}
		|\nabla_x f(x)| \leq C \frac{f(x)}{\delta_x^D \land 1}, \qquad x \in D,
	\end{equation*}
\end{theorem}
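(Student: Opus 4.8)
The plan is to turn the gradient of $f$ into an integral against the Ikeda–Watanabe density and then to absorb it using the $L^1$-bound for $\nabla_x G_{B_r}$ from Lemma \ref{lem:partg} together with the pointwise estimates of Lemmas \ref{lem:upperestgrG} and \ref{lem:lowerestG}.

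First I would normalise: fix $x\in D$, translate so that $x=0$, and set $r=(\delta_0\wedge1)/2$, so that $B_r\subset D$, $r\le1/2$, and (since $f$ satisfies MVP inside $D$) $f$ satisfies MVP in $B_\rho$ for every $\rho<2r$. By \eqref{eq:Poisson}, for $|y|<r$,
\[
f(y)=\int_{B_r}G_{B_r}(y,z)\,\mu(z)\,dz,\qquad \mu(z)=\int_{B_r^c}f(w)\,\nu(w-z)\,dw .
\]
Next I would establish $\nabla f(0)=\int_{B_r}\nabla_x G_{B_r}(0,z)\mu(z)\,dz$. Writing $\mu=\mu\mathbf 1_{B_{\kappa r}}+\mu\mathbf 1_{B_r\setminus B_{\kappa r}}$, the first density is bounded (because $|w-z|\ge(1-\kappa)r$ for $z\in B_{\kappa r}$, $w\in B_r^{c}$, and $f\in\Y$), so Lemma \ref{GradGreen1} applies to it; for the second piece the pole $0$ stays at distance $\ge\kappa r$ from the support, and one differentiates by Lemma \ref{lem:nablaGP1} (equivalently, via the MVP property of $\nabla_x G_{B_r}(0,\cdot)$ on $B_r\setminus B_{\kappa r}$, Lemma \ref{lem:harmgrG}).

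Then $|\nabla f(0)|\le \mathrm I+\mathrm{II}$ with $\mathrm I=\int_{B_{\kappa r}}|\nabla_x G_{B_r}(0,z)|\mu(z)\,dz$ and $\mathrm{II}=\int_{B_r\setminus B_{\kappa r}}|\nabla_x G_{B_r}(0,z)|\mu(z)\,dz$. For $\mathrm I$ one splits $\mu=\mu_C+\mu_D$ along $C_r=B_{2r}\setminus B_r$ and $D_r=B_{2r}^{c}$ and applies Lemma \ref{lem:poCr} to the $\mu_C$–part and Lemma \ref{lem:poDr} to the $\mu_D$–part, giving $\mathrm I\le(\CIV+\CVd)f(0)/r$. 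For $\mathrm{II}$ I would use that $\nabla_x G_{B_r}(0,\cdot)$ satisfies MVP in $A:=B_r\setminus B_{\kappa r}$ and vanishes on $B_r^{c}$, so $|\nabla_x G_{B_r}(0,y)|\le\int_{B_{\kappa r}}P_A(y,z)|\nabla_x G_{B_r}(0,z)|\,dz$ for $y\in A$; substituting, applying \eqref{eq:Poisson} to $A$, and using $\int_A G_A(v,y)\mu(y)\,dy\le\int_{B_r}G_{B_r}(v,y)\mu(y)\,dy=f(v)$ yields
\[
\mathrm{II}\le\int_{B_{\kappa r}}|\nabla_x G_{B_r}(0,z)|\Big(\int_A\nu(z-v)f(v)\,dv\Big)\,dz .
\]
Splitting the $v$–integral at $|v|=2\kappa r$: on $|v|\ge2\kappa r$ one has $\nu(z-v)\approx\nu(v)$ uniformly in $z\in B_{\kappa r}$, so that part factors and is controlled using $\int_{B_{\kappa r}}|\nabla_x G_{B_r}(0,z)|\,dz\lesssim r^{-1}\int_{B_{\kappa r}}G_{B_r}(0,z)\,dz$, the comparison $\int_{B_{\kappa r}}G_{B_r}(0,z)\,dz\approx\int_{B_{\kappa r}}G_{B_{\kappa r}}(0,z)\,dz$ (from Lemma \ref{lem:lowerestG} and the Pruitt bound \eqref{eq:ExtauDest}), and MVP of $f$ in $B_{\kappa r}$, hence $\lesssim f(0)/r$; on $\kappa r\le|v|<2\kappa r$, where $|z-v|$ may be of the order of $\mathrm{dist}(z,\partial B_{\kappa r})$, I would combine \eqref{eq:grad2harm} near $\partial B_{\kappa r}$ with the boundary vanishing of the Green potential $v\mapsto\int_A G_A(v,y)\mu(y)\,dy$, again arriving at $\lesssim f(0)/r$.

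The main obstacle is this last portion of $\mathrm{II}$: replacing $\int_A G_A(v,y)\mu(y)\,dy$ by the cruder bound $f(v)$ leaves an integral with a non-integrable $\mathrm{dist}^{-\alpha}$–singularity at $\partial B_{\kappa r}$ as soon as $\alpha>1$, so the estimate has to retain the boundary decay of either $\nabla_x G_{B_r}(0,\cdot)$ near $\partial B_r$ or of the Green potential near $\partial B_{\kappa r}$ (and it must do so without invoking full boundary Green-function estimates). Together with the differentiation step, where $\mu$ is not bounded near $\partial B_r$, this is where the genuine work of the proof is concentrated; once $\mathrm I$ and $\mathrm{II}$ are bounded by $cf(0)/r$ the claimed inequality follows with $C=C(d,\psi)$.
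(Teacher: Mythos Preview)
Your overall architecture is the paper's: reduce to $x=0$, write $f$ via Ikeda--Watanabe, split the $y$-integral along $B_{\kappa r}$ and the annulus $A_r=B_r\setminus B_{\kappa r}$, handle the inner piece $\mathrm I$ by Lemmas \ref{lem:poCr} and \ref{lem:poDr}, and for the annulus piece $\mathrm{II}$ use the MVP of $\nabla_x G_{B_r}(0,\cdot)$ to push the gradient back into $B_{\kappa r}$. Up to this point the two proofs coincide.

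The divergence, and the genuine gap in your proposal, is in how $\mathrm{II}$ is finished. After arriving at
\[
\mathrm{II}\le\int_{B_{\kappa r}}|\nabla_x G_{B_r}(0,z)|\,W(z)\,dz,
\qquad W(z)=\int_{A_r}P_{A_r}(y,z)\mu(y)\,dy,
\]
you estimate $W(z)\le\int_{A_r}\nu(z-v)f(v)\,dv$ for all $z\in B_{\kappa r}$ and then split on $|v|$. As you yourself note, this leaves a non-integrable singularity when $z$ approaches $\partial B_{\kappa r}$ and $v$ approaches it from the outside; your suggested fix (``boundary vanishing of the Green potential'') would require exactly the kind of boundary Green-function decay the paper is designed to avoid, and is not made precise.

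The paper sidesteps this entirely by splitting on $z$ \emph{before} estimating $W$: write $B_{\kappa r}=C_r\cup D_r$ with $C_r=B_{\kappa^2 r}$ and $D_r=B_{\kappa r}\setminus B_{\kappa^2 r}$. On $D_r$ one has $|z|\approx r$, so \eqref{eq:grad2harm} gives $|\nabla_x G_{B_r}(0,z)|\le C\,G_{B_r}(0,z)/r$; do \emph{not} estimate $W$ but instead fold $G_{B_r}(0,z)P_{A_r}(y,z)$ back into $G_{B_r}(0,y)$ via the MVP of $G_{B_r}(0,\cdot)$ in $A_r$, getting $\le \tfrac{C}{r}\int_{A_r}G_{B_r}(0,y)\mu(y)\,dy\le C f(0)/r$. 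On $C_r$ you may use the crude bound $W(z)\le\int_{A_r}\nu(z-v)f(v)\,dv$, and the resulting expression is precisely \eqref{eq:poBrc} at scale $\kappa r$ (since $C_r=B_{\kappa(\kappa r)}$ and $A_r\subset B_{\kappa r}^c$), which you have already established. This self-similar reduction is the missing idea; with it, no boundary analysis is needed.

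A minor point: in your differentiation step you invoke $f\in\Y$, but Theorem \ref{TheoremMVP} does not assume this. The finiteness of $\mu$ on $B_{\kappa r}$ should be argued from $P_{B_r}[|f|](0)<\infty$ and the comparison $\nu(w-z)\approx\nu(w)$ for $z\in B_{\kappa r}$, $w\in B_r^c$ (which uses the doubling of $g$ near the ball and monotonicity otherwise).
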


\begin{proof}
Assume that $\la>1$. By Lemmas \ref{lem:poCr} and \ref{lem:poDr}  for $r\le 1/2$, we have  
\begin{align}\label{eq:poBrc}
	\int_{B_{\kappa r}}| \nabla_x G_{B_r}(0,y)|\int_{B_r^c}f(w)\nu(w-y)dwdy\leq C\frac{f(0)}{r}.
\end{align}
As before we may and do assume $x=0$. Let $r= (\dex\land1)/2$. Denote $A_r=B_{r}\setminus B_{\kappa r}$. By Lemmas \ref{lem:harmgrG} and \ref{lem:nablaGP1}, we have
\begin{align*}&\int_{A_r}| \nabla_x G_{B_r}(0,y)|\int_{B_r^c}f(w)\nu(w-y)\pd{w}\pd{y} \\
&\leq \int_{A_r}\int_{B_{\kappa r}} | \nabla_x G_{B_r}(0,u)|P_{A_r}(y,u)\pd{u}\int_{B_r^c}f(w)\nu(w-y)\pd{w}\pd{y}\\
&= \int_{C_r}| \nabla_x G_{B_r}(0,u)|W(u) \pd{u}  +  \int_{D_r}| \nabla_x G_{B_r}(0,u)|W(u) \pd{u} = I_1+I_2,
\end{align*}
where $C_r = B_{\kappa^2 r}$, $D_r = B_{\kappa r} \setminus B_{\kappa^2 r}$ and 
\begin{align*}
W(u)= \int_{A_r} P_{A_r}(y,u)\int_{B_r^c}f(w)\nu(w-y)\pd{w}\pd{y}.
\end{align*}
We need to prove that $I_1$ and $I_2$ are bounded by $cf(0)/r$. By Lemma \ref{lem:harmG}, $G_{B_r}(0,y) = \int_{\kappa r} G_{B_r}(0,u) P_{A_r}(y,u) \pd{u}$ for $y\in A_r$. Hence, by \eqref{eq:grad2harm} we get
\begin{align*}I_2&\leq \frac{C}{r} \int_{D_r} \int_{A_r} G_{B_r}(0,u)P_{A_r}(y,u)\int_{B_r^c}f(w)\nu(w-y)\pd{w}\pd{y}\pd{u}\\
&\leq \frac{C}{r}\int_{A_r} G_{B_r}(0,y)\int_{B_r^c}f(w)\nu(w-y)\pd{w}\pd{y} \leq C \frac{f(0)}{r}
\end{align*}
Next, since $f$ satisfies MVP, by \eqref{eq:Poisson} we have
\begin{align*}W(u)&=\int_{A_r} \int_{A_r}G_{A_r}(y,z)\nu(u-z)\pd{z} \int_{B_r^c}f(w)\nu(w-y)\pd{w}\pd{y}\\
&\leq\int_{A_r} \nu(u-z) \int_{A_r^c}f(w)\int_{A_r}G_{A_r}(y,z)\nu(w-y)\pd{y}\pd{w}\pd{z}\\
&=  \int_{A_r} \nu(u-z) \int_{A_r^c}f(w)P_{A_r}(z,w)\pd{w}\pd{z}\\
&= \int_{A_r} \nu(u-z)f(z)\pd{z}.
\end{align*}
Hence, by \eqref{eq:poBrc} 
$$I_1\leq \int_{ C_r}  | \nabla_x G_{B_r}(\theta,u)|\int_{B^c_{\kappa r}} \nu(u-z)f(z)\pd{z}\pd{u}\leq C\frac{f(0)}{r}.$$
\end{proof}

\begin{proof}[Proof of Theorem \ref{Theorem1}]
If $f \in L^1_{loc}(\Rd) \cap L^1(\Rd,1\wedge g^*(|x|)dx)$ is $\L$-harmonic, then by Lemma \ref{lem:harm=MVP}, $f$ satisfies MVP. Now \eqref{eq:egf_t2} follows from Theorem \ref{TheoremMVP}. 
\end{proof}

\appendix
\section{Appendix} 
In this section we will prove relation between $\L$-harmonic functions and functions satisfying MVP. 
Analogous results were proved for the fractional Laplacian in \cite{MR1671973} and operators $\L$ with twice differentiable L\'evy density (\cite{MR4194536}).
We will work with slightly different assumptions. We will not assume WLSC property for $\psi$, but we will still keep an assumption that $\int_{\R^d} g(z) \pd{z} = \infty$ (generally it follows from WLSC condition). However, to simplify the discussion, we will assume that there is  $R\in(0,\infty]$ such that  a function $y\mapsto G_{B_R}(0,y)$ is continuous on $B_R\setminus\{0\}$. 
Let us notice that by the Hunt formula \eqref{eq:Huntp} and the strong Markov property we have
\begin{equation}\label{eq:Hunt_ap}G_{B_r}(0,y)=G_{B_R}(0,y)-P_{B_r}[G_{B_R}(0,\cdot)](y),\quad y\neq0, \; 0<r<R. \end{equation}
In particular, by the proof of Lemma \ref{lem:harmG},  $G_{B_r}(0,\cdot)$ satisfies MVP in $B_r\setminus \overline{B}_{\rho}$ for every $\rho>0$.

By ${\U}$ we denote the Dynkin operator that is (see \cite[Chapter 7]{MR2962168})
$${\U}[f](x)=\lim_{r\to 0}\dfrac{P_{B(x,r)}[f](x)-f(x)}{\E^x \tau_{B(x,r)}}.$$ 
By the Markov property if $u$ satisfies MVP in an open set $D$ then ${\U}[u](x)=0$ on $D$.
Let $g^*$ be such that $g(s)\leq g^*(s)$, $s>0$,  $g^*$ satisfies doubling and $g^*(r)\approx g^*(r+1)$ for large $r$. Notice that $g^*(r)$ can vanish for large $r$.  Recall that 
$$\Y=\{u\in L^1_{loc}(\Rd): u\in L^1(\Rd,1\wedge g^*(|x|)dx)\}$$
and $\|u\|_Y = \|u\|_{L^1(\Rd,1\wedge g^*(|x|)dx)}$. 
 Observe that if $g^*$ is positive, for $\varphi\in C^\infty_c(D)$ we have
$$|{\cal L}[\varphi](x)|\leq C(1\wedge g^*(|x|)).$$
If $g^*$ vanishes for large arguments, then $|{\cal L}[\varphi](x)|\leq C$, $x\in\R^d$ and $|{\cal L}[\varphi](x)|=0$ for $|x|$ sufficiently large.

\begin{lem}\label{lem:MPVtoHarm} 
Assume that $u\in \Y$.  If $u$ satisfies MVP in an open set $D$ then $u$ is $\L$-harmonic in $D$.
\end{lem}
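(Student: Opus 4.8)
The plan is to test $\L$-harmonicity directly against the definition: fix $\varphi\in C^\infty_c(D)$ and show $(u,\L\varphi)=0$. The guiding identity is that for a function satisfying MVP, the Dynkin operator $\U[u]$ vanishes on $D$; I want to turn this pointwise/local fact into the global weak identity. First I would choose a slightly larger relatively compact open set $D'$ with $\mathrm{supp}\,\varphi\subset D'\Subset D$, and work with small balls $B(x,r)\subset D'$. The key computation is a version of Fubini / self-adjointness: using the Ikeda--Watanabe representation $P_{B(x,r)}[f](x)=\int G_{B(x,r)}(x,z)\,\L'$-type expressions, or more cleanly the identity $\int_D \U[\varphi](x)\,u(x)\,dx = \int_D \varphi(x)\,\U[u](x)\,dx$ when one side makes sense, combined with $\U[u]=0$ on $D$ (valid because $u$ satisfies MVP in $D$, hence MVP in every $B(x,r)\Subset D$). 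Since $\varphi\in C^\infty_c(D)$, $\U[\varphi]=\L\varphi$ pointwise, so this would give $(u,\L\varphi)=(u,\U\varphi)=0$.

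The main technical work is justifying the interchange of integrals and the passage $r\to 0$, and this is where the assumption $u\in\Y$ is essential. The point is that $\L\varphi(x)$ is dominated by $C(1\wedge g^*(|x|))$ (as noted just above the lemma), so $\int |u(x)|\,|\L\varphi(x)|\,dx\le C\|u\|_\Y<\infty$; this gives absolute convergence of $(u,\L\varphi)$. To realize $(u,\L\varphi)$ as a limit of averaged expressions I would write, for the mollified operator
$$
\L_r\varphi(x) := \frac{P_{B(x,r)}[\varphi](x)-\varphi(x)}{\E^x\tau_{B(x,r)}},
$$
the standard fact $\L_r\varphi(x)\to\L\varphi(x)$ as $r\to 0$ uniformly in $x$ (using $\varphi\in C^\infty_c$ and the $C^{1,1}$-type scaling of $\E^x\tau_{B(x,r)}\approx V^2(r)$), together with a uniform domination $|\L_r\varphi(x)|\le C(1\wedge g^*(|x|))$ for $r$ small, so that dominated convergence yields $(u,\L_r\varphi)\to(u,\L\varphi)$. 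On the other hand, for each fixed small $r$ one integrates $\int u(x)\,\L_r\varphi(x)\,dx$ and applies Fubini plus the symmetry $p_{B(x,r)}(t,x,y)=p_{B(x,r)}(t,y,x)$ (equivalently, self-adjointness of $P_{B(x,r)}$ against Lebesgue measure, valid here because $p_t$ is symmetric) to move the averaging operator onto $u$; since $u$ satisfies MVP in each such ball, $P_{B(x,r)}[u](x)=u(x)$, which makes the resulting integrand vanish, so $(u,\L_r\varphi)=0$ for every small $r$. Letting $r\to 0$ gives $(u,\L\varphi)=0$.

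The hard part will be the rigorous justification of the Fubini step for fixed $r$, because it requires controlling $\int\!\int |u(x)|\,p_{B(x,r)}(\cdot)\cdots$ with $x$ ranging over a neighbourhood of $\mathrm{supp}\,\varphi$ while $y$ (the ``killed'' variable) can be anywhere in $\R^d$ through the exit distribution $X_{\tau_{B(x,r)}}$; one must check that $\int_{B_r^c}|u(y)|\,\nu(y-x)\,dy<\infty$ uniformly for $x$ near $\mathrm{supp}\,\varphi$ and $r$ small, which again is exactly an $\|u\|_\Y$ estimate after bounding $\nu$ above by $c\,g^*(|\cdot|)$ away from the origin (using $g\le g^*$ and the doubling/slow-decay of $g^*$ to pass from $\nu(y-x)$ to a multiple of $g^*(|y|)$). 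A minor additional point is that $u\in L^1_{loc}$ makes the local part ($y$ near $\mathrm{supp}\,\varphi$) integrable. Once these integrability facts are in place, all interchanges are legitimate and the argument closes; I would present it by first stating the two integrability lemmas (domination of $\L\varphi$ and of the tail integral of $\nu$ against $u$), then the $\L_r\varphi\to\L\varphi$ convergence, then the Fubini identity, and finally combine them.
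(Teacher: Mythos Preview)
Your approach is viable and is a natural dual of the paper's. The paper mollifies the solution: it sets $u_\epsilon=\phi_\epsilon*u\in C^\infty$, uses that the Dynkin operator extends $\L$ and commutes with convolution so that $\L u_\epsilon=\U u_\epsilon=\phi_\epsilon*\U u=0$ on $D_\epsilon$, and then lets $\epsilon\to0$ using $u\in\Y$. You instead mollify the operator, replacing $\L$ by $\L_r$ and passing $r\to0$. Both arguments rest on the same structural fact (translation invariance turns the averaging into a convolution with a symmetric kernel), so neither is more general; the paper's version is somewhat shorter because it avoids the explicit tail domination of $\L_r\varphi$.

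There is one genuine slip in your justification of the self-adjointness step $(u,\L_r\varphi)=(\L_r u,\varphi)$. The symmetry $p_{B(x,r)}(t,x,y)=p_{B(x,r)}(t,y,x)$ is the symmetry of the killed transition density for a \emph{fixed} ball, and in any case $P_{B(x,r)}[\varphi](x)$ involves the exit distribution, not the killed kernel; since the ball moves with $x$, this symmetry does not yield the swap you need. The correct argument is that, by translation invariance of the L\'evy process, $P_{B(x,r)}[\varphi](x)=\int P_{B_r}(0,z-x)\varphi(z)\,dz$, i.e.\ $\L_r\varphi=(\mu_r*\varphi-\varphi)/\E^0\tau_{B_r}$ with $\mu_r=P_{B_r}(0,\cdot)$; reflection invariance makes $\mu_r$ even, so $\L_r$ is a symmetric convolution operator and $(u,\L_r\varphi)=(\L_r u,\varphi)$ is just the usual convolution duality. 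Correspondingly, in your Fubini discussion the relevant kernel is $P_{B_r}(0,\cdot)$, not $\nu$: the double integral $\iint |u(x)|\,|\varphi(z)|\,P_{B_r}(0,z-x)\,dx\,dz$ is controlled by splitting in $|x|$, using $\int\mu_r=1$ and $u\in L^1_{loc}$ on the local part and $P_{B_r}(0,w)\le cV^2(r)g^*(|w|-r)$ together with $\E^0\tau_{B_r}\approx V^2(r)$ and $u\in\Y$ on the tail. Finally, you do not need uniform convergence $\L_r\varphi\to\L\varphi$; pointwise convergence (from Dynkin's formula, since $\L\varphi$ is continuous) plus your uniform domination $|\L_r\varphi|\le C(1\wedge g^*)$ already gives $(u,\L_r\varphi)\to(u,\L\varphi)$ by dominated convergence.
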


\begin{proof}
Let $\phi_\epsilon$, $\epsilon>0$ be a standard mollifier.  Since $\phi_\epsilon*u\in C^2$ we have
$$(\phi_\epsilon*u,{\cal L} \varphi)=({\cal L}(\phi_\epsilon*u), \varphi).$$
Since ${\U}$ is an extention of ${\L}$ (see \cite[Theorem 7.26]{MR2962168}) and it is a translation invariant, we conclude 
$${\L}[\phi_\epsilon*u](x)={\U}[\phi_\epsilon*u](x)=\phi_\epsilon*{\U}[u](x)=0$$ 
on $D_\epsilon=\{x\in D:\dist{x,\partial D}>\epsilon\}$. That is 
$\phi_\epsilon*u$ is $\L$-harmonic in $D_\epsilon$. 
Since $u\in \Y$ the distribution ${\L}u$ is properly define. Passing with $\epsilon \to 0$ we obtain (we use ${\cal L}(\phi_\epsilon*u) = \phi_\epsilon*{\cal L}u$ in distributional sense),
$$(u,{\cal L} \varphi)=0,\quad \varphi\in C^\infty_c(D_{\delta}),$$
for every $\delta>0$ which ends the proof.
\end{proof}

We will need the following maximum principle for $\U$.

\begin{lemma}\label{lem:maxpr}
Let $D_1$ be a bounded domain and $h$ a continuous function on $D_1$. Suppose $x_0 \in D_1$ is such that $h(x_0)=\sup_{x \in D_1}h(x)>0$. Then either $h$ is constant on $\mathrm{int}(\mathrm{supp}(\nu)+x_0)$ or $\U[h](x_0)<0$.
\end{lemma}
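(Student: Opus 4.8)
The plan is to use the probabilistic definition $\U[h](x_0)=\lim_{r\to0}\big(P_{B(x_0,r)}[h](x_0)-h(x_0)\big)/\E^{x_0}\tau_{B(x_0,r)}$ and to analyse the numerator through the Ikeda--Watanabe formula \eqref{eq:Poisson} on the $C^{1,1}$ ball $B(x_0,r)$. Writing $\tau=\tau_{B(x_0,r)}$ and taking $r$ small enough that $\overline{B(x_0,r)}\subset D_1$, I would split $P_{B(x_0,r)}[h](x_0)-h(x_0)=\E^{x_0}[h(X_\tau)-h(x_0)]$ according to whether $X$ leaves $B(x_0,r)$ continuously (then $X_\tau\in\partial B(x_0,r)\subset D_1$, so $h(X_\tau)\le h(x_0)$ and this part is $\le0$) or by a jump; for the jump part \eqref{eq:Poisson} and \eqref{eq:G1} give
\[
\E^{x_0}\big[h(X_\tau)-h(x_0);\,X_\tau\notin\overline{B(x_0,r)}\big]=\int_{B(x_0,r)}G_{B(x_0,r)}(x_0,z)\int_{\overline{B(x_0,r)}^{\,c}}\big(h(w)-h(x_0)\big)\,\nu(w-z)\,\pd{w}\,\pd{z}.
\]
Since $h\le h(x_0)$ on everything $X_\tau$ can reach, both terms are nonpositive, so $\U[h](x_0)\le0$ unconditionally, and the task reduces to extracting a strictly negative contribution.

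Assume now $h$ is not constant on $U:=\mathrm{int}(\mathrm{supp}(\nu)+x_0)$. I would first note that, since $\nu$ is comparable to the nonincreasing $g(|\cdot|)$ as in \eqref{eq:gcompnu} and $\int g=\infty$, the support of the Lévy measure $\nu(z)\pd{z}$ is a closed ball centred at the origin (possibly $\Rd$), so $U$ is an open ball centred at $x_0$, and $g$, hence $\nu$, is bounded below by a positive constant on every compact subset of $\mathrm{int}(\mathrm{supp}(\nu))$ avoiding the origin. Non-constancy of $h$ on $U$ together with $h\le h(x_0)=\sup_{D_1}h$ on $U$ yields a point $y_1\in U$, necessarily $y_1\ne x_0$, and an $\eps>0$ with $h<h(x_0)-\eps$ on a ball $B(y_1,\rho)$ whose closure is a compact subset of $U$ disjoint from $\overline{B(x_0,\rho)}$.

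Then, for $r<\rho$, in the displayed jump integral I would retain only the inner contribution from $w\in B(y_1,\rho)$: there $h(w)-h(x_0)\le-\eps$, while $w-z$ stays in a fixed compact subset of $\mathrm{int}(\mathrm{supp}(\nu))\setminus\{0\}$ as $z$ runs over $B(x_0,r)$, hence $\nu(w-z)\ge c_1>0$ with $c_1$ independent of $r$. Every other piece (the rest of the jump integral and the continuous-exit term) is $\le0$ by the first step, so using $\int_{B(x_0,r)}G_{B(x_0,r)}(x_0,z)\,\pd{z}=\E^{x_0}\tau$ from \eqref{eq:G1} we obtain
\[
\E^{x_0}[h(X_\tau)-h(x_0)]\le-\eps\,c_1\,|B(y_1,\rho)|\,\E^{x_0}\tau.
\]
Dividing by $\E^{x_0}\tau>0$ and letting $r\to0$ gives $\U[h](x_0)\le-\eps\,c_1\,|B(y_1,\rho)|<0$ (the same bound holding for $\limsup_{r\to0}$ if existence of the limit is not assumed a priori), which is the second alternative.

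The main obstacle I expect is not a single estimate but getting the boundary bookkeeping right. One has to be sure that $h\le h(x_0)$ on the \emph{entire} set reachable by one jump out of $B(x_0,r)$ --- essentially $\mathrm{supp}(\nu)+x_0$, not just $D_1$ --- which is precisely where the way $D_1$ is chosen relative to $\mathrm{supp}(\nu)$ (or a global maximum hypothesis) is used; and one has to dispose of the event that $X$ exits $B(x_0,r)$ continuously through $\partial B(x_0,r)$, which the argument above sidesteps by noting its contribution has the correct sign rather than by proving it has probability zero. The only structural input from $\nu$ beyond \eqref{eq:Poisson} is the uniform lower bound $\nu(w-z)\ge c_1$ near the support, which rests on $\nu$ being comparable to the nonincreasing $g(|\cdot|)$.
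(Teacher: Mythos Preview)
Your argument is correct and follows essentially the same route as the paper's proof: both bound $h(x_0)-P_{B(x_0,s)}[h](x_0)$ from below by restricting the Poisson integral to a small ball $B(y_1,\rho)\subset\mathrm{int}(\mathrm{supp}(\nu)+x_0)$ where $h(x_0)-h\ge\eps$, then use a uniform lower bound on the kernel there together with \eqref{eq:G1} to produce a factor $\E^{x_0}\tau_{B(x_0,s)}$; the paper invokes the ready-made Poisson bound \eqref{eq:PoissonUp} while you unpack it via Ikeda--Watanabe and $\nu\approx g$, which amounts to the same thing. Your flagged concern that one needs $h\le h(x_0)$ on all of $\mathrm{supp}(\nu)+x_0$ (not just $D_1$) is well spotted and is equally implicit in the paper's proof --- it is satisfied in the only application (Lemma~\ref{lem:harm1}), where $h$ vanishes outside $D_1$.
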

\begin{proof}
Let  $x\in  \mathrm{int}(\mathrm{supp}(\nu)+x_0)$. If $h(x)<h(x_0)$ there exists $r>0$ such that $h(x_0)-h(z)\geq (h(x_0)-h(x))/2$ for $z\in B(x,2r)\subset  \mathrm{int}(\mathrm{supp}(\nu)+x_0)$. For $s<r$, by the Ikeda-Watanabe formula and \eqref{eq:G1} we have
\begin{align*}
h(x_0)-P_{B(x_0,s)}[h](x_0)&= \int_{B(x,s)^c}P_{B(x_0,s)}(x_0,z) (h(x_0)-h(z)) \pd{z}\\&\geq (h(x_0)-h(x))/2 \int_{B(x,r)}P_{B(x_0,s)}(x_0,z) \pd{z}\\
&\geq c  (h(x_0)-h(x)) \int_{B(x,r)}g(|z-x_0|+s)\pd{z}\mathbb{E}^{x_0}\tau_{B(x_0,s)}\\
&\geq c  (h(x_0)-h(x)) |B(x,r)|g(|x-x_0|+2r)\mathbb{E}^{x_0}\tau_{B(x_0,s)}.
\end{align*}
This implies
$$-{\U}[u](x_0)\geq c  (h(x_0)-h(x)) |B(x,r)|g(|x-x_0|+2r)>0.$$
\end{proof}
For $r>0$, denote $s_r(x)= \E^{x}\tau_{B_r}$.
\begin{lemma}\label{lem:cont_s}
There exists a constant $c$ such that, for any $r>0$
\begin{align}\label{eq:ExtauEst}
 c^{-1}V(r-|x|)^2 \le s_r(x)  \le c V(r) V(r-|x|), \qquad |x|<r.
\end{align}
Furthermore, $s_r\in C_0(B_r)$.
\end{lemma}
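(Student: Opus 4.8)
The final statement to prove is Lemma~\ref{lem:cont_s}: the two-sided estimate $c^{-1}V(r-|x|)^2 \le s_r(x) \le cV(r)V(r-|x|)$ for $|x|<r$, together with $s_r \in C_0(B_r)$.

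\medskip

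\textbf{Plan of proof.} The lower bound is immediate: by \eqref{eq:ExtauDest} applied to $D = B_r$ we already have $\E^x\tau_{B_r} \ge c^{-1}V^2(\delta_x^{B_r}) = c^{-1}V^2(r-|x|)$, since $\delta_x^{B_r} = r-|x|$. For the upper bound I would exploit scaling to reduce to $r$ bounded, or work directly. By \eqref{eq:G1} we have $s_r(x) = \int_{B_r} G_{B_r}(x,y)\pd y$, and by the Hunt formula \eqref{eq:Green_pot}, $G_{B_r}(x,y) = G(x-y) - \E^y G(x-X_{\tau_{B_r}})$; alternatively I would use the probabilistic route. The key observation is that $s_r$ is itself a bounded solution of $\U[s_r] = -1$ in $B_r$, so I would compare $s_r$ with a suitable supersolution. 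A cleaner approach: use the strong Markov property to show $s_r(x) \le s_r(0) \wedge (\text{something involving }r-|x|)$. Concretely, starting from $x$, the process must first reach $B(x, (r-|x|)/2)^c$ (taking expected time $\lesssim V^2(r-|x|)$ by \eqref{eq:ExtauDest}) and from the hitting point $X'$ at worst needs an additional $\E^{X'}\tau_{B_r} \le \sup_{z}\E^z\tau_{B_r}$. Iterating this and using that each step succeeds in exiting $B_r$ with probability bounded below (by comparing $P_{B(x,\rho)}(x, B_r^c)$ with $P_{B(x,\rho)}(x, \cdot)$ near the far side, via \eqref{eq:PoissonUp} and the Ikeda-Watanabe formula) should give a geometric series: roughly $V(r)/V(r-|x|)$ steps each costing $V^2(r-|x|)$, yielding $s_r(x) \lesssim V(r)V(r-|x|)$. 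I would need to make the "success probability at each scale" estimate precise; that is the main obstacle (see below).

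\medskip

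\textbf{Continuity and vanishing at the boundary.} For $s_r \in C_0(B_r)$: the vanishing at $\partial B_r$ follows from the upper bound $s_r(x) \le cV(r)V(r-|x|) \to 0$ as $|x| \to r$, since $V(0)=0$ and $V$ is continuous at $0$ (because $h(r) \to \infty$ as $r\to 0$, as $h$ is non-increasing to... actually $h(r)\to\infty$ as $r\to 0^+$ since $\int\nu = \infty$, hence $V(r)\to 0$). For continuity in the interior, I would use the representation $s_r(x) = G\ast \mathbbm{1}_{B_r}(x) - \E^x[G\ast\mathbbm{1}_{B_r}(X_{\tau_{B_r}})]$ or directly the resolvent/Dynkin characterization together with regularity of $p_{B_r}$. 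A soft argument: $s_r(x) = \int_0^\infty \PP^x(\tau_{B_r}>t)\pd t = \int_0^\infty \int_{B_r} p_{B_r}(t,x,y)\pd y\pd t$, and since $p_{B_r}(t,\cdot,\cdot)$ is jointly continuous for $t>0$ (stated after \eqref{eq:Huntp}) and dominated by $p_t$ which is integrable in $t$ near $0$ after integrating in $y$ (using $\int_{B_r}p_t(x-y)\pd y \le 1$ and the upper bound to control the tail), dominated convergence gives continuity of $s_r$ on $B_r$. Combined with the boundary decay this yields $s_r \in C_0(B_r)$.

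\medskip

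\textbf{Main obstacle.} The delicate point is the upper bound $s_r(x) \le cV(r)V(r-|x|)$ with the \emph{correct} interplay of the two arguments (not just $s_r(x)\le cV^2(\diam B_r) = cV^2(2r)$, which is too weak near the boundary and also lacks the $V(r-|x|)$ factor). Making the multiscale iteration rigorous requires a uniform lower bound on the probability that, upon exiting a ball $B(x,\rho)$ with $\rho \approx \mathrm{dist}(x,\partial B_r)$, the process actually lands outside $B_r$ rather than back inside — equivalently controlling $\PP^x(X_{\tau_{B(x,\rho)}} \in B_r^c)$ from below, uniformly in the relevant range. This should follow from the Poisson kernel estimate \eqref{eq:PoissonUp} (lower bound $P_D(x,z) \ge C^{-1}V^2(\delta_x)g(\delta_z + \diam D)$) integrated over an appropriate exterior region, using $\int_{B_r^c}g \approx h$-type identities and the doubling of $g$; but one must be careful that the exit-ball radius scales correctly with $r-|x|$ and that the geometric series closes with the claimed power of $V(r)/V(r-|x|)$.
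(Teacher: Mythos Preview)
Your lower bound is correct and matches the paper exactly: $\tau_{B_r}\ge\tau_{B(x,r-|x|)}$ together with \eqref{eq:ExtauDest}. Your continuity sketch is also fine in spirit (the paper simply invokes \cite[Lemma~2.9]{MR3350043}); just note that the integrable tail of $t\mapsto\PP^x(\tau_{B_r}>t)$ needs to be justified, which is standard for bounded domains.

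The upper bound is where your proposal has a genuine gap. The iteration you describe does not yield $V(r)V(r-|x|)$. If at each ``step'' the process exits $B_r$ with probability bounded below by a \emph{constant} $p>0$, then the expected number of steps is $O(1)$, not $V(r)/V(r-|x|)$; and after the first step the process sits at a random point of $B_r$, so the next step is governed by the \emph{new} distance to the boundary, not by $r-|x|$. Running your recursion honestly gives
\[
s_r(x)\;\le\; cV^2(r-|x|)+(1-p)\sup_{z\in B_r}s_r(z)\;\le\; cV^2(r-|x|)+c'(1-p)V^2(r),
\]
which is only $O(V^2(r))$ near the boundary --- the factor $V(r-|x|)$ is lost. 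To recover the product $V(r)V(r-|x|)$ from a probabilistic iteration one would need either a scale-dependent exit probability $p\approx V(r-|x|)/V(r)$ (not argued, and not what \eqref{eq:PoissonUp} gives directly) or a careful dyadic bookkeeping of how deep into $B_r$ the process jumps at each stage; neither is present in the sketch, and you correctly flag this as the main obstacle.

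The paper bypasses the iteration entirely. Set $\theta=x/|x|$ and compare $B_r$ with the slab $H_r=\{y:\;|y\cdot\theta|<r\}\supset B_r$, so that $s_r(x)\le\E^x\tau_{H_r}$. The exit time of $X$ from $H_r$ equals the exit time of the one-dimensional L\'evy process $Z_t=\theta\cdot X_t$ (characteristic exponent $\psi_Z(s)=\psi(s\theta)$) from the interval $(-r,r)$, started at $|x|$. For one-dimensional processes the sharp estimate $\E^{|x|}\tau^Z_{(-r,r)}\le c\,V_Z(r)V_Z(r-|x|)$ is available from \cite[Proposition~3.5]{MR3007664}, and $V_Z\approx V$ by \cite[Proposition~2.4]{MR3350043} (this is where comparability of $\nu$ with a radial function enters). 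This delivers the upper bound in one stroke, with the correct product structure, and without any multiscale argument.
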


\begin{proof}

By  the  slight modification of the proof of \cite[Lemma $2.9$]{MR3350043} we obtain $s_r\in C_0(B_r)$. 
For $x=0$ \eqref{eq:ExtauEst} follows by \eqref{eq:ExtauDest}. So, let $x\not=0$. Again, by \eqref{eq:ExtauDest}, we have
\begin{align*}
s_r(x) \ge \E^{x}(\tau_{B(x,|x|-r)}) \ge c V^2(|x|-r).
\end{align*}
To get the upper bound let $\theta = x/|x|$ and $H_r = \{ y \in \R^d \colon -r< y \cdot \theta <r\}$. Then, $s_r(x) \le \E^{x}\tau_{H_r}(x) = \E^{|x|}(\tau^Z_{(-r,r)})$, where $\tau^Z$ is the first exit time of a L\'evy process $Z = \theta \cdot X$ with characteristic exponent $\psi_Z(y) = \psi(\theta y)$, $y\in\R$.  Now, by \cite[Proposition 3.5]{MR3007664} we get $ \E^{x}\tau_{H_r}(x) \le cV_Z(r) V_Z(r-|x|)$, where $V_Z$ is a function constructed for the process $Z$ in the same way as $V$ is constructed for $X$. Now, by \cite[Proposition 2.4]{MR3350043} $V_Z \approx V$ and we get the desired result.
\end{proof}

\begin{lemma}\label{lem:cont_G}
For $0<r<R$ the function $y\mapsto G_{B_r}(0,y)$ is continuous on $\R^d\setminus\{0\}$.
\end{lemma}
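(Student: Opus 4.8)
The plan is to use the representation \eqref{eq:Hunt_ap}, namely $G_{B_r}(0,y)=G_{B_R}(0,y)-P_{B_r}[G_{B_R}(0,\cdot)](y)$ for $y\neq 0$, $0<r<R$, and reduce the continuity of $y\mapsto G_{B_r}(0,y)$ to that of the two terms on the right-hand side. The first term is continuous on $B_R\setminus\{0\}$ by the standing assumption of the Appendix, so the whole work goes into the second term, the Poisson integral $y\mapsto P_{B_r}[G_{B_R}(0,\cdot)](y)=\E^y G_{B_R}(0,X_{\tau_{B_r}})$. Outside $\overline{B}_R$ both $G_{B_r}(0,\cdot)$ and $G_{B_R}(0,\cdot)$ vanish, so it suffices to treat $y$ in a neighbourhood of $\overline{B}_r$; for $y\in \overline{B}_r^{\,c}$ the identity reads $0=G_{B_R}(0,y)-P_{B_r}[G_{B_R}(0,\cdot)](y)$ anyway, and continuity there will follow from the same estimates.

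First I would record the pointwise bounds we have available: the lower/upper Green function estimates of Lemmas \ref{lem:lowerestG} and \ref{lem:upperestgrG} (together with \eqref{eq:gradG}), which give $G_{B_R}(0,z)\le G(z)\le c\,V^2(|z|)/|z|^{d}$ near $0$ and the boundedness of $G_{B_R}(0,\cdot)$ away from $0$; and the Ikeda--Watanabe formula \eqref{eq:Poisson} expressing $P_{B_r}[G_{B_R}(0,\cdot)](y)$ as $\int_{B_r}\int_{B_r^c}G_{B_r}(y,z')\,G_{B_R}(0,w)\,\nu(w-z')\,\pd w\,\pd{z'}$ — but more usefully the direct probabilistic form $\E^y G_{B_R}(0,X_{\tau_{B_r}})$. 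The key point is that the exit distribution $X_{\tau_{B_r}}$ never hits $0$: since $0\in B_r$ and $X_{\tau_{B_r}}\in B_r^c$, the integrand $z\mapsto G_{B_R}(0,z)$ is evaluated only at points bounded away from $0$ whenever $y$ stays in a compact subset of $B_r$, on which $G_{B_R}(0,\cdot)$ is bounded and continuous. Thus $P_{B_r}[G_{B_R}(0,\cdot)]$ is a Poisson integral of a bounded function that is continuous on a neighbourhood of the support of the exit measure.

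Then I would invoke continuity of the Poisson operator. Concretely, for $y$ in a relatively compact subset of $B_r$ write $P_{B_r}[G_{B_R}(0,\cdot)](y)=\int_{B_r^c}P_{B_r}(y,w)\,G_{B_R}(0,w)\,\pd w$ using the Ikeda--Watanabe density (valid since $B_r$ is $C^{1,1}$), split the integral into $w\in B_R\setminus B_r$, where $G_{B_R}(0,w)$ is bounded and continuous, and $w\in B_R^c$, where $G_{B_R}(0,w)=0$; for the first piece apply the bound \eqref{eq:PoissonUp}, $P_{B_r}(y,w)\le C\,V^2(2r)\,g(\delta_w^{B_r})$, which is an integrable majorant uniform for $y$ in a compact subset of $B_r$, so dominated convergence yields continuity in the interior. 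Continuity up to $\partial B_r$ and across it — i.e. joint continuity on all of $\R^d\setminus\{0\}$ — follows because $G_{B_r}(0,\cdot)$ vanishes continuously on $\partial B_r$: here I would use the boundary regularity of $s_r(x)=\E^x\tau_{B_r}\in C_0(B_r)$ from Lemma \ref{lem:cont_s} together with the uniform estimate $G_{B_r}(0,w)\le c\,V^2(|w|)/|w|^d$ near $0$ and \eqref{eq:ExtauDest} to control $G_{B_r}(0,y)\to 0$ as $y\to\partial B_r$, matching the value of the right-hand side of \eqref{eq:Hunt_ap} there.

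The main obstacle I expect is the behaviour near $\partial B_r$: establishing that the Poisson integral $\E^y G_{B_R}(0,X_{\tau_{B_r}})$ is genuinely continuous as $y$ crosses $\partial B_r$ (not merely in the interior), which amounts to showing $G_{B_R}(0,y)-G_{B_r}(0,y)$ extends continuously — equivalently that $G_{B_r}(0,\cdot)$ has continuous (zero) boundary values on $\partial B_r$. This is exactly where the $C^{1,1}$/outer cone regularity of the ball and the regularity of exit times enter, via $\PP^y(\tau_{B_r}=0)=1$ for $y\in\partial B_r$ and the continuity statement $s_r\in C_0(B_r)$ of Lemma \ref{lem:cont_s}; once that is in hand, the dominated-convergence argument above closes the proof.
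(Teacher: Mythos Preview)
Your overall strategy matches the paper's: use \eqref{eq:Hunt_ap} to reduce to continuity of $y\mapsto P_{B_r}[G_{B_R}(0,\cdot)](y)$, noting that $G_{B_R}(0,\cdot)$ is continuous by the standing assumption and hence bounded on $\R^d\setminus B_r$. The paper then simply invokes the proof of \cite[Theorem~1.3]{MR1329992} (with \cite[Proposition~4.4.1]{MR2152573} replacing the Brownian-specific input) to obtain continuity of the Poisson integral of a bounded function.

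Your execution, however, has a genuine gap. You assert that \eqref{eq:PoissonUp} furnishes the integrable majorant $C\,V^2(2r)\,g(\delta_w^{B_r})$ on $B_R\setminus B_r$, but this function is \emph{not} integrable near $\partial B_r$: in radial coordinates
\[
\int_{B_R\setminus B_r} g(|w|-r)\,\pd w \;\approx\; r^{d-1}\int_0^{R-r} g(u)\,\pd u,
\]
and for any admissible $g$ with $g(u)\asymp u^{-d-\alpha}$ (the stable-like case) one has $\int_0^{\epsilon} u^{-d-\alpha}\,\pd u=\infty$. So dominated convergence as written fails. A workable repair is to peel off a thin shell $\{r<|w|<r+\epsilon\}$, bound its contribution by $\|G_{B_R}(0,\cdot)\|_{L^\infty(B_r^c)}\cdot\PP^y(r<|X_{\tau_{B_r}}|<r+\epsilon)$, and show this is small \emph{uniformly} for $y$ in compacts of $B_r$, while applying dominated convergence on the complement where $g(\delta_w)$ is bounded; but that uniform-smallness step is exactly what the cited references supply and is not free.

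Your boundary argument is also not self-contained. You want $G_{B_r}(0,y)\to 0$ as $y\to\partial B_r$ and gesture at $s_r\in C_0(B_r)$, but the passage from $s_r$ to a bound on $G_{B_r}(0,\cdot)$ is precisely Lemma~\ref{lem:GreenExit}, which in the paper is proved \emph{after} Lemma~\ref{lem:cont_G} and explicitly uses it; invoking it here would be circular. The correct route at the boundary (and the one underlying the cited references) is the regularity statement $\PP^{y_0}(\tau_{B_r}=0)=1$ for $y_0\in\partial B_r$, which you do mention: it gives $X_{\tau_{B_r}}\to y_0$ in law as $y\to y_0$, hence $\E^y G_{B_R}(0,X_{\tau_{B_r}})\to G_{B_R}(0,y_0)$ by bounded convergence, and then \eqref{eq:Hunt_ap} yields $G_{B_r}(0,y)\to 0$.
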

\begin{proof}
First, let us observe that by \eqref{eq:Hunt_ap} it remains to prove that $P_{B_r}[G_{B_R}(0,\cdot)]$ is continuous. We note that since $G_{B_R}(0,\cdot)$ is continuous, it is bounded on $\R^d\setminus B_r$. Therefore, the remaining part can be proved by repeating the proof of \cite[Theorem 1.3]{MR1329992}, where instead of Proposition 1.19 therein one should use \cite[Proposition 4.4.1]{MR2152573}.
\end{proof}
By Lemma \ref{lem:cont_G}, $G_{B_r}(0,\cdot)$ is continuous on $\partial B_r$. We will use it to obtain the following decay of the Green function near the boundary. 
\begin{lem}\label{lem:GreenExit}
Let  $0<r<R$. Then,  
$$G_{B_r}(0,y)\leq CV(r-|y|), \quad |y|>3r/4,$$
where $C=c(d)V(r)\left(r^{-d}+V^{-2}(r)\sup_{|z|\geq r/2}G_{B_r}(0,z)\right)$.
\end{lem}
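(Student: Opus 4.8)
The plan is to use the mean value property of $y\mapsto G_{B_r}(0,y)$ on the annulus $A:=B_r\setminus\overline{B}_{3r/4}$, which holds by the proof of Lemma \ref{lem:harmG} (cf.\ the remark following \eqref{eq:Hunt_ap}). One may assume $3r/4<|y|<r$ (for $|y|\ge r$ both sides vanish), and in fact that $|y|>7r/8$: when $3r/4<|y|\le 7r/8$ we have $r-|y|\ge r/8$, so $V(r-|y|)\ge V(r)/8$ by \eqref{subadd}, and since $G_{B_r}(0,y)\le M:=\sup_{|z|\ge r/2}G_{B_r}(0,z)$ (finite by Lemma \ref{lem:cont_G}) the bound is then trivial. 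For $|y|>7r/8$ we have $y\in A$, and since $G_{B_r}(0,\cdot)$ vanishes on $B_r^c$, $A$ is $C^{1,1}$ (so $X_{\tau_A}$ has an absolutely continuous law on $\overline{A}^{c}$ with density $P_A(y,\cdot)$), and $\overline{A}^{c}\cap B_r=B_{3r/4}$, the mean value property gives
$$G_{B_r}(0,y)=\E^y G_{B_r}(0,X_{\tau_A})=\int_{B_{3r/4}}G_{B_r}(0,z)\,P_A(y,z)\,\pd z .$$
I would split this into the integral over $\{r/2\le|z|<3r/4\}$ and the integral over $B_{r/2}$ and estimate the two pieces separately.

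On $\{r/2\le|z|<3r/4\}$ I would bound $G_{B_r}(0,z)\le M$, so that this piece is at most $M\,\PP^y(X_{\tau_A}\in B_{3r/4})$, and extract the decay from a first-passage estimate. If $X_{\tau_A}\in B_{3r/4}$ then the process reaches $B_{3r/4}$ strictly before leaving $B_r$; writing $T$ for the hitting time of $B_{3r/4}$ and using the strong Markov property together with \eqref{eq:ExtauEst},
$$\E^y\tau_{B_r}\ge\E^y\!\big[\tau_{B_r}-T;\,T<\tau_{B_r}\big]=\E^y\!\big[\E^{X_T}\tau_{B_r};\,T<\tau_{B_r}\big]\ge c\,V^2(r)\,\PP^y(T<\tau_{B_r}),$$
since $|X_T|\le 3r/4$ forces $\E^{X_T}\tau_{B_r}\ge c^{-1}V^2(r/4)\ge c'V^2(r)$ by \eqref{eq:ExtauEst} and \eqref{subadd}. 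Combined with $\E^y\tau_{B_r}\le cV(r)V(r-|y|)$ (again \eqref{eq:ExtauEst}) this yields $\PP^y(X_{\tau_A}\in B_{3r/4})\le cV(r-|y|)/V(r)$, hence a contribution $\le cM\,V(r-|y|)/V(r)$.

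On $B_{r/2}$ I would use that $|z-w|>r/4$ for every $w\in A$, so by \eqref{eq:gcompnu} and monotonicity of $g$ we get $\nu(z-w)\le c\,g(r/4)$ and therefore $P_A(y,z)=\int_A G_A(y,w)\nu(z-w)\,\pd w\le c\,g(r/4)\,\E^y\tau_A\le c\,g(r/4)\,V(r)V(r-|y|)$, using $A\subset B_r$, \eqref{eq:G1} and \eqref{eq:ExtauEst}. Since $\int_{B_{r/2}}G_{B_r}(0,z)\,\pd z\le\int_{B_r}G_{B_r}(0,z)\,\pd z=\E^0\tau_{B_r}\le cV^2(r)$ by \eqref{eq:G1} and \eqref{eq:ExtauDest}, and since the definition of $h$ gives $1/V^2(r)=h(r)\ge\int_{B_{r/4}\setminus B_{r/8}}(|x|^2/r^2)g(|x|)\,\pd x\ge c\,r^d g(r/4)$, i.e.\ $g(r/4)\le c/(r^d V^2(r))$, this piece is $\le c\,(V(r)/r^d)\,V(r-|y|)$. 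Adding the two estimates gives $G_{B_r}(0,y)\le c\big(M/V(r)+V(r)/r^d\big)V(r-|y|)=c\,V(r)\big(r^{-d}+V^{-2}(r)M\big)V(r-|y|)$, which is the assertion.

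The one step that is not bookkeeping is producing the factor $V(r-|y|)/V(r)$ in $\PP^y(X_{\tau_A}\in B_{3r/4})$: one cannot bound this probability through a L\'evy-system computation on $A$, because $\partial B_{3r/4}$ can be reached from $A$ by arbitrarily short jumps and no integrable jump rate is available; the trick is to pass to the first-passage event $\{T<\tau_{B_r}\}$ and play the upper and lower bounds in \eqref{eq:ExtauEst} against each other. Everything else reduces to standard facts: absolute continuity of $X_{\tau_A}$ for the $C^{1,1}$ set $A$, $G_{B_r}(0,\cdot)\equiv0$ on $B_r^c$, and the mean value property of $G_{B_r}(0,\cdot)$ on $B_r\setminus\overline{B}_{3r/4}$ recorded after \eqref{eq:Hunt_ap}.
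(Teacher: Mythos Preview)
Your proof is correct and takes a genuinely different route from the paper's argument. The paper proves the lemma by a barrier/comparison method: it truncates $w=G_{B_r}(0,\cdot)$ to $w_n=w\wedge n$, computes $\mathcal{U}[w_n]\ge -c_1 g(r/4)V^2(r)$ on the annulus $\{3r/4\le|v|<r\}$, notes $\mathcal{U}[s_r]=-1$, and then chooses $a$ so that $\mathcal{U}[as_r-w_n]<0$ on the annulus while $as_r-w_n>0$ on $B_{3r/4}$; the maximum principle (Lemma~\ref{lem:maxpr}) then forces $w_n\le a s_r$, and the upper bound in Lemma~\ref{lem:cont_s} finishes. Your approach instead exploits the MVP of $G_{B_r}(0,\cdot)$ in the annulus directly, writing $G_{B_r}(0,y)=\E^y G_{B_r}(0,X_{\tau_A})$ and splitting the expectation according to whether $X_{\tau_A}$ lands in $B_{3r/4}\setminus B_{r/2}$ or in $B_{r/2}$. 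The boundary decay is produced probabilistically via the first-passage inequality $\PP^y(T<\tau_{B_r})\le cV(r-|y|)/V(r)$, obtained by playing the two sides of \eqref{eq:ExtauEst} against each other; the deep piece uses Ikeda--Watanabe and the same elementary bound $g(r/4)\le c\,r^{-d}V^{-2}(r)$ that the paper uses.

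Both arguments rest on Lemma~\ref{lem:cont_s}, Lemma~\ref{lem:cont_G}, and the MVP of $G_{B_r}(0,\cdot)$ recorded after \eqref{eq:Hunt_ap}. The paper's method additionally requires the Dynkin-operator maximum principle (Lemma~\ref{lem:maxpr}) and the truncation device; your method bypasses these at the cost of the hitting-probability computation, which is short and self-contained. A minor remark: the preliminary reduction to $|y|>7r/8$ is harmless but unnecessary, since the MVP representation and both estimates apply verbatim for every $y\in A$; also, any potential mass of $X_{\tau_A}$ on $\partial B_{3r/4}$ is absorbed in your ``first piece'' bound $M\,\PP^y(T<\tau_{B_r})$, so the absolute continuity of the exit distribution is not actually needed there.
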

\begin{proof}
Fix $r>0$  and $w(y)=G_{B_r}(0,y)$. Observe that by the strong Markov property ${\U}[s_r](x)=-1$, $x\in B_r$. Denote $w_n(y)=w(y)\wedge n$. We take $n$ so large that $w_n(y)=w(y)$ on $B_r\setminus B_{r/2}$ (it is possible by the continuity of $G_{B_R}(0,\cdot)$).  Since $w$ satisfies MVP in $B_r\setminus \overline{B}_{3r/4}$, by \eqref{eq:G1} and \eqref{eq:ExtauEst} we have
\begin{align*}
{\U}[w_n](v)&={\U}[w_n-w](v)={\cal L}[w_n-w](v)=\int_{B_{r/2}}(w_n(z)-w(z))\nu(z-v) \pd{z}&\\
&\geq -cg(r/4)\int_{B}w(z) \pd{z} \geq -c_1 g(r/4)V^2(r),&  3r/4 \leq |v|<r.
\end{align*}
That is for $a>c_1 g(r/4)V^2(r)$,
$${\U}[as_{r} - w_n](v)<0,\qquad 3r/4 \leq |v|<r.$$
Furthermore, by \eqref{eq:ExtauEst}
$$s_r(v)\geq c V^2(r-|v|)\geq c_2 V^2(r), \qquad |v|<3r/4.$$
Hence,
$as_r(v) - w_n(v)>0$ for $|v|<3r/4$ if $a>n/(c_2V^2(r))$. 

Since $w_n = s_r=0$ on $B_r^c$, by the maximum principle for ${\U}$ (Lemma \ref{lem:maxpr}) and the continuity of $as_r-w_n$ on $\R^d \setminus \{0\}$, by Lemmas \ref{lem:cont_s} and \ref{lem:cont_G}, we obtain that $w_n\leq a s_r$ for $a=c_1 g(r/4)V^2(r)+n/(c_2V^2(r))$. Since $w_n(y)=w(y)$ on $B_r\setminus B_{r/2}$ and $g(r)\leq c(d)r^{-d}V^{-2}(r)$ we get 
$$G_{B_r}(0,y)\leq Cs_r(y), \quad |y|>3r/4,$$
where $C=c(d)\left(r^{-d}+V^{-2}(r)\sup_{|z|\geq r/2}G_{B_r}(0,z)\right)$. Now, the assertion follows by \eqref{eq:ExtauEst}.

\end{proof}

We define 
$$\bar{P}_r(z)=4r^{-1}\int^{r/2}_{r/4}P_{B_s}(0,-z)ds, \quad z\in \Rd.$$
The function $\bar{P}_r$ is called a regularized Poisson kernel. In the next lemma we show that like $P_{B(0,r)}(0,z)$, it reproduces functions satisfying MVP but in contrary to the latter, $\bar{P}_r$ is bounded.

\begin{lem}\label{lem:PoissonMean} Let $0<r\leq1$. Then,  $\bar{P}_r$ is bounded and  for every $u$ satisfying MVP in $B(x_0,r)$ we have
$$u(x)=\bar{P}_r*u(x),\quad |x-x_0|<r/2.$$
Furthermore, if $g^*$ is positive 
\begin{align}\label{eq:Prest}
\bar{P}_r(z)\leq c g^*(|z|)\mathbf{1}_{|z|\ge r/4}, \qquad z\in \Rd.
\end{align}
If $g$ is only nonnegative, \eqref{eq:Prest} holds only for large $|z|$.
\end{lem}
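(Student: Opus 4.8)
The plan is to establish three assertions in turn: the reproducing property, the boundedness of $\bar P_r$, and the pointwise bound \eqref{eq:Prest}. For the reproducing property, I would start from the fact that $u$ satisfies MVP in $B(x_0,r)$, so for every $s\in(r/4,r/2)$ and every $x$ with $|x-x_0|<s$ we have $u(x)=P_{B(x,s)}[u](x)=\int_{\R^d} P_{B_s}(0,y-x)u(y)\pd y$ after translating so that the ball is centered at $x$ (using translation invariance of the process and $P_{B(x,s)}(x,y)=P_{B_s}(0,y-x)$). Since $|x-x_0|<r/2\le s$ fails in general, I instead fix $|x-x_0|<r/2$ and note $B(x,s)\subset B(x_0,r)$ when $s<r/2$, so MVP in $B(x_0,r)$ gives MVP in $B(x,s)$ by the strong Markov property; hence $u(x)=\int P_{B_s}(0,y-x)u(y)\pd y$ for each such $s$. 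Averaging this identity over $s\in(r/4,r/2)$ against the measure $4r^{-1}\pd s$ and interchanging the order of integration (justified once we know the $s$-averaged kernel is integrable against $|u|$, which follows from $u\in\Y$ and the bound \eqref{eq:Prest} proved below, or more directly from $P_{B_s}[|u|](x)<\infty$ in the MVP definition) yields $u(x)=\bar P_r*u(x)$.

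For boundedness of $\bar P_r$ and the bound \eqref{eq:Prest}, the key input is the Ikeda–Watanabe formula \eqref{eq:Poisson}, which gives $P_{B_s}(0,-z)=\int_{B_s}G_{B_s}(0,y)\nu(-z-y)\pd y$ for $z\in\overline{B_s}^c$. For $|z|\ge r/4$ and $s\in(r/4,r/2)$ we have $z\notin \overline{B_s}$, so this applies. The plan is to split the $y$-integral according to whether $|y|\le s/2$ or $s/2<|y|<s$. On the inner region $|y|\le s/2$ we have $|-z-y|\ge |z|-s/2\ge |z|/2$ (using $|z|\ge r/4> s/2$... more carefully, for $|z|\ge r/4$ and $s\le r/2$ one checks $|z|-s/2\ge$ a fixed fraction of $|z|$), so $\nu(-z-y)\le c\,g(|z|/c')$; by the doubling property of $g^*$ (and $g\le g^*$) this is $\le c\,g^*(|z|)$, and we absorb $\int_{B_{s/2}}G_{B_s}(0,y)\pd y\le \E^0\tau_{B_s}\le cV^2(s)\le cV^2(1)$ into the constant using \eqref{eq:G1} and \eqref{eq:ExtauDest}. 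On the outer shell $s/2<|y|<s$ we use the boundary decay Lemma \ref{lem:GreenExit}, $G_{B_s}(0,y)\le C V(s-|y|)$, together with the crude bound $\nu(-z-y)\le c\,g((|z|-s)\vee 0)$ — but more robustly, since $|y|<s\le 1$, we bound $G_{B_s}(0,y)$ using the preliminary Green function estimates, and integrate; after averaging in $s$ the singularity $V(s-|y|)$ becomes integrable. Assembling both pieces gives $\bar P_r(z)\le c\,g^*(|z|)\mathbf 1_{|z|\ge r/4}$, and boundedness of $\bar P_r$ follows because $g^*$ is bounded on $\{|z|\ge r/4\}$ (it is nonincreasing up to constants past $r_0$, hence bounded near infinity, and bounded on the compact-away-from-$0$ region $r/4\le|z|\le r_0$), while $\bar P_r$ vanishes on $|z|<r/4$ since $P_{B_s}(0,-z)=0$ there for all $s>r/4$.

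The main obstacle I anticipate is the outer-shell estimate $s/2<|y|<s$: here both factors in the Ikeda–Watanabe integrand are delicate — $G_{B_s}(0,y)$ degenerates like $V(s-|y|)$ near $\partial B_s$ and $\nu(-z-y)$ can be as large as $g$ of something comparable to $|z|$, not smaller. One must be careful that the product, integrated over the shell and then averaged in $s\in(r/4,r/2)$, still produces a clean bound of the form $c\,g^*(|z|)$; the averaging in $s$ is precisely what tames the otherwise non-integrable $\int_{s/2<|y|<s}V(s-|y|)\pd y$-type contribution localized near the moving boundary. The case distinction between $g^*$ positive everywhere and $g$ vanishing for large arguments is handled by noting that in the latter case $\nu(-z-y)=0$ once $|z|$ is large enough relative to $s$, so \eqref{eq:Prest} degenerates to the trivial statement that $\bar P_r(z)=0$ for large $|z|$, which is why the lemma only claims \eqref{eq:Prest} for large $|z|$ in that case.
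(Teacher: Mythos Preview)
Your reproducing-property argument and the treatment of $|z|\ge r$ (where $|-z-y|\ge |z|-s\ge r/2$ so $\nu$ is harmless) are fine and match the paper. The gap is in the outer-shell estimate for $r/4\le |z|<r$. Your ``crude bound'' $\nu(-z-y)\le c\,g((|z|-s)\vee 0)$ is useless here: when $s\uparrow |z|$ the right side blows up, and your fallback (``bound $G_{B_s}$, integrate, the $s$-average makes $V(s-|y|)$ integrable'') never says what happens to the $\nu(-z-y)$ factor. Integrating $V(s-|y|)$ over the shell alone does \emph{not} control the product, because $\nu(-z-y)$ is singular precisely where $y$ is near $-z\in\partial B_{|z|}$ and $s$ is near $|z|$.

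The paper's fix is a sharper decomposition and an explicit use of the $s$-average. For $r/4<|z|<r$ it splits the Ikeda--Watanabe integral not at $|y|=s/2$ but into $\{|y|\le 3s/4\}\cup\{|z-y|\ge r/16\}$ (where $\nu(z-y)\le g(r/16)$ is bounded and one just uses $\int G_{B_s}=\E^0\tau_{B_s}$) versus the genuinely dangerous region $\{|y|>3s/4,\ |z-y|<r/16\}$. On the latter it keeps $g(z-y)$ in the integrand, applies Lemma~\ref{lem:GreenExit} (which needs $|y|>3s/4$, not $s/2$), and then swaps the $s$ and $y$ integrations. The point is that for fixed $y$,
\[
\int_{|y|}^{|z|} V(s-|y|)\,ds \;\le\; (|z|-|y|)\,V(|z|-|y|)\;\le\; |z-y|\,V(|z-y|),
\]
so the $s$-average converts $V(s-|y|)$ into a factor that pairs with $g(z-y)$ to give $\int_{|w|<r/16}|w|\,V(|w|)\,g(w)\,dw<\infty$ (by \cite[Proposition~3.4]{MR3350043}). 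This is the missing idea: the $s$-averaging does not merely make $V(s-|y|)$ integrable in $y$ --- it produces a factor of $|z-y|V(|z-y|)$ that compensates the singularity of $g(z-y)$. For $|z|\ge r$ the paper bypasses Ikeda--Watanabe entirely and just invokes \eqref{eq:PoissonUp}.
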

\begin{proof}
Fix $x_0 \in \R^d$ and let $u_0(x) = u(x+x_0)$. Then, $u_0$ satisfies MVP on $B_r$. Note that $P_{B_s}(0,-z)=0$ for $|z|\leq s$ and $P_{B_s}[|u|]<\infty$, $s<r$. For $|x-x_0|<r$, we have
\begin{align*}
\frac{r}{4}\bar{P}_r*u(x)&=\frac{r}{4}\bar{P}_r*u_0(x-x_0) = \int_{\Rd} \int^{r/2}_{r/4}P_{B_s}(0,z-x+x_0)u_0(z)ds dz \\
&= \int^{r/2}_{r/4}\int_{\Rd}P_{B(x-x_0,s)}(x-x_0,z)u_0(z)dzds
=   \int^{r/2}_{r/4}u_0(x-x_0)ds= \frac{r}{4} u(x).
\end{align*}
It remains to prove bounds for $\bar{P}_r$. For $|z|<r/4$ we have $\bar{P}_r(z)=0$. Let  $r/4<|z|<r$. Observe that for $r/4<s<r/2$, by Lemmas  \ref{lem:GreenExit}, \ref{lem:cont_s} and \ref{lem:cont_G} we have
\begin{align*}P_{B_s}(0,z)&\leq cg(r/16)\int_{\{|y|\leq 3s/4 \}\cup\{ |z-y|\geq r/16 \}}G_{B_s}(0,y)dy + C\int_{|y|>3s/4,|z-y|<r/16}V(s-|y|)g(z-y)dy\\
&\leq c_1g(r/16)V^2(r)+c_2\int_{|y|>3s/4,|z-y|>r/16}V(s-|y|)g(z-y)dy,\end{align*}
where $c_2=c(d)V(r)\left(r^{-d}+V^{-2}(r)\sup_{|z|\geq r/8}G_{B_r}(0,z)\right)<\infty$.
Hence,
\begin{align*}
\frac{r}{4}\bar{P}_r(z)&\leq c_1rg(r/16)V^2(r)/4+c_2\int^{r/2\wedge|z|}_{r/4}\int_{3s/4< |y|< s}1_{B(z,r/16)}(y)V(s-|y|)g(z-y)dyds\\
&= crg(r/16)V^2(r)+c_2\int_{3r/16< |y|< r/2\wedge |z|}\int^{4|y|/3\wedge r/2\wedge|z|}_{|y|}V(s-|y|)ds1_{B(z,r/16)}(y)g(z-y)dy\\
&\leq crg(r/16)V^2(r)+c_2\int_{3r/16< |y|<  |z|}\int^{|z|}_{|y|}V(s-|y|)ds1_{B(z,r/16)}(y)g(z-y)dy\\
&\leq cg(r/16)rV^2(r) +c_2\int_{ |y-z|<r/16}(|z-y|)V( |z-y|)g(z-y)dy
\end{align*}
By \cite[Proposition 3.4]{MR3350043} we obtain
$$\bar{P}_r(z)\leq c g(r/16)V^2(r)+ c_3
\frac{1}{V(r)}<\infty,\quad |z|<r.$$
For $|z|\geq r$, by \eqref{eq:PoissonUp} we have $P_{B_s}(0,z)\leq cV^2(s)g(|z|-s)$. If $g^*$ is positive, 
$g(|z|-s)\leq cg^*(|z|)$, which gives \eqref{eq:Prest} in this case. If $g^*(u)=0$, then  $\bar{P}_r(z)=0$, for $|z|\geq u+r/2$. 
\end{proof}

\begin{lemma}\label{lem:contMVP}
Let $u \in \Y$. If $u$ satisfies MVP in an open set $D$, then $u \in C(D)$. 
\end{lemma}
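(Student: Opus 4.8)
The plan is to exploit the regularized Poisson kernel $\bar P_r$ from Lemma~\ref{lem:PoissonMean}: it is \emph{bounded}, it has a $g^{*}$-type tail, and it reproduces MVP functions locally. Fix $x_0\in D$ and pick $r\in(0,1)$ small enough that $\overline{B(x_0,r)}\subset D$ and $r<R$. Since $u$ satisfies MVP in $D$, it also satisfies MVP in $B(x_0,r)$: by the strong Markov property $P_D[u]=P_{B(x_0,r)}[u]$ on $B(x_0,r)$, and, by Jensen's inequality, $P_{B(x_0,r)}[|u|]\le P_D[|u|]<\infty$. Hence Lemma~\ref{lem:PoissonMean} gives
$$u(x)=(\bar P_r*u)(x),\qquad |x-x_0|<r/2,$$
so it suffices to show $\bar P_r*u\in C(B(x_0,r/2))$; since $x_0\in D$ is arbitrary this yields $u\in C(D)$.

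The next step is to split $u=u\mathbbm{1}_{B_M}+u\mathbbm{1}_{B_M^{c}}$ for a large radius $M$ (with $B_M=B(0,M)$), to be fixed later. The inner piece $u\mathbbm{1}_{B_M}$ lies in $L^{1}(\R^d)$ because $u\in L^{1}_{loc}$, and $\bar P_r\in L^{\infty}$; therefore $\bar P_r*(u\mathbbm{1}_{B_M})$ is uniformly continuous on $\R^d$ by the elementary estimate
$$\left| (\bar P_r*(u\mathbbm{1}_{B_M}))(x+h)-(\bar P_r*(u\mathbbm{1}_{B_M}))(x) \right|\le \|\bar P_r\|_{\infty}\, \| (u\mathbbm{1}_{B_M})(\cdot+h)-(u\mathbbm{1}_{B_M})(\cdot) \|_{L^{1}(\R^d)},$$
whose right-hand side tends to $0$ as $h\to 0$ by continuity of translation in $L^{1}$. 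Note that \emph{no} regularity of $\bar P_r$ itself is used here; this is why the $L^{1}$--$L^{\infty}$ pairing is preferable to a dominated-convergence argument (which would require pointwise continuity of $\bar P_r$, not available).

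For the outer piece I would use the tail bound $\bar P_r(z)\le c\,g^{*}(|z|)$ (valid for $|z|$ large, by Lemma~\ref{lem:PoissonMean}). For $x\in B(x_0,r/2)$ and $|y|\ge M$ one has $|x-y|\ge|y|-C_0$ with $C_0:=|x_0|+\tfrac12$ a \emph{fixed} constant, and $g^{*}(\rho-C_0)\approx g^{*}(\rho)$ for large $\rho$ (iterate the unit-shift comparison $g^{*}(\rho+1)\approx g^{*}(\rho)$ a bounded number $\lceil C_0\rceil$ of times). Hence, for $M$ large and uniformly in $x\in B(x_0,r/2)$,
$$\left| (\bar P_r*(u\mathbbm{1}_{B_M^{c}}))(x) \right|\le\int_{B_M^{c}}\bar P_r(x-y)\,|u(y)|\,dy\le c_1\int_{B_M^{c}}g^{*}(|y|)\,|u(y)|\,dy,$$
and the right-hand side tends to $0$ as $M\to\infty$: for $|y|\ge 1$ the function $g^{*}(|y|)$ is bounded by monotonicity, and $\int_{|y|\ge 1}g^{*}(|y|)\,|u(y)|\,dy<\infty$, which is contained in $\|u\|_{\Y}<\infty$ (split according to $g^{*}\le 1$ or $g^{*}>1$). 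Combining the two pieces: given $\varepsilon>0$, first fix $M$ so the outer piece is $<\varepsilon/3$ uniformly on $B(x_0,r/2)$; then for $x$ close to any $x_1\in B(x_0,r/2)$ the inner piece oscillates by less than $\varepsilon/3$, so $|(\bar P_r*u)(x)-(\bar P_r*u)(x_1)|<\varepsilon$, which is the desired continuity.

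The main obstacle --- and the only place the special properties of $g^{*}$ are genuinely needed --- is this far-field step: one must \emph{not} bound $|x-y|$ below by a multiple of $|y|$, since the comparison $g^{*}(\rho/2)\approx g^{*}(\rho)$ may fail for rapidly decaying majorants, and should instead shift only by the fixed constant $C_0$; this is precisely the situation the hypothesis $g^{*}(\rho+1)\approx g^{*}(\rho)$ for large $\rho$ was introduced to handle.
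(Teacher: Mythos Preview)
Your proof is correct and follows essentially the same route as the paper: represent $u$ locally as $\bar P_r * u$ via Lemma~\ref{lem:PoissonMean}, split $u$ into a compactly supported $L^1$ piece (handled by boundedness of $\bar P_r$) and a far-field piece (handled by the $g^*$ tail estimate and the unit-shift comparison). Your write-up is in fact slightly more explicit than the paper's in two places: you justify why $u$ inherits MVP on the small ball (via the strong Markov property together with $|u(y)|=|P_D[u](y)|\le P_D[|u|](y)$, which is what makes the ``Jensen'' step work), and you spell out why the convolution with the bounded kernel is continuous using continuity of translation in $L^1$ rather than appealing to an unavailable regularity of $\bar P_r$.
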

\begin{proof}
Let $\eps >0$. Since $u \in \Y$, there is $R>0$ such that $\|u\mathbf{1}_{B_R^c}\|_\Y<\eps$. Fix $x_0 \in D$ and let $\rho = 1 \land \dist{x_0,D^c}$ and $B = B(x_0,\rho)$.
Let $u_1 = u\mathbf{1}_{B_R}$ and $u_2 = u-u_1$. By Lemma \ref{lem:PoissonMean}, we have
\begin{align*}
u(x) = \bar{P}_\rho \ast u(x) = \bar{P}_\rho \ast u_1(x) + \bar{P}_\rho \ast u_2(x), \qquad |x-x_0|< \rho/2.
\end{align*}
Since $\bar{P}_\rho$ is bounded and $u_1 \in L^1(\R^d)$, $\bar{P}_\rho \ast u_1 \in C(\R^d)$. 

If there is $r_0$ such that $g^*(r) = 0$ for $r>r_0$, then $\bar{P}_\rho(z) = 0$ for $|z|>\rho+r_0$. Thus, by taking sufficiently large $R$, we get $\bar{P} \ast u_2(x) =0$ on $B$. Now, suppose $g^* >0$ on $(0,\infty)$.  By Lemma \ref{lem:PoissonMean}, $\bar{P}_\rho(z) \le c(1 \land g^*(|z|))$. 
Furthermore, there is a constant $c_1 = c_1(B)$ such that $1 \land g^*(|y-x|) \le c_1(1\land g^*(|y|)$ for $x \in B$. Then,
\begin{align*}
|\bar{P}_\rho \ast u_2(x)| \le \int_{B_R^c} \bar{P}_\rho(x-y)|u(y)| \pd{y} \le c_1c_2 \int_{B_R^c} |u(y)| (1 \land g^{*}(y) ) \pd{y} \le c_1c_2 \eps, \qquad x\in B. 
\end{align*}
Hence, for $x \in B$ we have 
\begin{align*}
|u(x_0) - u(x)| \le 2 c_1c_2 \eps + |\bar{P}_\rho \ast u_1(x_0) - \bar{P}_\rho \ast u_1(x)|,
\end{align*}
which gives the continuity of $u$ at $x_0$.
\end{proof}

\begin{lem}\label{lem:harm1}
Assume $u\in \Y\cap C^2(D)$.  If $u$ is $\L$-harmonic in  $D$, it satisfies MPV inside $D$.
\end{lem}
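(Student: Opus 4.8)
The plan is to show first that, for a $C^2$ function, $\L$-harmonicity is equivalent to the pointwise equation $\L u\equiv 0$ on $D$, and then to read off the mean value property from Dynkin's formula applied to a compactly supported $C^2$ modification of $u$. For the first step: if $u\in C^2(D)\cap\Y$ and $x\in D$, the integral \eqref{generator} defining $\L u(x)$ converges absolutely — over $\{|z|<1\}$ by the second order Taylor bound for $u$ near $x$ and $\int_{|z|<1}|z|^2\nu(z)\pd z<\infty$, and over $\{|z|\ge1\}$ by $u\in\Y$ together with $\nu(z)\le c\,g^{*}(|z|)$ and the near translation invariance $g^{*}(r+1)\approx g^{*}(r)$ at infinity, which give $\int_{\Rd}|u(y)|\,\nu(y-x)\pd y<\infty$ for each fixed $x\in D$; the same dominations yield $\L u\in C(D)$. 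Since $\nu$ is symmetric and $|\L\varphi(x)|\le C(1\wedge g^{*}(|x|))$ for $\varphi\in C^{\infty}_c(D)$ (so $\int|u|\,|\L\varphi|<\infty$), Fubini gives $(u,\L\varphi)=(\L u,\varphi)$; hence $\int_D\L u\cdot\varphi=0$ for all $\varphi\in C^{\infty}_c(D)$, so $\L u=0$ a.e.\ on $D$ and, by continuity, $\L u\equiv0$ on $D$.

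Next I would fix an open set $U$ relatively compact in $D$ and pick an open $U'$ with $\overline U\subset U'\subset\overline{U'}\subset D$; put $\delta_0=\dist{\overline U,(U')^c}>0$. Choose $\eta\in C^{\infty}_c(D)$ with $\eta\equiv1$ on $\overline{U'}$ and set $\widetilde u=\eta u\in C^{2}_c(\Rd)$, so that $\widetilde u=u$ on $\overline{U'}$, $\widetilde u$ is bounded, and $\L\widetilde u\in C_b(\Rd)$. Since $U$ is bounded, $\E^x\tau_U<\infty$ by \eqref{eq:ExtauDest}, so Dynkin's formula applies to $\widetilde u$ (a genuine $C^2_b$ function, in the domain of the generator with $\L\widetilde u$ bounded):
\[
\E^x\widetilde u(X_{\tau_U})=\widetilde u(x)+\E^x\!\int_0^{\tau_U}\L\widetilde u(X_s)\pd s,\qquad x\in U .
\]
For $y\in U$ the function $\widetilde u-u$ and its gradient vanish at $y$ and on $B(y,\delta_0)\subset U'$, while $\L u(y)=0$, hence $\L\widetilde u(y)=\L(\widetilde u-u)(y)=\int_{(U')^c}(\widetilde u-u)(w)\,\nu(w-y)\pd w$, an integral bounded uniformly in $y\in U$ (on $(U')^c$ one has $|w-y|\ge\delta_0$, so by \eqref{eq:gcompnu}, monotonicity of $g$, $g\le g^{*}$ and $g^{*}(r+1)\approx g^{*}(r)$ one gets $\nu(w-y)\lesssim 1\wedge g^{*}(|w|)$ uniformly in $y\in U$, and $\widetilde u$ is bounded while $u\in\Y$). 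Feeding this into the occupation identity $\E^x\int_0^{\tau_U}f(X_s)\pd s=\int_U G_U(x,y)f(y)\pd y$ (cf.\ \eqref{eq:G1}), Fubini, and the Ikeda--Watanabe identity $\int_U G_U(x,y)\nu(w-y)\pd y=P_U(x,w)$ for $w$ in the exterior of $U$ (valid since $(U')^c\subset\overline U^c$; cf.\ \eqref{eq:Huntp}, \eqref{eq:Poisson}) gives
\[
\E^x\!\int_0^{\tau_U}\L\widetilde u(X_s)\pd s=\int_{(U')^c}(\widetilde u-u)(w)\,P_U(x,w)\pd w .
\]

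Finally I would assemble the pieces. Writing the exit law as $\E^x h(X_{\tau_U})=\E^x[h(X_{\tau_U});X_{\tau_U}\in\partial U]+\int_{\overline U^c}h(w)P_U(x,w)\pd w$, applying it with $h=\widetilde u$, and using that $\widetilde u=u$ at $x$, on $\partial U$ and on $U'\setminus\overline U$, the two displayed identities combine: since $\overline U^c\setminus(U')^c=U'\setminus\overline U$ and $\overline U^c=(U'\setminus\overline U)\sqcup(U')^c$, every term carrying $\widetilde u$ is matched by the corresponding term with $u$, leaving
\[
u(x)=\E^x[u(X_{\tau_U});X_{\tau_U}\in\partial U]+\int_{\overline U^c}u(w)P_U(x,w)\pd w=\E^x u(X_{\tau_U})=P_U[u](x).
\]
The integrals occurring are absolutely convergent: on $(U')^c$ and on $U'\setminus\overline U$ by the bounds noted above, and $\E^x[|u(X_{\tau_U})|;X_{\tau_U}\in\partial U]\le\|u\|_{L^{\infty}(\overline U)}<\infty$ since $u$ is continuous on the compact set $\overline U$; in particular $P_U[|u|](x)<\infty$, so $u$ satisfies MVP in $U$. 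As $U$ was arbitrary among open sets relatively compact in $D$, $u$ satisfies MVP inside $D$.

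The main obstacle, as I see it, is the first step — turning the weak identity $(u,\L\varphi)=0$ into the pointwise equation $\L u\equiv0$, with the attendant tail estimates (through $u\in\Y$ and the majorant $g^{*}$) and the continuity of $\L u$ — together with the careful bookkeeping in the last paragraph, which has to be arranged so that the auxiliary cut-off $\widetilde u$, introduced only so that Dynkin's formula applies to a bona fide $C^2_b$ function, drops out of the final identity; everything else is routine (boundedness of $\L\widetilde u$, the occupation and Ikeda--Watanabe identities, and the absolute convergence estimates).
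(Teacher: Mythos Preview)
Your proof is correct and takes a genuinely different route from the paper's. Both arguments begin by converting distributional $\L$-harmonicity into the pointwise equation $\L u\equiv0$ on $D$ (the paper states this in one line; you argue it more carefully), but then diverge. The paper defines the Poisson extension $\widetilde u=P_{D_1}[u]$ on a $C^{1,1}$ subdomain $D_1$, proves it is continuous up to $\partial D_1$ (via Lemma~\ref{lem:contMVP} and boundary regularity estimates), and then shows $h=\widetilde u-u\equiv0$ by the maximum principle for the Dynkin operator $\U$ (Lemma~\ref{lem:maxpr}), with a chaining argument to cover the case of compactly supported $\nu$. You instead multiply $u$ by a cutoff to obtain $\widetilde u=\eta u\in C^2_c(\Rd)$, apply Dynkin's formula to $\widetilde u$ on an arbitrary bounded open $U$, rewrite the correction $\L\widetilde u(y)=\int_{(U')^c}(\widetilde u-u)(w)\nu(w-y)\pd w$ through the occupation and Ikeda--Watanabe identities, and cancel $\widetilde u$ algebraically to recover $u(x)=P_U[u](x)$. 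Your route is arguably more elementary for this particular lemma: it avoids the maximum principle, the continuity lemma for MVP functions, and the preliminary reduction to $C^{1,1}$ domains, working directly for any relatively compact open $U$ (the boundary term $\E^x[u(X_{\tau_U});X_{\tau_U}\in\partial U]$ is handled explicitly). The paper's route, on the other hand, isolates the maximum principle as a reusable tool and makes transparent that the Poisson extension is the unique continuous function agreeing with $u$ on $D_1^c$.
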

\begin{proof} 
Fix $u \in \Y \cap C^2(D)$, so ${\cal L} u(x)$ can be calculated pointwise for $x \in D$ and consequently, ${\cal L} u (x) = 0$ for $x \in D$. Fix an open set $D_1$ relatively bounded in $D$ and define $\widetilde{u}(x)=P_{D_1}[u](x)$, $x\in \Rd$.  We need too prove that $\widetilde{u} =u$. Since every open set may be approximated by bounded smooth domains (see e.g. \cite[Proposition 8.2.1]{MR2569323}, by the strong Markov property we may and do assume that $D_1$ is a $C^{1,1}$ domain. 

We claim that $\widetilde{u}$ has the mean-value property in $D_1$. To prove it, we only need to show $P_{D_1}[|u|](x)<\infty$. Let $D_2$ be an open set relatively compact in $D$ such that 	$\overline{D_1} \subset D_2$. There continuous  exist functions $u_1$, $u_2$ 
on $D_1^c$ such that $u=u_1+u_2$, $u_1$ is bounded on $D_1^c$ 
and $u_2 \equiv 0$ in $D_2$. We have
		\begin{align*}
	\widetilde{u}(x) = P_{D_1}[u_1](x)+P_{D_1}[u_2](x), 
	\quad x \in \Rd.
	\end{align*}
Note that $u_1 = P_{D_1}[u_1]$ and $u_2 = P_{D_1}[u_2]$ on $D_1^c$. Since $P_{D_1}[u_1]$  satisfy MVP in $D_1$, by Lemma \ref{lem:contMVP} it is continuous in $D_1$. Note that $P_{D_1}[u_1]$ is also continuous as a function of $x$ in 
$\overline{D_1}$. Indeed, let $x_0 \in \partial D$. For $\epsilon>0$ there exists 
$\delta>0$ such that
\begin{align*}
\left| \int_{D_1^c} P_{D_1}(x,z)u_1(z) dz - u_1(x_0) \right| \leq \epsilon + 
2 ||u_1||_{\infty} \mathbb{P}^x \(\left| X_{\tau_{D_1}}-x_0 \right| >\delta \) .
\end{align*}   

Since the second term goes to $0$ as $x \to x_0$ (see \cite[Lemmas 2.1 and 
2.9]{MR3350043}), by arbitrary choice of $\epsilon$ we get the continuity on $\overline{D_1}$.

Furthermore, from monotonicity of $1 \wedge g^*(|x|)$, the Ikeda-Watanabe formula and \eqref{eq:G1} we obtain
	\begin{align} \label{eq:Poissest}
	P_{D_1}(x,z) \leq \big(1 \wedge g^*(\dist{z,D_1} \big) \E^x 
\tau_{D_1}, 
	\quad x \in D_1, \ z \in D_2^c.
	\end{align}
	
Since $u \in \Y$,  and the fact that $g^*(r+1)\approx g^*(r)$, we get $P_{D_1}[|u_2|]<\infty$. Hence $P_{D_1}[u_2]$ satisfies MVP and by Lemma \ref{lem:contMVP}, $P_{D_1}[u_2]$ is continuous in $D_1$. Since $\E^x \tau_{D_1} \in C_0(D_1)$ and $u_2 = 0$ on $\partial D_1 \subset D_2$, by \eqref{eq:Poissest} $P_{D_1}[u_2]$ is continuous in $\overline{D_1}$ as well. Hence, $\widetilde{u}$ is continuous and has the mean-value property in $D_1$. Note that  $\widetilde{u}=u$ on $D_1^c$, since $D_1$ is a $C^{1,1}$ domain. 

\medskip
	
Let $h=\widetilde{u}-u$. We now verify that $h \equiv 0$ so 
that $u=\widetilde{u}$ has the mean-value property in $D_1$. Observe that $h$ is continuous and compactly supported.  Suppose $x_0 \in D_1$ is such that $h(x_0)=\sup_{x \in D_1}h(x)>0$. Since $u\in C^2(D)$ we have $0={\cal L} u(x_0)={\U}[u](x_0)$. Hence, by Lemma \ref{lem:maxpr}, $h$ is constant on $\mathrm{supp}(\nu)+x_0$.  If $D_1\subset \mathrm{supp}(\nu)+x_0$ we get that $h>0$ on $D_1$, which is a contradiction due to continuity of $h$ and the fact that $h \equiv 0$ on $\partial D_1$. Thus, $h \leq 0$. If this is not the case, then we can use the chain rule to get for any $n\in\N$ that $h$ is constant on $n\mathrm{supp}(\nu)+x_0$ and eventually get a contradiction. Similarly, $h$ must be 
non-negative.
\end{proof}

\begin{lem}\label{lem:harm=MVP}
	Let $u \in Y$ be a solution of ${\cal L} u=0$ in $D$ in 
distributional sense. Then $u$ satisfies MPV inside $D$.
\end{lem}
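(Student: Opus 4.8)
The plan is to regularise $u$ by mollification, apply the $C^{2}$ version of the statement (Lemma~\ref{lem:harm1}) to the mollifications, and pass to the limit with the help of the bounded regularised Poisson kernel $\bar P_\rho$ of Lemma~\ref{lem:PoissonMean}. First I would fix a standard mollifier $\phi_\eps$ and put $u_\eps=\phi_\eps\ast u$. A routine convolution estimate, based on the doubling-type property $g^{*}(s)\approx g^{*}(s+1)$ for large $s$, gives $u_\eps\in\Y$ (the degenerate case in which $g^{*}$ vanishes for large arguments being handled exactly as in the proof of Lemma~\ref{lem:contMVP}); since $u_\eps\in C^{\infty}(\Rd)$ has $\Y$-growth, ${\cal L}u_\eps$ is defined pointwise, and exactly as in the proof of Lemma~\ref{lem:MPVtoHarm} one has ${\cal L}u_\eps=\phi_\eps\ast{\cal L}u=0$ in $\mathcal{D}'(D_\eps)$ with $D_\eps:=\{x\in D:\dex>\eps\}$, hence ${\cal L}u_\eps\equiv0$ on $D_\eps$. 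Thus $u_\eps\in\Y\cap C^{2}(D_\eps)$ is ${\cal L}$-harmonic in $D_\eps$, and Lemma~\ref{lem:harm1} yields that $u_\eps$ satisfies MVP inside $D_\eps$.

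Next I would use this to produce a continuous representative of $u$. Fix $x_0\in D$ and $\rho\in(0,1]$ with $\overline{B(x_0,\rho)}\subset D$; for $\eps$ small enough $B(x_0,\rho)\Subset D_\eps$, so $u_\eps$ satisfies MVP in $B(x_0,\rho)$ and Lemma~\ref{lem:PoissonMean} gives $u_\eps=\bar P_\rho\ast u_\eps$ on $B(x_0,\rho/2)$. Since $\bar P_\rho$ is bounded with $\bar P_\rho(z)\le c(1\wedge g^{*}(|z|))$ (Lemma~\ref{lem:PoissonMean}, \eqref{eq:Prest}) and $u\in\Y$, all convolutions here are absolutely convergent, so Fubini gives $\bar P_\rho\ast u_\eps=\phi_\eps\ast(\bar P_\rho\ast u)$; letting $\eps\to0$ and using $\phi_\eps\ast v\to v$ in $L^{1}_{\mathrm{loc}}$ for $v\in\{u,\bar P_\rho\ast u\}$ I obtain $u=\bar P_\rho\ast u$ a.e.\ on $B(x_0,\rho/2)$. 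As in the proof of Lemma~\ref{lem:contMVP}, $\bar P_\rho\ast u$ is continuous (split $u=u\mathbf 1_{B_R}+u\mathbf 1_{B_R^{c}}$; the first convolution is continuous because $\bar P_\rho$ is bounded and $u\mathbf 1_{B_R}\in L^{1}$, the second is uniformly small on compacta for $R$ large by \eqref{eq:Prest} and $u\in\Y$). Patching the resulting local continuous versions (which agree a.e., hence everywhere, on overlaps) I may and do replace $u$ by a continuous representative, so henceforth $u\in C(D)\cap\Y$.

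It then remains to show $u=P_{D_1}[u]$ for every open $D_1$ relatively compact in $D$; by the strong Markov property and the approximation of open sets by smooth domains --- just as in the reduction opening the proof of Lemma~\ref{lem:harm1} --- it suffices to treat $C^{1,1}$ domains $D_1\Subset D$. Fixing such a $D_1$, for $\eps<\dist{\overline{D_1},D^{c}}$ one has $D_1\Subset D_\eps$, hence
\begin{equation*}
u_\eps(x)=\int_{\overline{D_1}^{c}}P_{D_1}(x,z)\,u_\eps(z)\,\pd z,\qquad x\in D_1,
\end{equation*}
and I would let $\eps\to0$. The left side tends to $u(x)$ uniformly on $\overline{D_1}$ by continuity of $u$. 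For the right side I split $\overline{D_1}^{c}$ into a boundary shell $S_\delta=\{\dist{z,D_1}<\delta\}$, a bounded middle part $M_{\delta,R}=\{\dist{z,D_1}\ge\delta\}\cap B_R$, and the far part $B_R^{c}$: on $S_\delta$ both $u$ and $u_\eps$ (for $\eps$ small) are uniformly bounded by continuity of $u$ near $\partial D_1$, while $\int_{S_\delta}P_{D_1}(x,\cdot)\to0$ as $\delta\to0$ since $\int_{\overline{D_1}^{c}}P_{D_1}(x,\cdot)\le1$; on $M_{\delta,R}$ the kernel is bounded and $u_\eps\to u$ in $L^{1}$; on $B_R^{c}$ one has $P_{D_1}(x,z)\le CV^{2}(\diam D_1)\,g(\dist{z,D_1})\le C'g^{*}(|z|)$ for $R$ large by \eqref{eq:PoissonUp}, $g\le g^{*}$ and the doubling-type property of $g^{*}$, whence $\int_{B_R^{c}}P_{D_1}(x,z)|u_\eps-u|\,\pd z\le C''\big(\|u_\eps\mathbf 1_{B_R^{c}}\|_\Y+\|u\mathbf 1_{B_R^{c}}\|_\Y\big)\le C'''\|u\mathbf 1_{B_{R-1}^{c}}\|_\Y\to0$ as $R\to\infty$, uniformly in $\eps\le1$ (same convolution estimate as in the first step). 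Hence the right side converges to $P_{D_1}[u](x)$, and the identical bounds with $|u|$ in place of $u$ give $P_{D_1}[|u|](x)<\infty$; therefore $u(x)=P_{D_1}[u](x)$ for all $x\in D_1$, i.e.\ $u$ satisfies MVP in $D_1$, which finishes the argument.

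The hard part will be the uniform integrability needed to pass the Poisson integral to the limit in the last step: one must control $\int P_{D_1}(x,z)|u_\eps(z)|\,\pd z$ uniformly in $\eps$ simultaneously near $\partial D_1$ --- where $P_{D_1}(x,\cdot)$ is singular but integrable, which is precisely why I first upgrade $u$ to a continuous, hence locally bounded, function --- and at infinity, where the bound rests on the $\Y$-norm and on $g^{*}$ not decaying too fast, so that mollification preserves the tail. Everything else is standard mollification bookkeeping built on Lemmas~\ref{lem:harm1}, \ref{lem:PoissonMean} and~\ref{lem:contMVP}.
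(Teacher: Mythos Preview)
Your argument is correct and follows the same overall strategy as the paper: mollify, apply Lemma~\ref{lem:harm1} to the mollifications, and pass to the limit in the Poisson representation. The difference lies in how the limit is justified. The paper never upgrades $u$ to a continuous function; instead, on a neighbourhood $V_{\rho/2}$ of $\overline{\Omega}$ it writes $\phi_\eps\ast u=\phi_\eps\ast u\ast\bar P_r$ (Lemma~\ref{lem:PoissonMean}) \emph{inside} the Poisson integral, expands the convolution, and bounds $\int_E P_\Omega(x,z)\,|\phi_\eps\ast u(z)|\,\pd z$ by $c\|u\|_\Y\int_E P_\Omega(x,z)\,\pd z$ uniformly in $\eps$, which gives Vitali-type uniform integrability near $\partial\Omega$ in one stroke. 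You instead use $\bar P_\rho$ first to show $u=\bar P_\rho\ast u$ a.e., hence $u\in C(D)$, and then handle the boundary shell by local boundedness of $u$ and $u_\eps$; this is a legitimate and perhaps more readable route, at the cost of an extra intermediate step. Both approaches rely on the same ingredients (Lemmas~\ref{lem:harm1} and~\ref{lem:PoissonMean}, the bound \eqref{eq:PoissonUp}, and the tail control via $g^{*}$), so the difference is organisational rather than conceptual.
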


\begin{proof} 
	Let $\Omega \subset \overline\Omega\subset D$ be a bounded $C^{1,1}$ domain.  
Define $\rho = (1 \wedge \dist{\Omega,D^c})/2$ and let $V=\Omega+B_{\rho}$. For $\epsilon < 
\rho/4$ we consider standard mollifiers $\phi_{\epsilon}$. 
Since ${\cal L}$ is translation-invariant we have that ${\cal L}(\phi_{\epsilon} \ast 
u)= {\cal L} u \ast \phi_{\epsilon}=0$ in $V_{\epsilon}=\{ x \in D\colon 
\dist{x,V^c}>\epsilon \}$ 
	in distributional sense. By Lemma
\ref{lem:harm1} we 
	obtain
	\begin{align*}
	\phi_{\epsilon} \ast u(x) = P_\Omega [ \phi_{\epsilon} \ast u] (x), \quad x 
	\in \Omega.
	\end{align*}
	Note that by \cite[Lemma 2.9]{MR4194536}, for $u \in {\Y}$, $\phi_{\epsilon} \ast u \to u$  in 
$\Y$ as $\epsilon \to 0^+$.  
	Hence, up to the subsequence,
	\begin{align*}
	\lim\limits_{\epsilon \to 0^+} \phi_{\epsilon} \ast u(x) = u(x) \quad 
	\text{a.e.}
	\end{align*}
	Furthermore, since $\phi_{\epsilon} \ast u$ satisfies MVP in $V_{\rho/4}$, 
	 by Lemma \ref{lem:PoissonMean}
	$$\phi_{\epsilon} \ast u(x)= \phi_{\epsilon} \ast u\ast \bar{P}_r(x), \quad x \in V_{\rho/2}$$
	for $r=\rho/8$.  Hence, for any $E \subset \Omega^c$, 
	\begin{align*}
	P_{\Omega}[\left| \phi_{\epsilon} \ast u \right| \mathbf{1}_{V_{\rho/2}\cap E}](x) &= 
\int_{V_{\rho/2} \cap \Omega^c\cap E} \left| \phi_{\epsilon} \ast u(z) \right| 
P_\Omega(x,z) \pd{z} \\ &= \int_{V_{\rho/2}\cap \Omega^c \cap E} \left| 
\phi_{\epsilon} \ast u \ast \bar{P}_{r}(z) \right| P_\Omega(x,z) \pd{z} 
\\ 
&\leq \int_{B_{\epsilon}} \phi_{\epsilon}(v) \int_{\Rd} \left| u(y) \right| 
\int_{V_{\rho/2} 
\cap \Omega^c \cap E} \bar{P}_{r}(z-y-s)P_\Omega(x,z) \pd{z} \pd{y} \pd{v}. 
	\end{align*}
	Let $R>0$.	Then from boundedness of $\bar{P}_{r}$ and local integrability of $u$ we get
	\begin{align*}
	\int_{|y|\leq R} |u(y)| \int_{V_{\rho/2} \cap \Omega^c \cap E} 
	\bar{P}_{r}(z-y-s)P_\Omega(x,z) \pd{z} \pd{y} &\leq c \int_{|y|\leq c} 
|u(y)| dy \int_{E} P_\Omega(x,z) \pd{z} \\ &\leq c ||u||_{\Y} \int_{E} 
P_\Omega(x,z) \pd{z}.
	\end{align*}

	Furthermore, for $|y|>R$ and sufficiently large $R$, we have $\bar{P}_{r}(z-y-v) \leq c (1\wedge g^*(|y|))$. 
	Thus,
	\begin{align*}
	\int_{|y|>R} |u(y)| \int_{V_{\rho/2}\cap \Omega^c \cap E} 
	\bar{P}_{r}(z-y-s)P_\Omega(x,z) \pd{z}\pd{y} 
	&\leq c||{u}||_{\Y} \int_E P_\Omega(x,z) \pd{z}.
	\end{align*}
	
Since $\int_E P_\Omega(x,z) \pd{z} \to 0$ as measure of $E$ tends to 0, $\phi_{\epsilon} \ast u$ are uniformly integrable in $V_{\rho/2}$ 
with respect to the measure $P_\Omega(x,z)\pd z$. By the Vitali 
convergence theorem,
	
\begin{align*}
\lim_{\epsilon \to 0^+} P_\Omega [ (\phi_{\epsilon} \ast u) \mathbf{1}_{ V_{\rho/2}} ](x) 
= P_\Omega [u \mathbf{1}_{V_{\rho/2}} ](x).
\end{align*}

It remains to show that $\lim_{\epsilon \to 0^+} P_\Omega [ (\phi_{\epsilon} \ast u) \mathbf{1}_{ V_{\rho/2}^c} ]
= P_\Omega [u \mathbf{1}_{V_{\rho/2}^c} ]$. 
Since $\dist{\Omega,V_{\rho/2}^c)}=\rho/2$, we obtain, by \eqref{eq:PoissonUp},  $P_\Omega(x,z) \le c (1 \land g^*(z))$. Due to  $\lim_{\epsilon \to 0^+} 
\phi_{\epsilon} \ast u = u$ in $\Y$ we obtain the result.
\end{proof}

\bibliographystyle{abbrv}
\bibliography{sources/dhkgpuL}

\noindent Tomasz Grzywny\\
Faculty of Pure and Applied Mathematics, Wroc\l{}aw University of Science and Technology, Wyb. Wyspiańskiego 27, 50-370 Wrocław,
Poland\\
\emph{email address}: tomasz.grzywny@pwr.edu.pl\\\\

\noindent Tomasz Jakubowski\\
Faculty of Pure and Applied Mathematics, Wroc\l{}aw University of Science and Technology, Wyb. Wyspiańskiego 27, 50-370 Wrocław,
Poland\\
\emph{email address}: tomasz.jakubowski@pwr.edu.pl\\\\

\noindent Grzegorz \.Zurek\\
Faculty of Pure and Applied Mathematics, Wroc\l{}aw University of Science and Technology, Wyb. Wyspiańskiego 27, 50-370 Wrocław,
Poland\\
\emph{email address}: grzegorz.zurek@pwr.edu.pl

\end{document}